\definecolor{Maroon}{HTML}{ad2231}
\definecolor{webgreen}{HTML}{008000}
\newtheorem{corollary}{Corollary}
\newtheorem{proposition}{Proposition}
\newtheorem{lemma}{Lemma}
\newtheorem{theorem}{Theorem}
\theoremstyle{definition}
\newtheorem*{Assumption*}{Main assumption}
\newtheorem*{general*}{A general remark}
\begin{document}
\title{Invariance principle for fragmentation processes derived from conditioned stable Galton-Watson trees}
\author{Gabriel Berzunza Ojeda\footnote{ {\sc Department of Mathematical Sciences, University of Liverpool, United Kingdom.} E-mail: gabriel.berzunza-ojeda@liverpool.ac.uk }\, \, and \, \, Cecilia Holmgren\footnote{ {\sc Department of Mathematics, Uppsala University, Sweden.} E-mail: cecilia.holmgren@math.uu.se}}

\maketitle

\vspace{0.1in}

\begin{abstract} 
Aldous, Evans and Pitman (1998) studied the behavior of the fragmentation process derived from deleting the edges of a uniform random tree on $n$ labelled vertices. In particular, they showed that, after proper rescaling, the above fragmentation process converges as $n \rightarrow \infty$ to the fragmentation process of the Brownian CRT obtained by cutting-down the Brownian CRT along its skeleton in a Poisson manner.

In this work, we continue the above investigation and study the fragmentation process obtained by deleting randomly chosen edges from a critical Galton-Watson tree $\mathbf{t}_{n}$ conditioned on having $n$ vertices, whose offspring distribution belongs to the domain of attraction of a stable law of index $\alpha \in (1,2]$. Our main results establish that, after rescaling, the fragmentation process of $\mathbf{t}_{n}$ converges as $n \rightarrow \infty$ to the fragmentation process obtained by cutting-down proportional to the length on the skeleton of an $\alpha$-stable L\'evy tree of index $\alpha \in (1,2]$. We further show that the latter can be constructed by considering the partitions of the unit interval induced by the normalized $\alpha$-stable L\'evy excursion with a deterministic drift studied by Miermont (2001). This extends the result of Bertoin (2000) on the fragmentation process of the Brownian CRT. 

\texttt{
The proof of Theorem \ref{Theo3} has been corrected to address a gap in the tightness argument. Specifically, the issue arose from Lemma \ref{lemma5}, which is used in the proof of Theorem \ref{Theo3}. While the proof of convergence of the finite-dimensional distributions in Lemma \ref{lemma5} is correct, the gap lay in the proof of tightness. 
\\
We are grateful to Professor Svante Janson for pointing out this gap, which originated from the application of the unfortunately incorrect \cite[Lemma 22]{Brou2016}. Moreover, Professor Janson also helped us to fill this gap, and the complete proof of the tightness required for Theorem \ref{Theo3} can be found in \cite{GbSvCe2025}.
}
\end{abstract}

\noindent {\sc Key words and phrases}: Additive coalescent, fragmentation, Galton-Watson trees, spectrally positive stable L\'evy processes, stable L\'evy tree, Prim's algorithm.

\noindent {\sc Subject Classes}: 60J25, 60J90, 60F05, 60G52, 60C05.

\section{Introduction and main results} \label{Sec1}

Aldous, Evans and Pitman \cite{AldousPitman1998, EvansPitman1998, Pitman1999} (see also \cite{Brou2016, MarckertWang2019}) considered a fragmentation process of a uniform random tree $\mathbf{t}_{n}$ on $n \in \mathbb{N}$ labelled vertices (or Cayley tree with $n$ vertices) by deleting the edges of $\mathbf{t}_{n}$ one by one in uniform random order. More precisely, as time passes, the deletion of edges creates more and more subtrees of $\mathbf{t}_{n}$ (connected components) such that the evolution of the ranked vector of sizes (number of vertices) of these subtrees (in decreasing order) evolves as a fragmentation process. It turns out that the asymptotic behavior of this fragmentation process, in reverse time, is related to the so-called {\sl standard additive coalescent} \cite{AldousPitman1998, EvansPitman1998}. Moreover, this leads to a continuous representation of the standard additive coalescent in terms of the time-reversal of an analogue fragmentation process of the Brownian continuum random tree (Brownian CRT); see \cite{AldousPitman1998}. Evans and Pitman \cite[Theorem 2]{EvansPitman1998} showed that an additive coalescent is a Feller Markov process with values in the infinite ordered set
\begin{eqnarray} \label{eq21}
\mathbb{S} \coloneqq \Big \{ \mathbf{x} = (x_{1}, x_{2}, \dots): x_{1} \geq x_{2} \geq \cdots \geq 0 \hspace*{2mm} \text{and} \hspace*{2mm} \sum_{i=1}^{\infty} x_{i} < \infty \Big \},
\end{eqnarray}

\noindent endowed with the $\ell^{1}$-norm, $\Vert \mathbf{x} \Vert_{1} = \sum_{i=1}^{\infty} |x_{i}|$ for $\mathbf{x} \in \mathbb{S}$, whose evolution is described formally by: given that the current state is $\mathbf{x}$, two terms $x_{i}$ and $x_{j}$, $i<j$, of $\mathbf{x}$ are chosen and merged into a single term $x_{i} + x_{j}$ (which implies some reordering of the resulting sequence) at rate equal to $x_{i} + x_{j}$. A version of this process defined for times describing the whole real axis is called {\sl eternal}. This model is also closely related to the so-called Marcus-Lushnikov process \cite{Marcus1968, lushnikov1978}, and in particular, the version studied in \cite{AldousPitman1998} is referred to as the standard additive coalescent.

In this work, we shall extend the investigation, that was begun in \cite{AldousPitman1998, EvansPitman1998, Pitman1999}, to the more general situation where one wants to cut-down critical Galton--Watson trees conditioned on having a fixed number of vertices, but whose offspring distribution belongs to the domain of attraction of a stable law. More precisely, consider a critical offspring distribution $\mu = (\mu(k), k \geq 0)$, i.e., a probability distribution on the nonnegative integers satisfying $\sum_{k = 0}^{\infty} k \mu(k) = 1$. In addition, we always implicitly assume that $\mu(0) + \mu(1) < 1$ (or equivalently, $\mu(0) >0$) to avoid degenerate cases, and that $\mu$ is aperiodic. We say that $\mu$ belong to the domain of attraction of a stable law of index $\alpha \in (1,2]$ if either the variance of $\mu$ is finite (in which case $\alpha =2$), or if $\mu([k, \infty)) = k^{-\alpha} L(k)$ as $k \rightarrow \infty$, where $L: \mathbb{R}_{+} \rightarrow \mathbb{R}_{+}$ is a function such that $L(x) >0$ for $x \in \mathbb{R}_{+}$ large enough and $\lim_{x \rightarrow \infty} L(tx)/L(x) = 1$ for all $t >0$ (such a function is called a slowly varying function). In other terms, if $(Y_{i})_{i \geq 1}$ is a sequence of i.i.d.\ random variables with distribution $\mu$, then there exists a sequence of positive real numbers $(B_{n})_{n \geq 1}$ such that 
\begin{eqnarray} \label{eq10}
B_{n} \rightarrow \infty \hspace*{3mm} \text{and} \hspace*{3mm} \frac{Y_{1} + Y_{2} + \cdots + Y_{n} - n}{B_{n}}  \xrightarrow[ ]{d} Z_{\alpha}, \hspace*{3mm} \text{in distribution as} \hspace*{2mm} n \rightarrow \infty,
\end{eqnarray}

\noindent where the Laplace transform of $Z_{\alpha}$ is given by $\mathbb{E}[\exp(-\lambda Z_{\alpha})] = \exp(\lambda^{\alpha})$ whenever $\alpha \in (1,2)$, and $\mathbb{E}[\exp(-\lambda Z_{2})] = \exp(\lambda^{2}/2)$ if $\alpha=2$, for every  $\lambda > 0$ (\cite[Section XVII.5]{Feller} guarantees its existence). In particular, for $\alpha =2$, we have that $Z_{2}$ is distributed as a standard Gaussian random variable. The factor $B_{n}$ is of order $n^{1/\alpha}$ (more precisely, $B_{n}/n^{1/\alpha}$ is a slowly varying sequence), and one may take $B_{n} = \sigma n^{1/2}$ when $\mu$ has finite variance $\sigma^{2}$. 

We henceforth let $\mathbf{t}_{n}$ denote a critical Galton--Watson tree conditioned on having $n$ vertices, whose offspring distribution $\mu$ belongs to the domain of attraction of a stable law of index $\alpha \in (1,2]$. We will refer to $\mathbf{t}_{n}$ as an $\alpha$-stable ${\rm GW}$-tree, for simplicity. Following Aldous, Evans and Pitman \cite{AldousPitman1998, EvansPitman1998}, we are interested in the evolution of the ranked vector of sizes (in decreasing order) of the subtrees created by deleting randomly chosen edges from $\mathbf{t}_{n}$. Indeed, we will consider a continuous-time version of this cutting-down process. Pick an $\alpha$-stable ${\rm GW}$-tree, say $\mathbf{t}_{n}$, and let $\textbf{edge}(\mathbf{t}_{n})$ be its set of edges and equip each of them with i.i.d.\ uniform random weights $\mathbf{w} = (w_{e}: e \in \textbf{edge}(\mathbf{t}_{n}))$ on $[0,1]$. For $u \in [0,1]$, we then keep the edges of $\mathbf{t}_{n}$ with weight smaller than $u$ and discard the others. Therefore, one obtains a (fragmentation) forest $\mathbf{f}_{n}(u)$ conformed by the connected components (or subtrees of $\mathbf{t}_{n}$) created by the above procedure; see Figure \ref{Fig3}. In particular, the forest $\mathbf{f}_{n}(u)$ has the same set of vertices as $\mathbf{t}_{n}$ but clearly it has a different set of edges given by $\textbf{edge}(\mathbf{f}_{n}(u)) = \{ e \in \textbf{edge}(\mathbf{t}_{n}): w_{e} \leq u\}$. Let $\mathbf{F}_{n} = (\mathbf{F}_{n}(u), u \in [0,1])$ be the process given by 
\begin{eqnarray*}
\mathbf{F}_{n}(u) = (F_{n,1}(1-u), F_{n,2}(1-u), \dots), \hspace*{3mm} \text{for} \hspace*{2mm} u \in [0,1], 
\end{eqnarray*}

\noindent the sequence of sizes (number of vertices) of the connected components of the forest $\mathbf{f}_{n}(1-u)$, ranked in decreasing order. We have strategically viewed the sequence of sizes of the components of $\mathbf{f}_{n}(1-u)$ as an infinite sequence, by completing with an infinite number of zero terms. 
Plainly as time passes more and more subtrees are created, and thus, the process $\mathbf{F}_{n}$ evolves as a fragmentation process. Note also that $\mathbf{F}_{n}(0) = (n, 0, 0, \dots)$ and that $\mathbf{F}_{n}(1) = (1, 1, \dots, 1, 0, 0, \dots)$, where the first $n$ terms in $\mathbf{F}_{n}(1)$ are ones and the remaining terms
are zeros. Since we are interested in studying the asymptotic behaviour of $\mathbf{F}_{n}$, we consider the (rescaled in time and space) fragmentation process $\mathbf{F}_{n}^{(\alpha)} = (\mathbf{F}_{n}^{(\alpha)}(t), t \geq 0)$ given by 
\begin{eqnarray} \label{eq15}
\mathbf{F}_{n}^{(\alpha)}(t) = \frac{1}{n} \mathbf{F}_{n}\left( \frac{B_{n}}{n} t \right), \hspace*{2mm} \text{for} \hspace*{2mm} 0 \leq t \leq n/B_{n}, \hspace*{2mm} \text{and} \hspace*{2mm}  \mathbf{F}_{n}^{(\alpha)}(t) = \frac{1}{n} \mathbf{F}_{n}(1) \hspace*{2mm} \text{for} \hspace*{2mm} t > n/B_{n},
\end{eqnarray}

\noindent where $(B_{n})_{n \geq 1}$ is a sequence satisfying (\ref{eq10}). The process $\mathbf{F}_{n}^{(\alpha)}$ takes values on the set $\mathbb{S}$. The aim of this paper is to establish a convergence result for the fragmentation process $\mathbf{F}_{n}^{(\alpha)}$. To state the precise statement (Theorem \ref{Theo3}), it will be convenient to first introduce the limiting object. 

Bertoin \cite{Bertoin2000} showed that the fragmentation process of the Brownian CRT in \cite{AldousPitman1998} can be constructed by considering the partitions of the unit interval induced by a standard Brownian excursion with drift. This latter is sometimes called the {\sl Brownian fragmentation}. In a similar vein, Miermont \cite{Miermont2001} built other fragmentation processes from  L\'evy processes with no positive jumps (or equivalently, negatives of spectrally positive L\'evy processes). Specifically, let $X_{\alpha}^{\rm exc} = (X_{\alpha}^{\rm exc}(s), s \in [0,1]$) be the normalized excursion (with unit length) of an $\alpha$-stable spectrally positive L\'evy process of index $\alpha \in (1,2]$; see Section \ref{Sec2}. In particular, $X_{2}^{\rm exc}$ is the normalized standard Brownian excursion. For every $t \geq 0$, define the processes $Y_{\alpha}^{(t)} = (Y_{\alpha}^{(t)}(s), s \in [0,1])$ and $I_{\alpha}^{(t)} = (I_{\alpha}^{(t)}(s), s \in [0,1])$ by letting
\begin{eqnarray} \label{eq14}
Y_{\alpha}^{(t)}(s) = X_{\alpha}^{\rm exc}(s) - ts \hspace*{2mm} \text{and} \hspace*{2mm} I_{\alpha}^{(t)}(s) = \inf_{u \in [0,s]} Y_{\alpha}^{(t)}(u), \hspace*{4mm} \text{for} \hspace*{2mm} s \in [0,1].
\end{eqnarray} 

\noindent For $t \geq 0$, we introduce 
\begin{eqnarray}
\mathbf{F}^{(\alpha)}(t) = (F^{(\alpha)}_{1}(t), F^{(\alpha)}_{2}(t), \dots)
\end{eqnarray}

\noindent as the random element of $\mathbb{S}$ defined by the ranked sequence (in decreasing order) of the lengths of the intervals components of the complement of the support of the Stieltjes measure ${\rm d} (- I_{\alpha}^{(t)})$; note that $s \mapsto -I_{\alpha}^{(t)}(s) = \sup_{u \in [0,s]} - Y_{\alpha}^{(t)}(u)$ is an increasing process. More precisely, the support of ${\rm d} (- I_{\alpha}^{(t)})$ is defined as the set of times when the process $Y_{\alpha}^{(t)}$ reaches a new infimum. On the other hand, it can be shown that the support of ${\rm d} (- I_{\alpha}^{(t)})$ coincides with the so-called ladder time set of $-Y_{\alpha}^{(t)}$ which is given by the closure of the set of times when $Y_{\alpha}^{(t)}$ is equal to its infimum, i.e.,
\begin{eqnarray*}
\mathscr{L}^{\alpha}(t) \coloneqq \overline{\left\{ s \in [0,1]:  Y_{\alpha}^{(t)}(s) = I_{\alpha}^{(t)}(s) \right\}}; 
\end{eqnarray*}

\noindent see for example \cite[Proposition 1, Chapter VI]{Bertoin1996} and the discussion after that. Then $\mathbf{F}^{(\alpha)}(t)$ is the lengths of the open intervals in the canonical decomposition of $[0,1]\setminus \mathscr{L}^{\alpha}(t)$ arranged in the decreasing order. The intervals components of the complement of the support of the measure ${\rm d} (- I_{\alpha}^{(t)})$ are also called constancy intervals of $-I_{\alpha}^{(t)}$, and in fact, those intervals corresponds to excursion intervals of $Y_{\alpha}^{(t)}$ above its infimum (or equivalently, excursion intervals of the reflected process $Y_{\alpha}^{(t)} - I_{\alpha}^{(t)}$ above $0$). It is well-known that $\mathscr{L}^{\alpha}(t)$ is a.s.\ a random closed set with zero Lebesgue measure which implies that $\mathbf{F}^{(\alpha)}(t) \in \mathbb{S}_{1}$ a.s., where $\mathbb{S}_{1} \subset \mathbb{S}$ is the space of the elements of $\mathbb{S}$ with sum $1$; see \cite[Corollary 5, Chapter VII]{Bertoin1996}. Observe that for every fixed $0 \leq t < t^{\prime}$, the process $s \mapsto Y_{\alpha}^{(t)}(s) - Y_{\alpha}^{(t^{\prime})}(s) = (t^{\prime} -t) s$ is monotone increasing which entails that $\mathscr{L}^{\alpha}(t) \subseteq \mathscr{L}^{\alpha}(t^{\prime})$. Then the partition of $[0,1]$ induced by $\mathscr{L}^{\alpha}(t^{\prime})$ is finer than that induced by $\mathscr{L}^{\alpha}(t)$. As a consequence, it has been shown by Miermont \cite[Proposition 2]{Miermont2001} (see also \cite[Theorem 1]{Bertoin2000} for $\alpha=2$) that $\mathbf{F}^{(\alpha)} = (\mathbf{F}^{(\alpha)}(t), t \geq 0)$ is a fragmentation process issued from $\mathbf{F}^{(\alpha)}(0) = (1,0,0, \dots)$. A precise description of its transition kernel is given in \cite[Definition 4]{Miermont2001}; see Corollary \ref{corollary1} below for some insights. From now on, we will refer to $\mathbf{F}^{(\alpha)}$ as the $\alpha$-stable fragmentation of index $\alpha \in (1,2]$. 

We are now able to state our first main result. Let $\mathbb{D}(I, \mathbb{M})$ be the space of c\`adl\`ag functions from an interval $I \subseteq \mathbb{R}$ to the separable, complete metric space $(\mathbb{M}, d)$ equipped with the Skorohod topology; (see e.g.\  \cite[Chapter 3]{Billi1999} or \cite[Chapter VI]{jacod2003} for details on this space). We write $\xrightarrow[ ]{d}$ to denote convergence in distribution.
\begin{theorem} \label{Theo3}
Let $\mathbf{t}_{n}$ be an $\alpha$-stable ${\rm GW}$-tree of index $\alpha \in (1,2]$. Then, we have that
\begin{eqnarray*}
(\mathbf{F}_{n}^{(\alpha)}(t), t \geq 0) \xrightarrow[ ]{d} (\mathbf{F}^{(\alpha)}(t), t \geq 0), \hspace*{3mm} \text{as} \hspace*{2mm}  n \rightarrow \infty, \hspace*{2mm} \text{in the space} \hspace*{2mm} \mathbb{D}(\mathbb{R}_{+}, \mathbb{S}).
\end{eqnarray*}
\end{theorem}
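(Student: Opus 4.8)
The plan is to connect the discrete fragmentation forest $\mathbf{f}_{n}(1-u)$ to a coding by the contour/Łukasiewicz path of $\mathbf{t}_{n}$, and then invoke the invariance principle for conditioned stable Galton-Watson trees together with a continuity argument. The first step is to find a deterministic functional $\Phi$ such that, if $\mathsf{W}_{n}$ denotes (a suitably normalised version of) the Łukasiewicz walk of $\mathbf{t}_{n}$ together with its edge-weights $\mathbf{w}$, then $\mathbf{F}_{n}^{(\alpha)}(t) = \Phi\bigl(\mathsf{W}_{n}, t\bigr)$ up to negligible error; the point is that deleting an edge of weight exceeding $1-u$ corresponds, in Prim's-algorithm order on the vertices, to a ``record'' of a perturbed walk, so the component sizes are the lengths of the excursions of the perturbed walk above its running infimum, exactly as in the definition of $\mathbf{F}^{(\alpha)}(t)$. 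This is the combinatorial heart of the matter and is presumably where Prim's algorithm, advertised in the keywords, enters: one orders the vertices of $\mathbf{t}_{n}$ so that the edge-weight process, read in that order, becomes an i.i.d.-increment walk whose running minima mark the cut edges.

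The second step is the probabilistic input: by the invariance principle of Duquesne (see the discussion following \eqref{eq10}), the rescaled Łukasiewicz path $\bigl(B_{n}^{-1}W_{n}(\lfloor ns\rfloor), s\in[0,1]\bigr)$ of the $\alpha$-stable ${\rm GW}$-tree $\mathbf{t}_{n}$ converges in distribution to the normalised $\alpha$-stable excursion $X_{\alpha}^{\mathrm{exc}}$ in $\mathbb{D}([0,1],\mathbb{R})$. I would promote this to a joint convergence of the pair (rescaled walk, empirical edge-weight data) to the pair $\bigl(X_{\alpha}^{\mathrm{exc}}, \text{Lebesgue}\bigr)$; since the weights are i.i.d.\ uniforms independent of the tree, a law-of-large-numbers / Glivenko–Cantelli estimate handles the weight component, and the time-rescaling $B_{n}/n$ in \eqref{eq15} is precisely chosen so that the drift $ts$ of $Y_{\alpha}^{(t)}$ in \eqref{eq14} emerges as the fluid limit of the partial sums of $\{w_{e}>1-u\}$ with $u = (B_{n}/n)t$. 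Thus $\mathsf{W}_{n}$, read in Prim order with the cut-marks, converges jointly to $Y_{\alpha}^{(\cdot)}$ as a process in $t$.

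The third step is a continuous-mapping argument: the map sending a càdlàg excursion-type path to the ranked sequence of lengths of the constancy intervals of its running-infimum process is continuous at $X_{\alpha}^{\mathrm{exc}}$ (for each fixed $t$, a.s.), because a.s.\ $X_{\alpha}^{\mathrm{exc}}$ does not attain a strict local infimum simultaneously at an accumulation of jump times and the set $\mathscr{L}^{\alpha}(t)$ has zero Lebesgue measure and empty interior; one also needs joint-in-$t$ continuity, which follows from the monotonicity $\mathscr{L}^{\alpha}(t)\subseteq\mathscr{L}^{\alpha}(t')$ noted in the excerpt and right-continuity in $t$. Combining this with the convergence from step two, via the Skorohod representation theorem, yields convergence of finite-dimensional distributions of $\mathbf{F}_{n}^{(\alpha)}$; tightness in $\mathbb{D}(\mathbb{R}_{+},\mathbb{S})$ then upgrades this to the functional statement, using that both $\mathbf{F}_{n}^{(\alpha)}$ and $\mathbf{F}^{(\alpha)}$ are monotone (fragmentation) processes with values in $\mathbb{S}_{1}$, which makes tightness essentially automatic (modulus-of-continuity estimates reduce to controlling the largest fragment, equivalently the longest excursion of $Y_{\alpha}^{(t)}$, which is continuous and a.s.\ finite).

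The main obstacle I anticipate is the first step: proving rigorously that the discrete cutting procedure is encoded by running infima of an explicit perturbed walk, with an error vanishing after rescaling. Two subtleties lurk here — first, the ordering of vertices must be chosen carefully (Prim's algorithm driven by the edge-weights) so that the ``cut marks'' become a well-behaved additive functional of a random walk bridge conditioned to stay positive; second, one must show that the discrepancy between ``number of cut edges so far'' (a sum of indicators over a growing set of edges) and its deterministic fluid limit $ts$ is $o(B_{n})$ uniformly over $s\in[0,1]$ and locally uniformly over $t$, which is a uniform LLN that requires care because the set of edges considered and their weights are correlated through the tree structure. Once this discrete-to-continuous dictionary is in place, steps two and three are comparatively routine applications of Duquesne's invariance principle and continuous-mapping / tightness arguments.
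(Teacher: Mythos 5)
Your proposal follows essentially the same route as the paper: Prim's algorithm is used to order the vertices so that the fragmentation forest at each time is coded by a perturbed walk whose excursions above its running infimum give the component sizes (the paper's Lemma \ref{lemma4} and Proposition \ref{Pro2}), this walk is shown to converge jointly in $t$ to $Y_{\alpha}^{(\cdot)}$ by combining the Łukasiewicz-path invariance principle with a Chebyshev/maximal-inequality control of the centred weight indicators (Theorems \ref{Theo1} and \ref{Theo2}), and the conclusion follows from a continuity lemma for the map to ranked constancy-interval lengths (the paper's Lemma \ref{lemma5}, built on Bertoin's Lemma 4) together with a tightness argument that, exactly as you anticipate, reduces by monotonicity in $t$ to controlling the mass of the first $m$ fragments at the left endpoint of each compact time interval. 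The subtleties you flag (the uniform negligibility of the centred cut-mark sums, and the a.s.\ regularity of $Y_{\alpha}^{(t)}$ at the endpoints of its constancy intervals) are precisely the points the paper addresses in Lemma \ref{lemma3} and in verifying conditions (ii)--(iii) of Lemma \ref{lemma5} via the Vervaat construction and exchangeability of increments.
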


As mentioned earlier, $\mathbf{F}^{(2)}$ is exactly the Brownian fragmentation studied by Bertoin \cite{Bertoin2000}, that is to say, it corresponds to the fragmentation process derived from the Brownian CRT of Aldous and Pitman \cite{AldousPitman1998}; see also \cite{Abraham2002f}. In view of this, the second goal of this paper is to show that indeed $\mathbf{F}^{(\alpha)}$ is the fragmentation process obtained by cutting-down the ``edges'' of the $\alpha$-stable L\'evy tree.

It is well-known that the $\alpha$-stable L\'evy tree of index $\alpha \in (1,2]$ appears as scaling limits of $\alpha$-stable ${\rm GW}$-trees; see Duquesne and Le Gall \cite{DuLegall2002}. The $\alpha$-stable L\'evy tree $\mathcal{T}_{\alpha} = (\mathcal{T}_{\alpha}, d_{\alpha}, \rho_{\alpha})$ is a random compact metric space $(\mathcal{T}_{\alpha}, d_{\alpha})$ with one distinguished element $\rho \in \mathcal{T}_{\alpha}$ called the root such that $(\mathcal{T}_{\alpha}, d_{\alpha})$ is a tree-like space in that for $v,w \in \mathcal{T}_{\alpha}$, there is a unique non-self-crossing path $[v, w]$ from $v$ to $w$ in $\mathcal{T}_{\alpha}$, whose length equals $d_{\alpha}(v,w)$. The leaves ${\rm Lf}(\mathcal{T}_{\alpha})$ of $\mathcal{T}_{\alpha}$ are those points that do not belong to the interior of any path leading from one point to another, and the skeleton of the tree is the set ${\rm Sk}(\mathcal{T}_{\alpha}) = \mathcal{T}_{\alpha} \setminus {\rm Lf}(\mathcal{T}_{\alpha})$ of non-leaf points. The $\alpha$-stable L\'evy tree $\mathcal{T}_{\alpha}$ is naturally endowed with a uniform probability measure $\mu_{\alpha}$ ({\sl the mass measure}) that is supported on ${\rm Lf}(\mathcal{T}_{\alpha})$, and a unique $\sigma$-finite measure $\lambda_{\alpha}$ ({\sl the length measure}) carried by ${\rm Sk}(\mathcal{T}_{\alpha})$ that assigns measure $d_{\alpha}(v,w)$ to the geodesic path between $v$ and $w$ in $\mathcal{T}_{\alpha}$. 

Following Aldous-Pitman's fragmentation \cite{AldousPitman1998} of the Brownian CRT, the analogue of deleting randomly chosen edges in $\mathbf{t}_{n}$ is to cut the skeleton of $\mathcal{T}_{\alpha}$ by a Poisson point process of cuts with intensity ${\rm d}t \otimes \lambda_{\alpha}({\rm d}v)$ on $[0, \infty) \times \mathcal{T}_{\alpha}$. For all $t \geq 0$, define an equivalence relation $\sim_{t}$ on $\mathcal{T}_{\alpha}$ by saying that $v \sim_{t} w$, for $v, w \in \mathcal{T}_{\alpha}$, if and only if, no atom of the Poisson process that has appeared before time $t$ belongs to the path $[v, w]$. These cuts split the $\alpha$-stable L\'evy tree into a (continuum) forest, that is a countably infinite set of smaller subtrees (connected components) of $\mathcal{T}_{\alpha}$. Let $\mathcal{T}_{\alpha,1}^{(t)}, \mathcal{T}_{\alpha,2}^{(t)}, \dots$ be the distinct equivalence classes for $\sim_{t}$ (connected components of $\mathcal{T}_{\alpha}$), ranked according to the decreasing order of their $\mu_{\alpha}$-masses. The subtrees $(\mathcal{T}_{\alpha,i}^{(t)}, i \geq 1)$ are nested as $t$ varies, that is, for every $0 \leq t < t^{\prime}$ and $i \geq 1$, there exits $j \geq 1$ such that $\mathcal{T}_{\alpha,i}^{(t^{\prime})} \subset \mathcal{T}_{\alpha,j}^{(t)}$. Let $\mathbf{F}_{\mathcal{T}_{\alpha}} = (\mathbf{F}_{\mathcal{T}_{\alpha}}(t), t \geq 0)$ be the process given by
\begin{eqnarray*}
\mathbf{F}_{\mathcal{T}_{\alpha}}(t) = (\mu_{\alpha}(\mathcal{T}_{\alpha,1}^{(t)}),  \mu_{\alpha}(\mathcal{T}_{\alpha,2}^{(t)}), \dots), \hspace*{4mm} t \geq 0,
\end{eqnarray*}

\noindent where $\mathbf{F}_{\mathcal{T}_{\alpha}}(0) = (1, 0, 0, \dots)$. Indeed, $\mathbf{F}_{\mathcal{T}_{\alpha}}$ is a fragmentation process in the sense that $\mathbf{F}_{\mathcal{T}_{\alpha}}(t^{\prime})$ is obtained by splitting at random the elements of $\mathbf{F}_{\mathcal{T}_{\alpha}}(t)$, for $0 \leq t < t^{\prime}$. We call $\mathbf{F}_{\mathcal{T}_{\alpha}}$ the fragmentation process of the $\alpha$-stable L\'evy tree. In particular, $\mathbf{F}_{\mathcal{T}_{2}}$ is the fragmentation process of the Brownian CRT introduced in \cite[Section 2.2]{AldousPitman1998}. Note that $\mathbf{F}_{\mathcal{T}_{\alpha}}$ takes values in $\mathbb{S}$, and that Lemma \ref{lemma6} below shows that $\mathbf{F}_{\mathcal{T}_{\alpha}}(t) \in \mathbb{S}_{1}$ a.s., for every $t \geq 0$. We can now state our second main result.
\begin{proposition} \label{Theo4}
We have that 
\begin{eqnarray*}
(\mathbf{F}^{(\alpha)}(t), t \geq 0) \stackrel{d}{=} (\mathbf{F}_{\mathcal{T}_{\alpha}}(t), t \geq 0),
\end{eqnarray*}
\noindent where $\stackrel{d}{=}$ means equality of finite-dimensional distributions. 
\end{proposition}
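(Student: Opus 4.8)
The plan is to show that both processes arise from the same coding of an $\alpha$-stable L\'evy tree by its height (or contour/excursion) function, so that equality in distribution follows by matching the two constructions pathwise on a common probability space. The key structural fact is that the $\alpha$-stable L\'evy tree $\mathcal{T}_\alpha$ is, by its very definition (Duquesne--Le Gall \cite{DuLegall2002}), the real tree coded by the normalized excursion $X_\alpha^{\mathrm{exc}}$: the height function $H_\alpha$ associated to $X_\alpha^{\mathrm{exc}}$ induces a pseudo-metric on $[0,1]$, and $\mathcal{T}_\alpha$ is the quotient, with $\mu_\alpha$ the pushforward of Lebesgue measure on $[0,1]$. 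I would work with this coding throughout. Under it, a Poisson point process of cuts on the skeleton with intensity $\mathrm{d}t\otimes\lambda_\alpha$ can be transported to a point process on $[0,\infty)$ times the ``jump set'' of $X_\alpha^{\mathrm{exc}}$ (together with the branch points), because the length measure $\lambda_\alpha$ on the skeleton is precisely encoded by the jumps of the L\'evy excursion: a cut at height-level corresponds to selecting one of the excursion intervals straddling a jump and cutting at an independent uniform position along the corresponding branch.

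The main steps, in order, are as follows. First, I would recall the coding of $\mathcal{T}_\alpha$ by $X_\alpha^{\mathrm{exc}}$ and the identification of $\lambda_\alpha$ in terms of the jumps $\Delta X_\alpha^{\mathrm{exc}}(s)$ (each jump of size $\Delta$ corresponds to a branch point whose subtree structure distributes mass, and the relevant portion of skeleton above it has a length governed by $\Delta$). Second, I would describe how a single Poisson cut on the skeleton acts on the interval $[0,1]$: removing an atom from the skeleton at a point lying on the branch attached to a jump at time $s_0$ disconnects a sub-excursion interval of $X_\alpha^{\mathrm{exc}}$, and this is exactly the operation of introducing a new infimum of the drifted process $Y_\alpha^{(t)}$ at a suitable location. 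Third, and this is the crux, I would show that running the Poisson process of cuts up to time $t$ produces precisely the partition of $[0,1]$ by the constancy intervals of $-I_\alpha^{(t)}$, i.e.\ $[0,1]\setminus\mathscr{L}^\alpha(t)$. The heuristic is the classical one from Bertoin \cite{Bertoin2000} and Abraham--Delmas: adding the linear drift $-ts$ to the excursion forces new infima to appear, and the rate at which a given excursion interval above the current infimum gets broken, at a given interior point, matches the $\mathrm{d}t\otimes\lambda_\alpha$ intensity of cuts falling on the corresponding piece of skeleton. Fourth, once the partitions agree for each fixed $t$, the masses agree: $\mu_\alpha(\mathcal{T}_{\alpha,i}^{(t)})$ is the Lebesgue measure of the corresponding excursion interval, which is the $i$-th largest entry of $\mathbf{F}^{(\alpha)}(t)$. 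Fifth, I would lift this from fixed $t$ to finite-dimensional distributions using the monotonicity/nestedness already recorded in the excerpt ($\mathscr{L}^\alpha(t)\subseteq\mathscr{L}^\alpha(t')$ and the nesting of the $\mathcal{T}_{\alpha,i}^{(t)}$), together with the fact that both families of partitions are built from a single source of randomness (the excursion plus, on the tree side, the Poisson cuts, which are coupled to the uniform cut-positions induced by the drift construction).

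The step I expect to be the main obstacle is the third one: rigorously matching the intensity of the Poisson cuts on the skeleton with the appearance of new infima of $Y_\alpha^{(t)}$. For $\alpha=2$ this is the content of Bertoin's theorem and relies on the Vervaat-type / Bismut-type decompositions of the Brownian excursion and on the fact that the CRT skeleton has a transparent description via the excursion's local times; for $\alpha\in(1,2)$ one must instead use the jump structure of the spectrally positive stable excursion and the Duquesne--Le Gall description of the L\'evy tree's branch points and skeleton, and control how the length measure $\lambda_\alpha$ distributes along the (dense) skeleton. Concretely, I would either (i) invoke the excursion theory for $-Y_\alpha^{(t)}$ reflected at its infimum, computing the rate of creation of new excursion intervals inside a given interval as a function of the drift $t$ and identifying it with $\lambda_\alpha$ on the corresponding subtree, or (ii) pass to the discrete picture: realize both sides as limits of the cutting procedure on $\mathbf{t}_n$ — on one hand via Theorem \ref{Theo3} (whose proof presumably already encodes the convergence of the discrete fragmentation to $\mathbf{F}^{(\alpha)}$ through the Łukasiewicz path / Prim's algorithm coding mentioned in the keywords), and on the other hand via the known convergence of the Aldous--Pitman-type discrete cutting to $\mathbf{F}_{\mathcal{T}_\alpha}$ (Dieuleveut, Abraham--Delmas--Hoscheit for L\'evy trees), so that uniqueness of the limit forces $\mathbf{F}^{(\alpha)}\stackrel{d}{=}\mathbf{F}_{\mathcal{T}_\alpha}$. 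Approach (ii) is the cleaner route given what is already available in the excerpt, and I would carry it out as the primary argument, using (i) only to make the continuum picture self-contained; the remaining routine verifications (that the discrete cut weights rescale to the correct Poisson intensity on $\lambda_\alpha$, and that the $\ell^1$-convergence upgrades finite-dimensional convergence to the claimed identity in distribution) are then standard.
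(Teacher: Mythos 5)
Your primary argument (your ``approach (ii)'') is exactly the paper's: both $(\mathbf{F}^{(\alpha)}(t),\,t\ge 0)$ and $(\mathbf{F}_{\mathcal{T}_\alpha}(t),\,t\ge 0)$ are identified as limits of the rescaled discrete cutting of $\mathbf{t}_n$ --- the first via Theorem \ref{Theo3}, the second by running the Aldous--Pitman argument (sampling i.i.d.\ leaves, showing the induced exchangeable partitions of $\mathbb{N}$ converge restriction by restriction using the convergence of reduced subtrees and of the Bernoulli cuts to the Poisson cuts, then upgrading pointwise convergence in $\mathbb{S}_{\leq 1}$ to $\ell^1$ via properness of the limit partition) --- so the identity follows by uniqueness of the limit. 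The direct continuum matching you sketch at length (your ``approach (i)'') is not what the paper does and is not needed; since you explicitly designate (ii) as the primary route, your proof is essentially the paper's.
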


Theorem 3 in \cite{AldousPitman1998} shows that the time-reversed fragmentation process of the Brownian CRT, i.e.\ $(\mathbf{F}_{\mathcal{T}_{2}}(e^{-t}), t \in \mathbb{R})$, is a version of the standard additive coalescent providing an explicit construction of this last process. In general, Miermont \cite[Section 6]{Miermont2001} has shown that the time-reversed $\alpha$-stable fragmentation  process, i.e.\ $(\mathbf{F}^{(\alpha)}(e^{-t}), t \in \mathbb{R})$, is an eternal additive coalescent as described by Evans and Pitman \cite{EvansPitman1998}. More precisely, it is a mixing of so-called {\sl extremal coalescents} of Aldous and Pitman \cite{AldousPitmanI2000} (see also \cite{Bertoin2001}) which exact law is given in \cite[Proposition 3]{Miermont2001}. Thus, Proposition \ref{Theo4} implies that this eternal additive coalescent can also be constructed from the $\alpha$-stable L\'evy tree by Poisson splitting along its skeleton. On the other hand, Theorem \ref{Theo3} and Proposition \ref{Theo4} clearly generalize Bertoin's work \cite{Bertoin2000} and moreover, complete Miermont's \cite{Miermont2001} one by identifying the distribution of the $\alpha$-stable fragmentation with that of the fragmentation process of the $\alpha$-stable L\'evy tree. In particular, Bertoin \cite{BertoinS2002} proved that $\mathbf{F}^{(2)}$ (or equivalently, $\mathbf{F}_{\mathcal{T}_{2}}$) is a so-called {\sl self-simlar fragmentation process} of index $1/2$. However, Miermont \cite{MiermontII} has already pointed out that $\mathbf{F}^{(\alpha)}$ (and therefore $\mathbf{F}_{\mathcal{T}_{\alpha}}$), for $\alpha \in (1,2)$, is not a self-similar fragmentation due to the existence of points in $\mathcal{T}_{\alpha}$ with infinite degree. \\

The proof of Theorem \ref{Theo3} uses some of the ideas developed in \cite{Brou2016} where only the case of the Cayley tree was treated. However, in our more general framework, there are technical challenges that do not appear in \cite{Brou2016}, mostly due to the lack of some properties that only the Cayley tree satisfies. To prove Theorem \ref{Theo3}, we use the so-called {\sl Prim's algorithm} \cite{Prim} to obtain a consistent ordering on the vertices of the forest created by deleting randomly chosen edges from $\mathbf{t}_{n}$ that we refer to as the {\sl Prim order}. Informally, given $\mathbf{t}_{n}$ whose edges are equipped with non-negative and distinct weights, and a starting vertex, say $v$ of $\mathbf{t}_{n}$, Prim's algorithm explores a connected component from $v$, each time visiting a neighbouring vertex whose connecting edge possesses the smallest weight; see Section \ref{Sec3}. Then every time an edge is removed and a new connected component is created, the Prim order of the vertices in the new forest always remains the same. This will allow us to precisely encode this forest (and in particular, the sizes of connected components) using a discrete analogue of the process $Y^{(t)}_{\alpha}$ defined in (\ref{eq14}) that we refer to as the {\sl Prim path}. We then show that this (properly rescaled) Prim path indeed converges to its continuous version. Finally, we develop a general approach for the convergence of fragmentation processes encoded by functions in $\mathbb{D}([0,1], \mathbb{R})$ to conclude our proof. 

There are some of the key differences with the proof for Cayley trees in \cite{Brou2016}. For example, the convergence of the encoding processes in \cite{Brou2016} uses a bound (see in (10) in \cite{Brou2016}) that is only known to hold for Cayley trees (or Galton--Watson trees where $\mu$ has some exponential finite moment). In \cite{Brou2016}, the authors mostly work with convergence of continuous processes. This is no longer possible in our framework, since our encoding processes are discontinuous due to the nature of the $\alpha$-stable ${\rm GW}$-trees. The above makes an important difference at the technical level. 

The proof of Proposition \ref{Theo4} follows along the lines of that of Theorem 3 in \cite{AldousPitman1998} for the Brownian CRT (see also the proof of Proposition 13 in \cite{AldousPitmanI2000}). Informally, we use the convergence of rescaled $\alpha$-stable ${\rm GW}$-trees toward the $\alpha$-stable L\'evy tree $\mathcal{T}_{\alpha}$ in order to approximate the fragmentation process of  $\mathcal{T}_{\alpha}$.  

The rest of the paper is organized as follows. In Section \ref{SecModels}, we discuss some connections with some combinatorial and probabilistic models: additive coalescents, parking schemes, laminations and Bernoulli bond-percolation. In Section \ref{Sec2}, we recall some facts about stable L\'evy processes, bridges and excursions that will be important for our proofs. Section \ref{Sec3} is devoted to the introduction of Galton--Watson trees as well as the formal definition of the exploration process (the Prim path) associated with the fragmentation forest. The asymptotic behavior of the Prim path is studied in Section \ref{Sec4}. Finally, the proofs of Theorem \ref{Theo3} and Proposition \ref{Theo4} are given in Section \ref{Sec7} and Section \ref{Sec6}, respectively.

\section{Further remarks} \label{SecModels}
In this section, we comment on our main results and highlight some connections with previous works. \\

\noindent \textbf{Additive coalescents.} The Cayley tree of size $n$ can be viewed as a Galton--Watson tree with Poissonian offspring distribution of parameter $1$ and conditioned to have $n$ vertices, where the labels are assigned to the vertices uniformly at random. In particular, the fragmentation process studied in \cite{AldousPitman1998, EvansPitman1998, Pitman1999}, say $\mathbf{F}_{n}^{+} = (\mathbf{F}_{n}^{+}(t), t \geq 0)$, corresponds to $\mathbf{F}_{n}^{(\alpha)}$ in (\ref{eq15}), with $\alpha =2$ and $B_{n} = n^{1/2}$. The fragmentation process $\mathbf{F}_{n}^{+}$ leads to a representation of an additive coalescent by an appropriate time reversal, that is, the exponential time-change $t \rightarrow e^{-t}$. Specifically, $(\mathbf{F}_{n}^{+}(e^{-t}), t \geq -(1/2) \ln n)$ is an additive coalescent starting at time $-(1/2) \ln n$ from the state $(1/n, 1/n, \dots, 1/n, 0,0, \dots) \in \mathbb{S}$ (or equivalently, from the component sizes in Marcus-Lushnikov model with $n$ initial masses $1/n$). Evans and Pitman \cite{EvansPitman1998} (see also \cite[Proposition 2]{AldousPitman1998}) showed that this time-reversed version of $\mathbf{F}_{n}^{+}$ converges in distribution to the standard additive coalescent, i.e., $(\mathbf{F}_{\mathcal{T}_{2}}(e^{-t}), t \in \mathbb{R})$. 

Aldous and Pitman \cite{AldousPitmanI2000} (see also \cite[Construction 5]{EvansPitman1998}) also studied the fragmentation process derived by cutting-down {\sl birthday trees}. They are a family of trees that generalizes the Cayley tree in allowing ``weights'' on the vertices. Aldous and Pitman showed that this fragmentation process, suitable rescaled, converges to the fragmentation process associated of the continuum counterpart of birthday trees, the {\sl inhomogeneous continuum random trees} (ICRT). Moreover, the time-reversed version of the fragmentation process of the ICRT can be viewed as version of an eternal additive coalescent. On the other hand, Bertoin \cite{Bertoin2001} has proved that the fragmentation process of the ICRT can also be constructed by considering the partitions of the unit interval induced by certain bridges with exchangeable increments.\\

\noindent \textbf{Parking schemes.} Chassaing and Louchard \cite{Chasing2002} have provided yet another representation of the standard additive coalescent as parking schemes related to {\sl Knuth’s parking problem}; see also \cite{ChassainSv2001, MarckertWang2019}. Bertoin and Miermont \cite{BertoinMiermont2006C} extended the work \cite{Chasing2002} and relate Knuth's parking problem for caravans to different versions of eternal additive coalescent. On the other hand, Knuth's parking problem bears some similarities with the dynamics of an aggregating server studied by Bertoin \cite{Bertoin2001} that also relate to the additive coalescent. \\

\noindent \textbf{Lamination process.} In a recent work, Th\'evenin \cite{thvenin2019geometric} has provided a geometric representation of the fragmentation process $\mathbf{F}_{\mathcal{T}_{\alpha}}$ by a new lamination-valued process. In particular, Theorem 1.1 in \cite{thvenin2019geometric} combined with Proposition \ref{Theo3} allows to deduce the exact distribution of the ranked sequence (in decreasing order) of the masses of the faces of this lamination-valued process. \\

\noindent \textbf{Bernoulli bond-percolation.} Bernoulli bond-percolation on finite connected graphs is perhaps the simplest example of a percolation model. In this model, each edge in the connected graph is removed with probability $1-p \in (0,1)$, and it is kept with probability $p$, independently of the other edges. This induces a partition of the set of vertices of the graph into connected components usually  referred to as clusters. It should be intuitively clear that there is a link between Bernoulli bond-percolation on $\alpha$-stable ${\rm GW}$-trees and their associated fragmentation processes. More precisely, let $\mathbf{t}_{n}$ be an $\alpha$-stable ${\rm GW}$-tree. For $u \in [0,1]$, recall that continuous-time cutting-down procedure of $\mathbf{t}_{n}$ described in the introduction results in a random forest of connected components. Indeed, the probability that a given edge of $\mathbf{t}_{n}$ has not yet been removed at time $u$ is exactly $u$. Thus, the configuration of the connected components at time $u$ is precisely that resulting from Bernoulli bond-percolation on $\mathbf{t}_{n}$ with parameter $u$. A natural problem in this setting is then to investigate the asymptotic behavior of the sizes (number of vertices) of the largest clusters for appropriate percolation regimes. In this direction, let $(B_{n})_{n \geq 1}$ be a sequence of positive real numbers satisfying (\ref{eq10}). An application of Theorem \ref{Theo3} shows that for the percolation parameter $1-(B_{n}/n)t$ with a fixed $t \geq 0$, the sequence of sizes of the clusters ranked in decreasing order and renormalized by a factor of $1/n$ (i.e.\ $\mathbf{F}_{n}^{(\alpha)}(t)$) converges in distribution, as $n \rightarrow \infty$, to $\mathbf{F}^{(\alpha)}(t)$. In particular, Theorem 2 in \cite{Miermont2001} allows us to describe explicitly the distribution of $\mathbf{F}^{(\alpha)}$ at fixed times. Let $(p_{s}(z), z \in \mathbb{R}, s \geq 0)$ be the family of densities of the distribution of a strictly stable spectrally positive L\'evy process with index $\alpha \in (1, 2]$; see Section \ref{Sec2}. 
\begin{corollary} \label{corollary1}
For $t >0$, let ${\rm a}_{1}^{(\alpha)}(t) > {\rm a}_{2}^{(\alpha)}(t)  > \cdots$ be the atoms of a Poisson measure on $(0, \infty)$ with intensity $\Lambda_{\alpha}^{(t)}({\rm d} z) \coloneqq  z^{-1}p_{z}(-tz)  \mathds{1}_{\{ z >0 \}} {\rm d} z$, ranked in decreasing order. Then
\begin{eqnarray*}
\mathbf{F}^{(\alpha)}(t) \stackrel{d}{=}  \Big( ({\rm a}_{1}^{(\alpha)}(t), {\rm a}_{2}^{(\alpha)}(t), \dots) \, \bigg | \sum_{i=1}^{\infty} {\rm a}_{i}^{(\alpha)}(t) = 1 \Big).
\end{eqnarray*} 
\end{corollary}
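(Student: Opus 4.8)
The plan is to obtain Corollary~\ref{corollary1} from the explicit description of the one--dimensional marginals of the $\alpha$--stable fragmentation established by Miermont \cite[Theorem~2]{Miermont2001}, and to re-express that description in the Poissonian form stated here. Recall that $\mathbf{F}^{(\alpha)}(t)$ is, by definition, the ranked sequence of lengths of the excursion intervals of $Y^{(t)}_{\alpha}(s)=X^{\rm exc}_{\alpha}(s)-ts$ above its running infimum $I^{(t)}_{\alpha}$, i.e.\ of the components of $[0,1]\setminus\mathscr{L}^{\alpha}(t)$. The first step is to pass from the normalized excursion $X^{\rm exc}_{\alpha}$ to the bridge $X^{\rm br}_{\alpha}$ of the $\alpha$--stable spectrally positive L\'evy process over $[0,1]$, via Vervaat's transformation: $X^{\rm exc}_{\alpha}$ is the cyclic shift of $X^{\rm br}_{\alpha}$ at its (a.s.\ unique) overall minimum. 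Since adding the deterministic slope $s\mapsto-ts$ and then recording the \emph{multiset} of lengths of the excursion intervals above the running infimum is invariant under cyclic shifts of a path with equal endpoints, $\mathbf{F}^{(\alpha)}(t)$ has the same law as the corresponding ranked vector built from $s\mapsto X^{\rm br}_{\alpha}(s)-ts$. One point to be checked here is precisely this cyclic--shift invariance of the interval partition of the complement of the ladder set: read on the circle $\mathbb{R}/\mathbb{Z}$ this partition does not depend on the base point, and a.s.\ no excursion interval straddles it.

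Next, the process $s\mapsto X^{\rm br}_{\alpha}(s)-ts$ has exchangeable increments, and its running infimum increases continuously because $X^{\rm br}_{\alpha}$ has no negative jumps. By the theory of bridges with exchangeable increments (Kallenberg; see also Bertoin \cite{Bertoin2001} and Miermont \cite{Miermont2001}), the lengths of the excursion intervals above the infimum of such a bridge form an exchangeable interval partition of $[0,1]$ whose law is that of the ranked atoms of a Poisson point process on $(0,\infty)$ conditioned to have total mass $1$, the intensity of that Poisson process being the image under ``excursion length'' of the It\^o excursion measure of the unconditioned process $X^{(t)}(s):=X_{\alpha}(s)-ts$ reflected at its infimum, equivalently the L\'evy measure of the first--passage subordinator of $X^{(t)}$. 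The key computation is to identify this length intensity with $\Lambda^{(t)}_{\alpha}({\rm d}z)=z^{-1}p_{z}(-tz)\mathds{1}_{\{z>0\}}{\rm d}z$: since $X^{(t)}$ has no positive jumps, a Kendall--type identity from the fluctuation theory of spectrally one--sided L\'evy processes (see e.g.\ \cite{Bertoin1996}), together with a time reversal of the It\^o excursion at its minimum, expresses this L\'evy measure through the time--$z$ density of $X^{(t)}$, and keeping track of signs and of the normalization of local time yields the factor $z^{-1}p_{z}(-tz)$. As a consistency check, for $\alpha=2$ one has $p_{z}(-tz)=(2\pi z)^{-1/2}e^{-t^{2}z/2}$, so $\Lambda^{(t)}_{2}({\rm d}z)\propto z^{-3/2}e^{-t^{2}z/2}{\rm d}z$, recovering the classical description of the Aldous--Pitman fragmentation of the Brownian CRT.

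Combining the two steps, $\mathbf{F}^{(\alpha)}(t)$ is the ranked sequence of atoms of a Poisson measure on $(0,\infty)$ with intensity $\Lambda^{(t)}_{\alpha}$ conditioned on the (null) event that the atoms sum to $1$; this conditioning must be realized through a regular version, and the required non--degeneracy --- that $\sum_{i}{\rm a}^{(\alpha)}_{i}(t)$ has a continuous, strictly positive density near $1$ --- follows from the positivity of the stable density $p_{1}$, or alternatively from the absolute continuity of $X^{\rm br}_{\alpha}$ with respect to $X_{\alpha}$ on $[0,1-\varepsilon]$ together with a local limit theorem at the right endpoint. Any multiplicative constant in $\Lambda^{(t)}_{\alpha}$ is then irrelevant because it cancels under the conditioning, so one may take exactly $z^{-1}p_{z}(-tz)\mathds{1}_{\{z>0\}}{\rm d}z$, which is the claim.

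I expect the main obstacle to be the intensity identification in the second paragraph --- extracting the precise form $z^{-1}p_{z}(-tz)$, with the right constant, of the excursion--length measure of $X^{(t)}$ reflected at its infimum, through the time reversal at the minimum and the fluctuation theory of spectrally positive stable processes with a linear drift; this is the content that Miermont's Theorem~2 supplies. A secondary technical point is the cyclic--shift invariance used to go from the bridge to the excursion. As an alternative route that avoids invoking Miermont's theorem, one could argue from Theorem~\ref{Theo3}: under the percolation interpretation of Section~\ref{SecModels}, $\mathbf{F}^{(\alpha)}_{n}(t)$ is the ranked vector of cluster sizes of Bernoulli bond percolation on $\mathbf{t}_{n}$ with parameter $1-(B_{n}/n)t$, which through the Prim--path encoding of Section~\ref{Sec3} is the ranked vector of excursion lengths of a random walk bridge with a deterministic linear drift; a local limit computation then gives the Poisson--conditioned limit with the intensity above. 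I would present the Miermont--based argument as the main proof and record the discrete route as a remark.
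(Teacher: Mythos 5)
Your proposal takes essentially the same route as the paper: Corollary~\ref{corollary1} is stated there without a written proof, as a direct consequence of Theorem~2 of Miermont \cite{Miermont2001} applied to $Y_{\alpha}^{(t)}$, and your reduction to that theorem (together with your sketch of how it is proved, via the Vervaat transform, exchangeable increments, and Kendall's identity for the first--passage subordinator) matches the intended argument. One caveat on the sketch: the claim that a multiplicative constant in the intensity is irrelevant because it ``cancels under the conditioning'' is false in general --- the law of the ranked atoms of a Poisson measure with intensity $c\Lambda$ conditioned on $\sum_{i}{\rm a}_{i}=1$ does depend on $c$ (for the gamma L\'evy measure $c\,z^{-1}e^{-z}\,{\rm d}z$ one obtains the Poisson--Dirichlet law of parameter $c$) --- so the exact normalization $z^{-1}p_{z}(-tz)\,{\rm d}z$ must be read off from Miermont's computation rather than fixed only up to a constant.
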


Following Bertoin's \cite{BertoinPer2013} work about Bernoulli bond-percolation on random trees. The percolation regime $1-(B_{n}/n)t$ on $\mathbf{t}_{n}$ corresponds to the so-called supercritical regime. Indeed, the result in Corollary \ref{corollary1} has already been proved by Pitman \cite{Pitman1999} for Cayley trees. Furthermore, it has been shown in \cite{AldousPitman1998, Bertoin2000} that the distribution of $\mathbf{F}^{(2)}(t)$ is equal to that of the ranked jump sizes (in decreasing order) of a stable subordinator of index $1/2$ over the interval $[0,t]$, conditionally on being $1$ at time $t$. In general, for $t >0$, $\Lambda_{\alpha}^{(t)}({\rm d} z) \coloneqq  z^{-1}p_{z}(-tz)  \mathds{1}_{\{ z >0 \}} {\rm d} z$ is the L\'evy measure of a not killed pure jump subordinator and ${\rm a}_{1}^{(\alpha)}(t) > {\rm a}_{2}^{(\alpha)} (t) > \cdots$ is the ranked jump sizes of this subordinator before time $t$; see \cite{Miermont2001}. We refer to \cite{Perman1992} and \cite[Section 8.1]{PitmanYor1997} for more information about the distribution of the jumps of a subordinator.
 
\section{Stable L\'evy processes, bridges and excursions} \label{Sec2}

In this section, we recall several results about stable L\'evy processes without negative jumps and refer the interesting reader to \cite[Chapter VIII]{Bertoin1996} or the work of Chaumont \cite{Chaumont1997} for further details. \\

\noindent \textbf{Spectrally positive stable L\'evy processes.} Let $(\Omega, \mathcal{F}, \mathbb{P})$ be the underlying probability space. A strictly stable spectrally positive L\'evy process with index $\alpha \in (1,2]$ is a random process $X_{\alpha} = (X_{\alpha}(s), s \geq 0)$ with paths in $\mathbb{D}(\mathbb{R}_{+}, \mathbb{R})$, which has independent and stationary increments, no negative jumps and such that $\mathbb{E}[\exp(-\lambda X_{\alpha}(s))] = \exp(c s  \lambda^{\alpha})$ for every $s, \lambda \geq 0$, and some constant $c >0$. An important feature of $X_{\alpha}$ is the so-called scaling property: for every real constant $k >0$, the process $(k^{-1/\alpha} X_{\alpha}(ks), s \geq 0)$ has the same distribution as $X_{\alpha}$. Then, in this work, we can and we will take $c=1$ if $\alpha = (1,2)$, and $c = 1/2$ if $\alpha =2$, without loss of generality. In particular, for $\alpha=2$, the process $X_{2}$ is the standard Brownian motion on the positive real line. \\

\noindent \textbf{Stable bridge and stable normalized excursion.} The stable L\'evy bridge $X_{\alpha}^{\rm br} = (X_{\alpha}^{\rm br} (s), s \in  [0,1])$ is a random process with paths in $\mathbb{D}([0,1], \mathbb{R})$ that can informally be defined as the process $X_{\alpha}$ conditioned to be at level $0$ at time $1$. This conditioning can be made rigorous and we refer to \cite{Chaumont1997} for details. The normalized excursion $X_{\alpha}^{\rm exc} = (X_{\alpha}^{\rm exc} (s), s \in  [0,1])$ of a spectrally positive $\alpha$-stable L\'evy process with unit lifetime (or  $\alpha$-stable excursion for simplicity) is a random process with paths in $\mathbb{D}([0,1], \mathbb{R})$ that can be thought as the process $X_{\alpha}^{\rm br}$ conditioned to stay nonnegative between times $0$ and $1$. Let us make this more precise and formally define the process $X_{\alpha}^{\rm exc}$. We consider the so-called Vervaat transform (or Vervaat excursion) introduced by Tak\'acs \cite{Tak1967} and used by Vervaat \cite{Veervat1979} to change a bridge type function in $\mathbb{D}([0,1], \mathbb{R})$ into an excursion. More precisely, a bridge is a function $g \in \mathbb{D}([0,1], \mathbb{R})$ such that $g(0) = g(1) = g(1-)=0$. For any $g \in \mathbb{D}([0,1], \mathbb{R})$, we set $\bar{\mu}(g) \coloneqq \inf \{s \in [0,1]: g(s-)\wedge g(s) = \inf_{u \in [0,1]}g(u) \}$, i.e., the smallest location of the infimum of $g$. Then, we define the Vervaat transform $\mathbf{V}$ of a bridge $g \in \mathbb{D}([0,1], \mathbb{R})$ by
\begin{eqnarray*}
\mathbf{V}(g)(s) \coloneqq \left\{ \begin{array}{lcl}
              g(s + \bar{\mu}(g)) - \inf_{u \in [0,1]}g(u) & \mbox{  if } & s \leq 1- \bar{\mu}(g),\\
               g(s + \bar{\mu}(g)-1) - \inf_{u \in [0,1]}g(u)  & \mbox{  if } & s \geq 1- \bar{\mu}(g). \\
              \end{array}
    \right.
\end{eqnarray*}

\noindent Clearly, $\mathbf{V}(g)$ is a path in $\mathbb{D}([0,1], \mathbb{R})$ which only takes nonnegative values and $\mathbf{V}(g)(0) = \mathbf{V}(g)(1)=0$. It is well-known that a stable bridge $X_{\alpha}^{\rm br}$ satisfies $X_{\alpha}^{\rm br}(0) = X_{\alpha}^{\rm br}(1) = X_{\alpha}^{\rm br}(1-)=0$. Moreover, $X_{\alpha}^{\rm br}$ reaches its infimum at a unique random time that $\bar{\mu}_{\alpha} \coloneqq \bar{\mu}(X_{\alpha}^{\rm br})$; see \cite{Chaumont1997}. Thus, we formally define the $\alpha$-stable excursion as the Vervaat transform of the stable bridge $X_{\alpha}^{\rm br}$, i.e., $X_{\alpha}^{\rm exc} \coloneqq \mathbf{V}(X_{\alpha}^{\rm br})$. We refer to the work of Chaumont \cite{Chaumont1997} (see also \cite[Chapter VIII]{Bertoin1996}) for other constructions
of the process $X_{\alpha}^{\rm exc}$ via path transformations, or alternatively, using arguments from excursion theory of Markov processes. A useful property (see \cite[Theorem 4]{Chaumont1997}) that one can deduce from the above construction is that 
\begin{eqnarray} \label{eq17}
\bar{\mu}_{\alpha} \hspace*{3mm} \text{and} \hspace*{3mm}  X_{\alpha}^{\rm exc} \hspace*{2mm} \text{are independent} \hspace*{2mm}  \text{and} \hspace*{2mm} \bar{\mu}_{\alpha} \hspace*{2mm} \text{is uniformly distributed on}  \hspace*{2mm} [0,1].
\end{eqnarray}

\section{The coding of Galton--Watson trees and their fragmentation} \label{Sec3}

In this section, we formally introduce the family of critical Galton--Watson trees and explain how they can be coded by different functions, namely the so-called Łukasiewicz path and a similar path derived by the Prim's algorithm. The latter provides an alternative order on the vertices of the tree, which we refer to as the Prim order. Following \cite{Brou2016}, we will see how the Prim's order of the vertices can be used to define a consistent exploration process of the fragmentation forest that stores all the information of the sizes of its connected components. Finally, we prove a distributional property for this exploration process that will be a crucial ingredient in the proof of Theorem \ref{Theo3}.  \\

\noindent \textbf{Plane trees.} We follow the formalism of Neveu \cite{Ne1986}. Let $\mathbb{N} = \{1, 2, \dots \}$ be the set of positive integers, set $\mathbb{N}^{0} = \{ \varnothing  \}$ and consider the set of labels
$\mathbb{U} = \bigcup_{n \geq 0} \mathbb{N}^{n}$. For $u = (u_{1}, \dots, u_{n}) \in \mathbb{U}$, we denote by $|u| = n $ the length (or generation, or height) of $u$; if $v = (v_{1}, \dots, v_{m}) \in \mathbb{U}$, we let $uv = (u_{1}, \dots, u_{n}, v_{1}, \dots, v_{m}) \in \mathbb{U}$ be the concatenation of $u$ and $v$. A plane tree is a nonempty, finite subset $\tau \subset  \mathbb{U}$ such that: (i) $\varnothing \in \tau$; (ii) if $v \in \tau$ and $v = uj$ for some $j \in \mathbb{N}$, then $u \in \tau$; (iii) if $u \in \tau$, then there exists an integer $c(u) \geq 0$ such that $ui \in  \tau$ if and only if $1 \leq i \leq c(u)$.  We will view each vertex $u$ of a tree $\tau$ as an individual of a population whose genealogical tree is $\tau$. The vertex $\varnothing$ is called the root of the tree and for every $u \in \tau$, $c(u)$ is the number of children of $u$ (if $c(u) = 0$, then $u$ is called a leaf, otherwise, $u$ is called an internal vertex). The total progeny (or size) of $\tau$ will be denoted by $\zeta(\tau) = \text{Card}(\tau)$ (i.e., the number of vertices of $\tau$). We denote by $\mathbb{T}$ the set of plane trees and for each $n \in \mathbb{N}$, by $\mathbb{T}_{n}$ the set of plane trees with $n$ vertices, or equivalently $n-1$ edges. \\

\noindent \textbf{Galton--Watson trees.} Let $\mu$ be a probability measure on $\mathbb{Z}_{+}$ which satisfies $\mu(0)>0$,  expectation $\sum_{k=0}^{\infty} k \mu(k) = 1$ and such that $\mu(0)+\mu(1)<1$. The law of a critical Galton--Watson tree with offspring distribution $\mu$ is the unique probability measure $\mathbb{P}_{\mu}$ on $\mathbb{T}$ satisfying: (i) $\mathbb{P}_{\mu} (c(\varnothing ) = k) = \mu(k)$ for every $k \geq 0$; (ii) For every $k \geq 1$ such that $\mu(k) >0$, conditioned on the event $\{c(\varnothing ) = k \}$, the subtrees that stem from the children of the root $\{ u \in \mathbb{U} : 1u \in \tau \}, \dots,  \{ u \in \mathbb{U} : ku \in \tau \}$ are independent and distributed as $\mathbb{P}_{\mu}$. Otter \cite{Otter1949} shows that the law $\mathbb{P}_{\mu}$ is given by the explicit formula $\mathbb{P}_{\mu}(\tau) = \prod_{u \in \tau} \mu(c(u))$. A random tree whose distribution is $\mathbb{P}_{\mu}$ will be called a Galton--Watson tree with offspring distribution $\mu$. We also denote by $\mathbb{P}_{\mu}^{(n)}$ the law on $\mathbb{T}_{n}$ of a Galton--Watson tree with
offspring distribution $\mu$ conditioned to have $n$ vertices, providing that this conditioning makes sense. \\

\noindent \textbf{Coding planar trees by discrete paths.}
In this work, we will use two different orderings of the vertices of a tree $\tau \in \mathbb{T}$:
\begin{itemize}
\item[(i)]\textbf{Lexicographical ordering.} Given $v,w \in \tau$, we write $v \prec_{ \text{lex}} w$ if there exists $z \in \tau$ such that $v = z(v_{1}, \dots, v_{n})$, $w = z(w_{1}, \dots, w_{m})$ and $v_{1} < w_{1}$. 

\item[(ii)] \textbf{Prim ordering.} Let $\textbf{edge}(\tau)$ be the set of edges of $\tau$ and consider a sequence of distinct and positive weights $\mathbf{w} = (w_{e}: e \in \textbf{edge}(\tau))$ (i.e., each edge $e$ of $\tau$ is marked with a different and positive weight $w_{e}$). If there is an edge connecting two vertices, say $u$ and $v$, in $\tau$, we denote it by $\{u, v\}$. Let us describe the Prim order $\prec_{\text{prim}}$ of the vertices in $\tau$, that is, $\varnothing = u(0) \prec_{\text{prim}}  u(1) \prec_{\text{prim}} \dots  \prec_{\text{prim}} u(\zeta(\tau)-1)$. We will use the notation $V_{i}$ for the set $\{u(0), \dots, u(i-1)\}$, for $1 \leq i \leq \zeta(\tau)$. First set $u(0) = \varnothing$ and $V_{0} = \{u(0)\}$. Suppose that for some $1 \leq i \leq \zeta(\tau)-1$, the vertices $u(0), \dots, u(i-1)$ have been defined. Consider the weights $\{w_{\{u,v\}}: u \in V_{i}, v \not \in V_{i}\}$ of edges between a vertex of $V_{i}$ and another outside of $V_{i}$. Since all the weights are distinct, the minimum weight in $\{w_{\{u,v\}}: u \in V_{i}, v \not \in V_{i}\}$ is reached at an edge $\{\tilde{u}, \tilde{v} \}$ where $\tilde{u} \in V_{i}$ and $\tilde{v} \not \in V_{i}$. Then set $u(i) = \tilde{v}$. This iterative procedure completely determines the Prim order $\prec_{\text{prim}}$.
\end{itemize}

\noindent For $\ast \in \{ {\rm lex}, {\rm prim} \}$, we associate to the ordering $\varnothing = u(0) \prec_{\ast} u(1) \prec_{\ast} \dots \prec_{\ast} u(\zeta(\tau)-1)$ of the vertices of $\tau$ a path $W^{\ast} = (W^{\ast}(k), 0 \leq k \leq \zeta(\tau))$, by letting $W^{\ast}(0) = 0$ and for $0 \leq k \leq \zeta(\tau)-1$, $W^{\ast}(k+1) = W^{\ast}(k)  + c(u(k))-1$, where we recall that $c(u(k))$ denotes the number of children of the vertex $u(k) \in \tau$. Observe that  $W^{\ast}(k+1) - W^{\ast}(k) = c(u(k)) -1 \geq -1$ for every $0 \leq k \leq \zeta(\tau)-1$, with equality if and only if $u(k)$ is a leaf of $\tau$. Note also that $W^{\ast}(k) \geq 0$, for every $0 \leq k \leq \zeta(\tau)-1$, but $W^{\ast}(\zeta(\tau)) = -1$. We shall think of such a path as the step function on $[0,\zeta(\tau)]$ given $s \mapsto W^{\ast}(\lfloor s \rfloor)$. The path $W^{\text{lex}}$ is commonly called Łukasiewicz path of $\tau$, and from now on we refer to $W^{\text{prim}}$ as the Prim path; see Figure \ref{Fig2}. See \cite{Leje2005} for more details and properties on the Łukasiewicz path. 

The procedure just described to obtain the Prim ordering is known as Prim's algorithm (or Prim-Jarn\'ik algorithm); see \cite{Prim}. This algorithm associates to any properly weighted graph its unique minimum spanning tree. In practice, one could also consider that $\mathbf{w}$ is a sequence of i.i.d.\ positive random variables such that they are all distinct a.s. See Figure \ref{Fig1} for an illustration of the previous orderings of the vertices in a tree. 
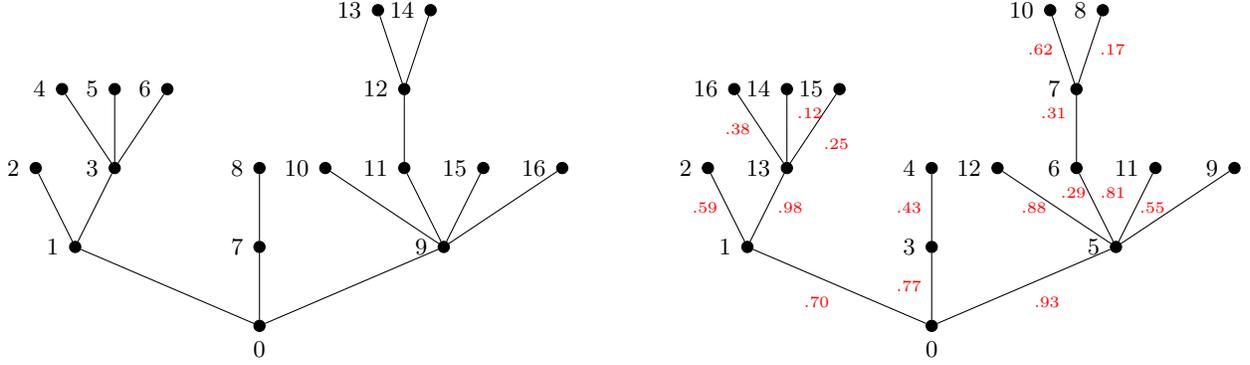
\begin{figure}[!htb]
   \begin{minipage}{0.48\textwidth}
     \centering
      \begin{tikzpicture}[scale=.7,font=\footnotesize]
\tikzstyle{solid node}=[circle,draw,inner sep=1.5,fill=black]
    \tikzstyle{level 1}=[sibling distance=35mm]
    \tikzstyle{level 2}=[sibling distance=15mm]
    \tikzstyle{level 3}=[sibling distance=10mm]
\node[solid node, label=below:{$0$}]{}
[grow=north]
child{node[solid node, label=left:{$9$}]{}
   child{node[solid node, label=left:{$16$}]{}}
   child{node[solid node, label=left:{$15$}]{}}
   child{node[solid node, label=left:{$11$}]{}
     child{node[solid node, label=left:{$12$}]{}
        child{node[solid node, label=left:{$14$}]{}}
        child{node[solid node, label=left:{$13$}]{}}
       }
     }
   child{node[solid node, label=left:{$10$}]{}}
}
child{node[solid node, label=left:{$7$}]{}
   child{node[solid node, label=left:{$8$}]{}}
}
child{node[solid node, label=left:{$1$}]{}
   child{node[solid node, label=left:{$3$}]{}
     child{node[solid node, label=left:{$6$}]{}}
     child{node[solid node, label=left:{$5$}]{}}
     child{node[solid node, label=left:{$4$}]{}}
   }
   child{node[solid node, label=left:{$2$}]{}}
}
;
\end{tikzpicture}
   \end{minipage}\hfill
   \begin{minipage}{0.48\textwidth}
     \centering
      \begin{tikzpicture}[scale=.7,font=\footnotesize]
\tikzstyle{solid node}=[circle,draw,inner sep=1.5,fill=black]
    \tikzstyle{level 1}=[sibling distance=35mm]
    \tikzstyle{level 2}=[sibling distance=15mm]
    \tikzstyle{level 3}=[sibling distance=10mm]
\node[solid node, label=below:{$0$}]{}
[grow=north]
child{node[solid node, label=left:{$5$}]{}
   child{node[solid node, label=left:{$9$}]{} edge from parent node[left,draw=none] {\tiny{\textcolor{red}{$.55$}}}}
   child{node[solid node, label=left:{$11$}]{} edge from parent node[above left,draw=none] {\tiny{\textcolor{red}{$.81$}}} }
   child{node[solid node, label=left:{$6$}]{}
     child{node[solid node, label=left:{$7$}]{}
        child{node[solid node, label=left:{$8$}]{} edge from parent node[right,draw=none] {\tiny{\textcolor{red}{$.17$}}}}
        child{node[solid node, label=left:{$10$}]{} edge from parent node[left,draw=none] {\tiny{\textcolor{red}{$.62$}}}}
       edge from parent node[above left,draw=none] {\tiny{\textcolor{red}{$.31$}}}}
     edge from parent node[above left,draw=none] {\tiny{\textcolor{red}{$.29$}}} }
   child{node[solid node, label=left:{$12$}]{} edge from parent node[left,draw=none] {\tiny{\textcolor{red}{$.88$}}}}
edge from parent node[below right,draw=none] {\tiny{\textcolor{red}{$.93$}}} }
child{node[solid node, label=left:{$3$}]{}
   child{node[solid node, label=left:{$4$}]{} edge from parent node[left,draw=none] {\tiny{\textcolor{red}{$.43$}}}}
edge from parent node[left,draw=none] {\tiny{\textcolor{red}{$.77$}}} }
child{node[solid node, label=left:{$1$}]{}
   child{node[solid node, label=left:{$13$}]{}
     child{node[solid node, label=left:{$15$}]{} edge from parent node[below right,draw=none] {\tiny{\textcolor{red}{$.25$}}}}
     child{node[solid node, label=left:{$14$}]{} edge from parent node[above right,draw=none] {\tiny{\textcolor{red}{$.12$}}}}
     child{node[solid node, label=left:{$16$}]{} edge from parent node[left,draw=none] {\tiny{\textcolor{red}{$.38$}}}}
edge from parent node[right,draw=none] {\tiny{\textcolor{red}{$.98$}}}   
   }
   child{node[solid node, label=left:{$2$}]{} edge from parent node[left,draw=none] {\tiny{\textcolor{red}{$.59$}}}}
edge from parent node[below left,draw=none] {\tiny{\textcolor{red}{$.70$}}}
}
;
\end{tikzpicture}
   \end{minipage}\hfill
\caption{From left to right, a plane tree with vertices labeled in lexicographical order and a weighted plane tree with vertices labeled in Prim order.}\label{Fig1}
\end{figure}

Define the probability measure $\hat{\mu}$ on $\{-1, 0, 1, \dots \}$ by $\hat{\mu}(k) = \mu( k+1 )$ for every $k \geq -1$. Let $X = (X(k), k \geq 0)$ be a random walk which starts at $0$ with jump distribution $\hat{\mu}$ and define also the time $\zeta_{1} = \inf\{k \geq 0: X(k)= -1 \}$. In the Prim ordering, consider that the weights $\mathbf{w}$ are a sequence of i.i.d.\ positive random variables such that they are distinct a.s.
\begin{figure}[!htb]
  \begin{minipage}{0.48\textwidth}
     \centering
\begin{tikzpicture}[scale=.8,font=\footnotesize]
\begin{axis}[axis x line = middle,axis y line = left, title = $W^{\text{lex}}$, width=10cm, height=7cm, xtick={0,2,...,16}, ytick={-1,0,...,4}, xmin=0, xmax=18,  ymin=-1, ymax=4.5]
\addplot[domain=0:1, ultra thick] {0};
\addplot[domain=1:2, ultra thick] {2};
\addplot[domain=2:3, ultra thick] {3};
\addplot[domain=3:4, ultra thick] {2};
\addplot[domain=4:5, ultra thick] {4};
\addplot[domain=5:6, ultra thick] {3};
\addplot[domain=6:7, ultra thick] {2};
\addplot[domain=7:8, ultra thick] {1};
\addplot[domain=8:9, ultra thick] {1};
\addplot[domain=9:10, ultra thick] {0};
\addplot[domain=10:11, ultra thick] {3};
\addplot[domain=11:12, ultra thick] {2};
\addplot[domain=12:13, ultra thick] {2};
\addplot[domain=13:14, ultra thick] {3};
\addplot[domain=14:15, ultra thick] {2};
\addplot[domain=15:16, ultra thick] {1};
\addplot[domain=16:17, ultra thick] {0};
\addplot[domain=17:18, ultra thick] {-1};
\addplot[domain=0:18, dotted] {-1};
\addplot[domain=0:18, dotted] {1};
\addplot[domain=0:18, dotted] {2};
\addplot[domain=0:18, dotted] {3};
\addplot[domain=0:18, dotted] {4};
 \end{axis}
\end{tikzpicture}
   \end{minipage}
  \begin{minipage}{0.48\textwidth}
     \centering
\begin{tikzpicture}[scale=.8,font=\footnotesize]
\begin{axis}[axis x line = middle,axis y line = left, title = $W^{\text{prim}}$, width=10cm, height=7cm, xtick={0,2,...,16}, ytick={-1,0,...,4,5}, xmin=0, xmax=18,  ymin=-1, ymax=5.5]
\addplot[domain=0:1, ultra thick] {0};
\addplot[domain=1:2, ultra thick] {2};
\addplot[domain=2:3, ultra thick] {3};
\addplot[domain=3:4, ultra thick] {2};
\addplot[domain=4:5, ultra thick] {2};
\addplot[domain=5:6, ultra thick] {1};
\addplot[domain=6:7, ultra thick] {4};
\addplot[domain=7:8, ultra thick] {4};
\addplot[domain=8:9, ultra thick] {5};
\addplot[domain=9:10, ultra thick] {4};
\addplot[domain=10:11, ultra thick] {3};
\addplot[domain=11:12, ultra thick] {2};
\addplot[domain=12:13, ultra thick] {1};
\addplot[domain=13:14, ultra thick] {0};
\addplot[domain=14:15, ultra thick] {2};
\addplot[domain=15:16, ultra thick] {1};
\addplot[domain=16:17, ultra thick] {0};
\addplot[domain=17:18, ultra thick] {-1};
\addplot[domain=0:18, dotted] {-1};
\addplot[domain=0:18, dotted] {1};
\addplot[domain=0:18, dotted] {2};
\addplot[domain=0:18, dotted] {3};
\addplot[domain=0:18, dotted] {4};
\addplot[domain=0:18, dotted] {5};
 \end{axis}
\end{tikzpicture}
   \end{minipage}
\caption{In the left, the $\L$ukasiewicz path of the plane tree in Figure \ref{Fig1}. In the right, the Prim path of the plane tree in Figure \ref{Fig1}}\label{Fig2}
\end{figure}
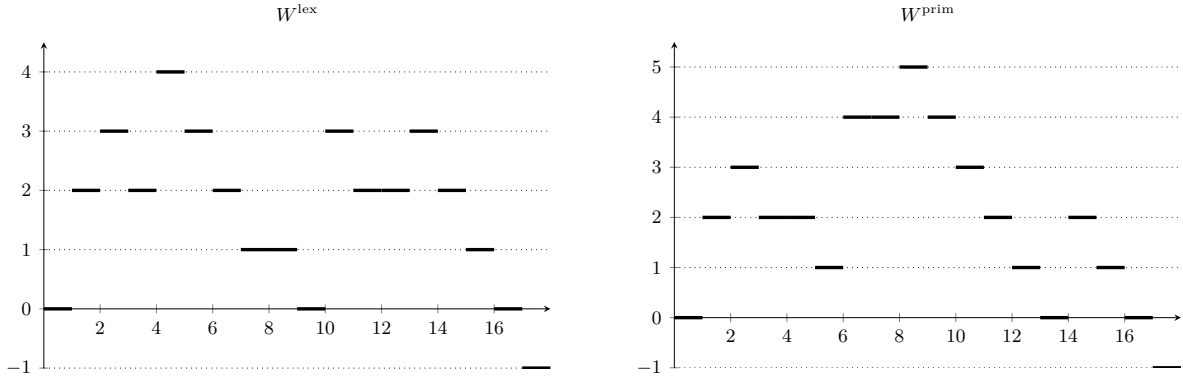

\begin{proposition} \label{Pro1}
For every $\ast \in \{{\rm lex}, {\rm prim} \}$, if we sample a plane tree according to $\mathbb{P}_{\mu}$, then $W^{\ast}$ is distributed as $(X(0), X(1), \dots, X(\zeta_{1}))$. In particular, the total progeny of the sample plane tree has the same distribution as $\zeta_{1}$. 
\end{proposition}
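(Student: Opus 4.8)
The plan is to establish both cases simultaneously by realising $W^{\ast}$, for $\ast\in\{{\rm lex},{\rm prim}\}$, as a random walk with step distribution $\hat\mu$ killed at its first visit to $-1$. For $\ast={\rm lex}$ this is the classical bijective coding of Galton--Watson trees by their \L ukasiewicz path, and I would only recall it briefly (see \cite{Leje2005}), concentrating on $\ast={\rm prim}$. The unifying point is that each enumeration $\varnothing=u(0)\prec_{\ast}u(1)\prec_{\ast}\cdots$ is a \emph{Markovian exploration} of $\tau$: one builds an increasing family of connected subtrees $V_{k}=\{u(0),\dots,u(k-1)\}$ rooted at $\varnothing$, and $u(k)$ is a boundary vertex of $V_{k}$ (a child of a vertex of $V_{k}$ lying outside $V_{k}$) selected through a rule that inspects only the already-revealed part of $\tau$: for ${\rm lex}$, the next vertex in depth-first order; for ${\rm prim}$, the vertex across the boundary edge of minimal weight. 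The only external randomness used by the Prim rule is the edge-weight family $\mathbf w$, which is independent of $\tau$.

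I would first dispose of the deterministic bookkeeping. Since $\tau$ is a tree and each $V_{k}$ is connected and contains the root, any vertex outside $V_{k}$ that is adjacent to $V_{k}$ is joined to it by the single edge to its parent; hence the number of boundary edges of $V_{k}$ equals the number $Q_{k}$ of boundary vertices, and $Q_{k}=0$ if and only if $V_{k}=\tau$, so the exploration visits all $\zeta(\tau)$ vertices and then halts. Moving from $V_{k}$ to $V_{k+1}$ deletes the revealed vertex $u(k)$ from the boundary and inserts its $c(u(k))$ children (none of which already lay in $V_{k}$, again by connectedness), so $Q_{k+1}=Q_{k}-1+c(u(k))$; with $Q_{1}=c(\varnothing)=c(u(0))$ this yields $Q_{k}=1+\sum_{i=0}^{k-1}(c(u(i))-1)=1+W^{\ast}(k)$. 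Consequently $W^{\ast}(k)\geq 0$ for $k<\zeta(\tau)$, $W^{\ast}(\zeta(\tau))=-1$, and $\zeta(\tau)=\inf\{k\geq 0:W^{\ast}(k)=-1\}$, so the proposition reduces to identifying the law of the increment sequence $(c(u(k)))_{0\leq k<\zeta(\tau)}$.

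The key probabilistic step is that, under $\mathbb{P}_{\mu}$, this sequence is i.i.d.\ with law $\mu$. Let $\mathcal F_{k}$ be the $\sigma$-field generated by $V_{k}$, by $(c(v))_{v\in V_{k}}$, and by the weights of all edges incident to $V_{k}$; both exploration rules are $\mathcal F_{k}$-measurable, so $u(k)$ is $\mathcal F_{k}$-measurable and is chosen without ever looking at the subtree dangling below it. By the recursive description of $\mathbb{P}_{\mu}$ and the independence of $\mathbf w$ from $\tau$, on the event $\{u(k)=v\}$ — which lies in $\mathcal F_{k}$ and forces $v\in\tau$ — the subtree $\theta_{v}\tau$ rooted at $v$ is a fresh $\mathbb{P}_{\mu}$-tree, independent of $\mathcal F_{k}$; summing over $v$ shows $c(u(k))$ has law $\mu$ and is independent of $\mathcal F_{k}$, hence of $c(u(0)),\dots,c(u(k-1))$. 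Combined with the bookkeeping identity, $(W^{\ast}(k),0\leq k\leq\zeta(\tau))$ is a walk with step law $\hat\mu(k)=\mu(k+1)$ stopped at its first hitting time of $-1$, i.e.\ distributed as $(X(0),\dots,X(\zeta_{1}))$, and in particular $\zeta(\tau)\stackrel{d}{=}\zeta_{1}$.

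The main obstacle is the rigorous justification of the displayed branching step: that conditioning on the \emph{identity} of the next explored vertex does not size-bias the subtree hanging from it. Here one must (i) check that $u(k)$ is genuinely $\mathcal F_{k}$-measurable for the Prim rule — all weights needed to run Prim up to step $k$ are weights of boundary edges of the nested sets $V_{1}\subseteq\cdots\subseteq V_{k}$, hence of edges incident to $V_{k}$, and are therefore already in $\mathcal F_{k}$ — and (ii) verify that, on $\{u(k)=v\}$, the $\sigma$-field $\mathcal F_{k}$ is a function of $\mathbf w$ and of the part of $\tau$ \emph{outside} $v\mathbb U$ (using $V_{k}\cap v\mathbb U=\varnothing$, which follows from $v\notin V_{k}$ and connectedness), so that the factorisation $\mathbb{P}_{\mu}(\tau)=\prod_{w\in\tau}\mu(c(w))$ gives the asserted conditional independence of $\theta_{v}\tau$. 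A clean way to organise (ii) is a short induction on $k$: conditionally on $c(\varnothing)=m$ the tree is $m$ independent $\mathbb{P}_{\mu}$-trees grafted at the root, and after the first step the exploration becomes an (interleaved) Markovian exploration of these independent subtrees. The remaining ingredients — the bookkeeping and the reduction to $(X(0),\dots,X(\zeta_{1}))$ — are routine.
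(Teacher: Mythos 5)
Your proof is correct, and it fills in detail that the paper itself does not supply: the paper's proof of Proposition~\ref{Pro1} is a one-line pointer to \cite[Proposition 1.5]{Leje2005} for the lexicographic case and to a ``simple adaptation'' of that argument (or alternatively to \cite[Lemmas 15 and 16]{Brou2016}) for the Prim case. Your unified treatment via ``Markovian exploration'' is essentially the content of those references made explicit.

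A few remarks on the comparison. The bookkeeping half of your argument (the identity $Q_{k}=1+W^{\ast}(k)$ between the boundary count and the path, and hence $\zeta(\tau)=\inf\{k:W^{\ast}(k)=-1\}$) is exactly the invariant that underlies both the \L ukasiewicz coding in \cite{Leje2005} and the Prim coding in \cite{Brou2016}; phrasing it once for any exploration that (a) grows a connected rooted subtree and (b) chooses the next vertex among current boundary vertices via a rule measurable in the revealed data is the economical way to do both cases at once, and is cleaner than adapting the \L ukasiewicz bijection argument by hand. The probabilistic half --- that, conditionally on $\mathcal F_k$, the subtree $\theta_{u(k)}\tau$ is a fresh $\mathbb{P}_{\mu}$-tree, so the increments $c(u(k))$ are i.i.d.\ with law $\mu$ --- is the standard branching-property/optional-stopping step, and your two checkpoints (i) and (ii) isolate precisely where the Prim case needs more care than the lex case, namely that all weights consulted up to step $k$ are weights of edges incident to $V_{k}$, and that $V_{k}\cap u(k)\mathbb U=\varnothing$ so $\mathcal F_{k}$ carries no information about the dangling subtree. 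I would only tighten the phrasing of the conditional independence: since $\{u(k)=v\}\in\mathcal F_{k}$, the correct formulation is that the conditional law of $\theta_{u(k)}\tau$ given $\mathcal F_{k}$ is $\mathbb{P}_{\mu}$ on $\{k<\zeta(\tau)\}$ (equivalently, $\theta_{u(k)}\tau$ is independent of $\mathcal F_{k}$ with law $\mathbb{P}_{\mu}$), which is what the displayed sum over $v$ delivers; as written, ``independent of $\mathcal F_{k}$ on the event $\{u(k)=v\}$'' is a slight abuse of language. That is a matter of exposition, not a gap.
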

\begin{proof}
The proof for the Łukasiewicz path can be found in \cite[Proposition 1.5]{Leje2005}. For the Prim path the proof follows from a simple adaptation of that of \cite[Proposition 1.5]{Leje2005}; see also \cite[Lemmas 15 and 16]{Brou2016} for an alternative approach.
\end{proof}

\noindent \textbf{Fragmentation of a plane tree.} Consider $\tau \in \mathbb{T}$ and let $\textbf{edge}(\tau)$ denote its set of edges. Equip the edges of $\tau$ with i.i.d.\ uniform random weights $\mathbf{w} = (w_{e}: e \in \textbf{edge}(\tau))$ on $[0,1]$ and independently of the tree $\tau$. In particular, for a vertex $v \in \tau$ with $c(v) \geq 1$ children, we write $(w_{v,k}, 1 \leq k \leq c(v))$ for the weights of the edges connecting $v$ with its children. For $t \in [0,1]$, we then keep the edges of $\tau$ with weight smaller than $t$ and discard the others. This gives rise to a forest $\mathbf{f}_{\tau}(t)$ with the same set of vertices as $\tau$ but with set of edges given by $\textbf{edge}(\mathbf{f}_{\tau}(t)) = \{ e \in \textbf{edge}(\tau): w_{e} \leq t\}$. Furthermore, each vertex $v \in \mathbf{f}_{\tau}(t)$ has $c_{t}(v) = \sum_{k=1}^{c(v)} \mathds{1}_{\{ w_{v,k} \leq t \}}$ children if $c(v) \geq 1$; otherwise, $c_{t}(v) = 0$ whenever $c(v) =0$. In what follows, we refer to the forest $\mathbf{f}_{\tau}(t)$ associated to a plane tree $\tau$ and uniform weights $\mathbf{w}$ as {\sl the fragmented forest} at time $t \in [0,1]$, or simply,  {\sl fragmentation forest}; see Figure \ref{Fig3}. In this work we restrict ourselves to the case uniform i.i.d.\ weights, but certainly some of the forthcoming results can be extended for more general sequences of weights. \\

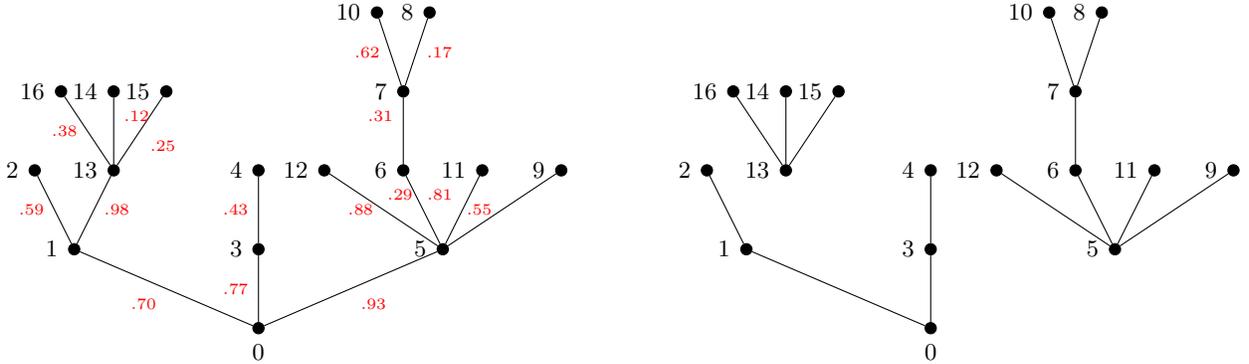
\begin{figure}[!htb]
   \begin{minipage}{0.48\textwidth}
     \centering
      \begin{tikzpicture}[scale=.7,font=\footnotesize]
\tikzstyle{solid node}=[circle,draw,inner sep=1.5,fill=black]
    \tikzstyle{level 1}=[sibling distance=35mm]
    \tikzstyle{level 2}=[sibling distance=15mm]
    \tikzstyle{level 3}=[sibling distance=10mm]
\node[solid node, label=below:{$0$}]{}
[grow=north]
child{node[solid node, label=left:{$5$}]{}
   child{node[solid node, label=left:{$9$}]{} edge from parent node[left,draw=none] {\tiny{\textcolor{red}{$.55$}}}}
   child{node[solid node, label=left:{$11$}]{} edge from parent node[above left,draw=none] {\tiny{\textcolor{red}{$.81$}}} }
   child{node[solid node, label=left:{$6$}]{}
     child{node[solid node, label=left:{$7$}]{}
        child{node[solid node, label=left:{$8$}]{} edge from parent node[right,draw=none] {\tiny{\textcolor{red}{$.17$}}}}
        child{node[solid node, label=left:{$10$}]{} edge from parent node[left,draw=none] {\tiny{\textcolor{red}{$.62$}}}}
       edge from parent node[above left,draw=none] {\tiny{\textcolor{red}{$.31$}}}}
     edge from parent node[above left,draw=none] {\tiny{\textcolor{red}{$.29$}}} }
   child{node[solid node, label=left:{$12$}]{} edge from parent node[left,draw=none] {\tiny{\textcolor{red}{$.88$}}}}
edge from parent node[below right,draw=none] {\tiny{\textcolor{red}{$.93$}}} }
child{node[solid node, label=left:{$3$}]{}
   child{node[solid node, label=left:{$4$}]{} edge from parent node[left,draw=none] {\tiny{\textcolor{red}{$.43$}}}}
edge from parent node[left,draw=none] {\tiny{\textcolor{red}{$.77$}}} }
child{node[solid node, label=left:{$1$}]{}
   child{node[solid node, label=left:{$13$}]{}
     child{node[solid node, label=left:{$15$}]{} edge from parent node[below right,draw=none] {\tiny{\textcolor{red}{$.25$}}}}
     child{node[solid node, label=left:{$14$}]{} edge from parent node[above right,draw=none] {\tiny{\textcolor{red}{$.12$}}}}
     child{node[solid node, label=left:{$16$}]{} edge from parent node[left,draw=none] {\tiny{\textcolor{red}{$.38$}}}}
edge from parent node[right,draw=none] {\tiny{\textcolor{red}{$.98$}}}   
   }
   child{node[solid node, label=left:{$2$}]{} edge from parent node[left,draw=none] {\tiny{\textcolor{red}{$.59$}}}}
edge from parent node[below left,draw=none] {\tiny{\textcolor{red}{$.70$}}}
}
;
\end{tikzpicture}
   \end{minipage}\hfill
   \begin{minipage}{0.48\textwidth}
     \centering
      \begin{tikzpicture}[scale=.7,font=\footnotesize]
\tikzstyle{solid node}=[circle,draw,inner sep=1.5,fill=black]
    \tikzstyle{level 1}=[sibling distance=35mm]
    \tikzstyle{level 2}=[sibling distance=15mm]
    \tikzstyle{level 3}=[sibling distance=10mm]
\node[solid node, label=below:{$0$}]{}
[grow=north]
child{node[solid node, label=left:{$5$}]{}
   child{node[solid node, label=left:{$9$}]{}}
   child{node[solid node, label=left:{$11$}]{}}
   child{node[solid node, label=left:{$6$}]{}
     child{node[solid node, label=left:{$7$}]{}
        child{node[solid node, label=left:{$8$}]{}}
        child{node[solid node, label=left:{$10$}]{}}
       }
     }
   child{node[solid node, label=left:{$12$}]{}}
edge from parent[draw=none] }
child{node[solid node, label=left:{$3$}]{}
   child{node[solid node, label=left:{$4$}]{}}
}
child{node[solid node, label=left:{$1$}]{}
   child{node[solid node, label=left:{$13$}]{}
     child{node[solid node, label=left:{$15$}]{}}
     child{node[solid node, label=left:{$14$}]{}}
     child{node[solid node, label=left:{$16$}]{}}
   edge from parent[draw=none] }
   child{node[solid node, label=left:{$2$}]{}}
}
;
\end{tikzpicture}
   \end{minipage}\hfill
\caption{A plane tree with uniform random weights in the left side. In the right side, the forest created by keeping the edges with weight at most $t =.92$. The vertices are labelled according to the Prim ordering.}\label{Fig3}
\end{figure}

\noindent \textbf{Prim exploration of the fragmentation forest.}  For a plane tree $\tau \in \mathbb{T}$ and sequence of  i.i.d.\ uniform random weights $\mathbf{w}$ on $[0,1]$, let $\mathbf{f}_{\tau}(t)$ be the fragmentation forest of $\tau$ at time $t \in [0,1]$. Let us now explain how to explore the subtree components of the forest $\mathbf{f}_{\tau}(t)$ by using the approach outlined in \cite[page 532]{Brou2016} (see also \cite{Aldous1997}). For $t \in [0,1]$, denote by $\textbf{Neigh}_{t}(v) \coloneqq \{u \in \mathbf{f}_{\tau}(t): \{u,v\} \in \textbf{edge}(\mathbf{f}_{\tau}(t))\}$ the set of neighbours of $v \in \mathbf{f}_{\tau}(t)$. For a set of vertices $V$ of $\mathbf{f}_{\tau}(t)$, let also $\textbf{Neigh}_{t}(V) \coloneqq ( \bigcup_{v \in V} \textbf{Neigh}_{t}(v)) \setminus V$, the set of neighbours of vertices in $V$ but not in $V$. We associate to the prim ordering $\varnothing = u(0) \prec_{{\rm prim}} u(1) \prec_{{\rm prim}} \dots \prec_{{\rm prim}} u(\zeta(\tau)-1)$ of the vertices of $\tau$ the following exploration process of $\mathbf{f}_{\tau}(t)$ (recall that $\mathbf{f}_{\tau}(t)$ and $\tau$ have the same set of vertices). The first visited vertex is $v_{t}(0)=u(0)$. Suppose that we have explored the vertices $V_{k} = \{v_{t}(0), \dots, v_{t}(k-1)\}$ at some time $1 \leq k  \leq \zeta(\tau)$. If $k = \zeta(\tau)$, we have finished the exploration, and otherwise, one has two possibilities:
\begin{itemize}
\item[(i)] if $\textbf{Neigh}_{t}(V_{k}) \neq \varnothing$, then $v_{t}(k)$ is the next vertex according to the order $\prec_{{\rm prim}}$ that belongs to $\textbf{Neigh}_{t}(V_{k})$, or
\item[(ii)] if $\textbf{Neigh}_{t}(V_{k}) = \varnothing$, then $v_{t}(k)$ is the next vertex according to the order $\prec_{{\rm prim}}$ that belongs to $\tau \setminus V_{k}$. 
\end{itemize}

\noindent This exploration process results in an order for the vertices of $\mathbf{f}_{\tau}(t)$ (equivalently, to the vertices of $\tau$) that we denote by $<_{{\rm prim}}$ (i.e.\ $\varnothing = v_{t}(0) <_{{\rm prim}} v_{t}(1) <_{{\rm prim}} \dots <_{{\rm prim}} v_{t}(\zeta(\tau)-1)$) and call {\sl Prim exploration}. An important feature of the Prim exploration of $\mathbf{f}_{\tau}(t)$ is that the Prim ordering $<_{\text{prim}}$ of its vertices is preserved for all values of $t \in [0,1]$. More precisely, for $t_{1}, t_{2} \in [0,1]$, $v_{t_{1}}(k) = v_{t_{2}}(k)$, for all $0 \leq k \leq \zeta(\tau)-1$; see Figure \ref{Fig3} for an example when $t_{1}=1$ and $t_{2}=.92$. This is a consequence of the algorithm to obtain the Prim ordering of the vertices in $\tau$ which associates to any properly weighted graph its unique minimum spanning tree. We henceforth write $\prec_{\text{prim}}$ instead of $<_{\text{prim}}$ and remove the subindex $t$ from our notation, i.e., we write $\varnothing = v(0) \prec_{\text{prim}} v(1) \prec_{\text{prim}} \dots \prec_{\text{prim}} v(\zeta(\tau)-1)$ for the vertices of $\mathbf{f}_{\tau}(t)$ in Prim order, which is the same as the Prim ordering of the vertices of the tree $\tau$, $\varnothing = u(0) \prec_{\text{prim}} u(1) \prec_{\text{prim}} \dots \prec_{\text{prim}} u(\zeta(\tau)-1)$ presented earlier. 

Following the presentation of \cite[pages 532-533]{Brou2016}, one can associate to the Prim ordering of the vertices of $\mathbf{f}_{\tau}(t)$, an {\sl exploration path} $Z_{t} = (Z_{t}(k), 0 \leq k \leq \zeta(\tau) + 1)$ by letting $Z_{t}(0) = Z(\zeta(\tau)+1) = 0$, and for $1 \leq k \leq \zeta(\tau)$, $Z_{t}(k) = \text{Card}(\textbf{Neigh}_{t}(V_{k}))$. Furthermore, let $\textbf{CC}(\mathbf{f}_{\tau}(t))$ be the set of connected components of $\mathbf{f}_{\tau}(t)$. Then \cite[Lemma 14]{Brou2016} shows that
\begin{eqnarray*}
\text{Card}( \{k \in \{1, \dots, \zeta(\tau)\}: Z_{t}(k)=0 \})= \text{Card}(\textbf{CC}(\mathbf{f}_{\tau}(t))),
\end{eqnarray*}

\noindent and that the successive sizes of the connected components ordered by the exploration coincide with the distances between successive $0$'s in the sequence $Z_{t} = (Z_{t}(k), 0 \leq k \leq \zeta(\tau)+1)$; see Figure \ref{Fig4}.
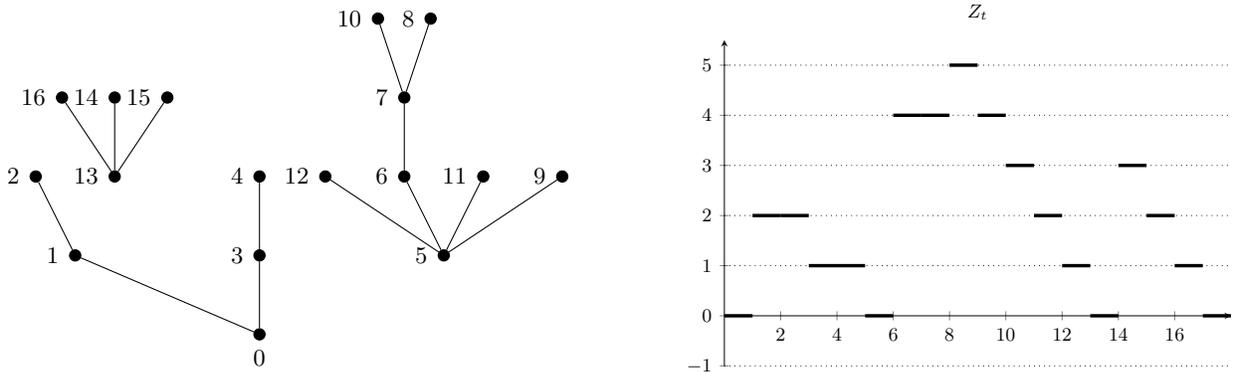
\begin{figure}[!htb]
  \begin{minipage}{0.48\textwidth}
     \centering
      \begin{tikzpicture}[scale=.7,font=\footnotesize]
\tikzstyle{solid node}=[circle,draw,inner sep=1.5,fill=black]
    \tikzstyle{level 1}=[sibling distance=35mm]
    \tikzstyle{level 2}=[sibling distance=15mm]
    \tikzstyle{level 3}=[sibling distance=10mm]
\node[solid node, label=below:{$0$}]{}
[grow=north]
child{node[solid node, label=left:{$5$}]{}
   child{node[solid node, label=left:{$9$}]{}}
   child{node[solid node, label=left:{$11$}]{}}
   child{node[solid node, label=left:{$6$}]{}
     child{node[solid node, label=left:{$7$}]{}
        child{node[solid node, label=left:{$8$}]{}}
        child{node[solid node, label=left:{$10$}]{}}
       }
     }
   child{node[solid node, label=left:{$12$}]{}}
edge from parent[draw=none] }
child{node[solid node, label=left:{$3$}]{}
   child{node[solid node, label=left:{$4$}]{}}
}
child{node[solid node, label=left:{$1$}]{}
   child{node[solid node, label=left:{$13$}]{}
     child{node[solid node, label=left:{$15$}]{}}
     child{node[solid node, label=left:{$14$}]{}}
     child{node[solid node, label=left:{$16$}]{}}
   edge from parent[draw=none] }
   child{node[solid node, label=left:{$2$}]{}}
}
;
\end{tikzpicture}
   \end{minipage}\hfill
  \begin{minipage}{0.48\textwidth}
     \centering
\begin{tikzpicture}[scale=.8,font=\footnotesize]
\begin{axis}[axis x line = middle,axis y line = left, title = $Z_{t}$, width=10cm, height=7cm, xtick={0,2,...,16}, ytick={-1,0,...,4,5}, xmin=0, xmax=18,  ymin=-1, ymax=5.5]
\addplot[domain=0:1, ultra thick] {0};
\addplot[domain=1:2, ultra thick] {2};
\addplot[domain=2:3, ultra thick] {2};
\addplot[domain=3:4, ultra thick] {1};
\addplot[domain=4:5, ultra thick] {1};
\addplot[domain=5:6, ultra thick] {0};
\addplot[domain=6:7, ultra thick] {4};
\addplot[domain=7:8, ultra thick] {4};
\addplot[domain=8:9, ultra thick] {5};
\addplot[domain=9:10, ultra thick] {4};
\addplot[domain=10:11, ultra thick] {3};
\addplot[domain=11:12, ultra thick] {2};
\addplot[domain=12:13, ultra thick] {1};
\addplot[domain=13:14, ultra thick] {0};
\addplot[domain=14:15, ultra thick] {3};
\addplot[domain=15:16, ultra thick] {2};
\addplot[domain=16:17, ultra thick] {1};
\addplot[domain=17:18, ultra thick] {0};
\addplot[domain=0:18, dotted] {-1};
\addplot[domain=0:18, dotted] {1};
\addplot[domain=0:18, dotted] {2};
\addplot[domain=0:18, dotted] {3};
\addplot[domain=0:18, dotted] {4};
\addplot[domain=0:18, dotted] {5};
 \end{axis}
\end{tikzpicture}
   \end{minipage}
\caption{In the left side, the forest of Figure \ref{Fig3}. In the right side, its exploration path $Z_{t}$. The vertices are labelled according to the Prim ordering.}\label{Fig4}
\end{figure}

In this work, and in analogy with the coding paths of $\tau$ introduced earlier, we will consider a slight modification of the exploration path $Z_{t}$. More precisely, define the Prim path $W^{\rm prim}_{t} = (W_{t}^{\rm prim}(k), 0 \leq k \leq \zeta(\tau))$ by letting $W_{t}^{\rm prim}(0) = 0$, and for $0 \leq k \leq \zeta(\tau)-1$, $W_{t}^{\rm prim}(k+1) = W_{t}^{\rm prim}(k) + c_{t}(v_{t}(k))-1$, where $c_{t}(v)$ denotes the number of children of $v \in \mathbf{f}_{\tau}(t)$. We shall also think of such a path as the step function on $[0,\zeta(\tau)]$ given by $s \mapsto W_{t}^{\rm prim}(\lfloor s \rfloor)$. 
\begin{lemma} \label{lemma4}
Let $\tau \in \mathbb{T}$ and $\mathbf{w}$ be a sequence of i.i.d.\ uniform random weights on $[0,1]$. For any time $t \in [0,1]$,
\begin{eqnarray*}
{\rm Card} \left( \left \{k \in \{1, \dots, \zeta(\tau)\}: W_{t}^{\rm prim}(k)= \min_{0 \leq m \leq k} W_{t}^{\rm prim}(m) \right \} \right)= {\rm Card}({\bf CC}(\mathbf{f}_{\tau}(t))), 
\end{eqnarray*}

\noindent Moreover, the successive sizes of the connected components of $\mathbf{f}_{\tau}(t)$ ordered by the exploration process coincide with the distances between successive new minimums in the sequence $(W_{t}^{\rm prim}(k), 0 \leq k \leq \zeta(\tau))$.
\end{lemma}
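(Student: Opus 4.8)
The plan is to relate the modified Prim path $W_t^{\rm prim}$ to the exploration path $Z_t$ of \cite[Lemma 14]{Brou2016} and then transfer the known statements about $Z_t$ to $W_t^{\rm prim}$. The key observation is that both paths encode the same Prim exploration of $\mathbf{f}_\tau(t)$: at each step $k$, the exploration visits the vertex $v_t(k)$ with $c_t(v_t(k))$ children \emph{within} the fragmentation forest, and it moves to a new connected component precisely when the set of yet-unvisited neighbours of the already-explored vertices becomes empty. First I would record the bookkeeping identity governing $Z_t$: as the exploration proceeds through a connected component, $Z_t(k)=\mathrm{Card}(\mathbf{Neigh}_t(V_{k-1}))$ counts the ``active frontier'', it increases by $c_t(v_t(k))$ and decreases by $1$ each time a vertex is consumed, and it hits $0$ exactly at the boundaries between successive components, the gaps between consecutive zeros being the component sizes.

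Next I would make precise the elementary relation between the increments of $W_t^{\rm prim}$ and those of $Z_t$. Since $W_t^{\rm prim}(k+1)-W_t^{\rm prim}(k)=c_t(v_t(k))-1$ while $Z_t(k)-Z_t(k-1)= c_t(v_t(k-1))-1$ as long as the exploration stays inside a component (with a jump up to the size of the next component's root-degree count when a new component starts), one sees that, \emph{within a single connected component explored over the index range $(k_0, k_1]$}, the partial sums telescope so that $W_t^{\rm prim}(k)-W_t^{\rm prim}(k_0) = Z_t(k) - Z_t(k_0) $ shifted appropriately; more robustly, $W_t^{\rm prim}(k)$ restarted from its running minimum behaves exactly like $Z_t$ restarted from $0$. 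Concretely, I would prove by induction on $k$ that $W_t^{\rm prim}(k) - \min_{0\le m\le k} W_t^{\rm prim}(m)$ equals $Z_t(k)$ for all $0\le k\le \zeta(\tau)$ (with the convention identifying the terminal values), using the two cases of the exploration: case (i), $\mathbf{Neigh}_t(V_k)\ne\varnothing$, keeps us inside a component and the common increment $c_t(v_t(k))-1\ge -1$ acts identically on both sides because the running minimum does not strictly decrease; case (ii), $\mathbf{Neigh}_t(V_k)=\varnothing$, is exactly the moment $W_t^{\rm prim}$ attains a new minimum and $Z_t$ returns to $0$, and one checks the increment starting the new component matches. Granting this identity, the set $\{k: W_t^{\rm prim}(k)=\min_{0\le m\le k}W_t^{\rm prim}(m)\}$ coincides with $\{k: Z_t(k)=0\}$, and the distances between successive new minima of $W_t^{\rm prim}$ coincide with the distances between successive zeros of $Z_t$; both conclusions of the lemma then follow immediately from \cite[Lemma 14]{Brou2016}.

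The main obstacle I anticipate is handling the endpoint conventions carefully — the Prim path $W_t^{\rm prim}$ is defined on $\{0,\dots,\zeta(\tau)\}$ and ends at $-\mathrm{Card}(\mathbf{CC}(\mathbf{f}_\tau(t)))$ (each component contributing a net $-1$, analogous to how $W^{\ast}(\zeta(\tau))=-1$ for a single tree), whereas $Z_t$ is defined on $\{0,\dots,\zeta(\tau)+1\}$ with $Z_t(\zeta(\tau)+1)=0$ appended artificially — so the bijection between ``new minima of $W_t^{\rm prim}$'' and ``zeros of $Z_t$'' needs a consistent treatment of the last component, and one must verify that the very last step of the exploration (which closes the final component) is counted correctly on both sides. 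A second, minor technical point is to confirm that case (ii) can only occur when the current frontier $\mathbf{Neigh}_t(V_k)$ is genuinely empty, i.e.\ that the Prim exploration exhausts each connected component completely before jumping — this is exactly the content of the consistency of the Prim order recalled in the text (the order $<_{\rm prim}$ agrees with $\prec_{\rm prim}$ for all $t$), so no new argument is needed, but it should be cited explicitly. Once these endpoint and exhaustion bookkeeping points are settled, the proof is a short induction.
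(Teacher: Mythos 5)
Your overall strategy --- expressing $W_{t}^{\rm prim}$ in terms of the exploration path $Z_{t}$ and then transferring \cite[Lemma 14]{Brou2016} --- is exactly what the paper's one-line proof intends, so the approach is the right one. The problem is the precise identity you propose to prove by induction. The relation $W_{t}^{\rm prim}(k)-\min_{0\leq m\leq k}W_{t}^{\rm prim}(m)=Z_{t}(k)$ is false, and so is the resulting claim that $\{k:W_{t}^{\rm prim}(k)=\min_{0\leq m\leq k}W_{t}^{\rm prim}(m)\}$ coincides with $\{k:Z_{t}(k)=0\}$. The paper's own example (Figures \ref{Fig4} and \ref{Fig5}) is a counterexample: there $Z_{t}$ vanishes at $k=5,13,17$ (three components, of sizes $5,8,4$), whereas $W_{t}^{\rm prim}$ equals its running minimum at $k=3,4,5,12,13,16,17$. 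The point is that $W_{t}^{\rm prim}$ may return to, and tie, its running minimum in the middle of a component without a new component starting; your induction breaks at the first such tie ($k=3$ in the example), and in fact the two sides already differ at $k=1$ (one has $c_{t}(\varnothing)-1$, the other $c_{t}(\varnothing)$) whenever the root is not isolated in $\mathbf{f}_{\tau}(t)$.

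The correct bookkeeping is the following: after $m$ vertices of the current component have been explored, the active frontier has size $1+(\text{total number of their children in } \mathbf{f}_{\tau}(t))-m$, which yields $Z_{t}(k)=W_{t}^{\rm prim}(k)-\min_{0\leq m\leq k-1}W_{t}^{\rm prim}(m)+1$ for $1\leq k\leq\zeta(\tau)$. Consequently $Z_{t}(k)=0$ if and only if $W_{t}^{\rm prim}(k)<\min_{0\leq m\leq k-1}W_{t}^{\rm prim}(m)$; since the down-steps of $W_{t}^{\rm prim}$ have size exactly $1$, these are the first passage times of $W_{t}^{\rm prim}$ to the levels $-1,-2,\dots,-{\rm Card}(\mathbf{CC}(\mathbf{f}_{\tau}(t)))$, i.e.\ the \emph{strict} new minima, not all times at which the running minimum is attained. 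With this corrected identity your induction (same two cases) goes through and the transfer of \cite[Lemma 14]{Brou2016} is immediate; your remarks on the endpoint conventions and on the exhaustion of components are then exactly the right things to check. Note that the displayed formula in the lemma itself has the same defect and must be read with the strict inequality, consistently with the phrase ``successive new minimums'' in its second sentence; you should state and prove that version rather than the literal one.
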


\begin{proof}
The result is an immediate consequence of the previous discussion. 
\end{proof}

Indeed, the sizes of the connected components of $\mathbf{f}_{\tau}(t)$ coincides with the length of the excursions of the walk $W_{t}^{\rm prim}$ above its minimum; see Figure \ref{Fig5}. 

\begin{figure}[!htb]
  \begin{minipage}{0.48\textwidth}
     \centering
      \begin{tikzpicture}[scale=.7,font=\footnotesize]
\tikzstyle{solid node}=[circle,draw,inner sep=1.5,fill=black]
    \tikzstyle{level 1}=[sibling distance=35mm]
    \tikzstyle{level 2}=[sibling distance=15mm]
    \tikzstyle{level 3}=[sibling distance=10mm]
\node[solid node, label=below:{$0$}]{}
[grow=north]
child{node[solid node, label=left:{$5$}]{}
   child{node[solid node, label=left:{$9$}]{}}
   child{node[solid node, label=left:{$11$}]{}}
   child{node[solid node, label=left:{$6$}]{}
     child{node[solid node, label=left:{$7$}]{}
        child{node[solid node, label=left:{$8$}]{}}
        child{node[solid node, label=left:{$10$}]{}}
       }
     }
   child{node[solid node, label=left:{$12$}]{}}
edge from parent[draw=none] }
child{node[solid node, label=left:{$3$}]{}
   child{node[solid node, label=left:{$4$}]{}}
}
child{node[solid node, label=left:{$1$}]{}
   child{node[solid node, label=left:{$13$}]{}
     child{node[solid node, label=left:{$15$}]{}}
     child{node[solid node, label=left:{$14$}]{}}
     child{node[solid node, label=left:{$16$}]{}}
   edge from parent[draw=none] }
   child{node[solid node, label=left:{$2$}]{}}
}
;
\end{tikzpicture}
   \end{minipage}\hfill
  \begin{minipage}{0.48\textwidth}
     \centering
\begin{tikzpicture}[scale=.8,font=\footnotesize]
\begin{axis}[axis x line = middle,axis y line = left, title = $W_{t}^{\text{prim}}$, width=10cm, height=7cm, xtick={0,2,...,16}, ytick={-3,-2,-1,0,1,2,3}, xmin=0, xmax=18,  ymin=-3, ymax=3.5]
\addplot[domain=0:1, ultra thick] {0};
\addplot[domain=1:2, ultra thick] {1};
\addplot[domain=2:3, ultra thick] {1};
\addplot[domain=3:4, ultra thick] {0};
\addplot[domain=4:5, ultra thick] {0};
\addplot[domain=5:6, ultra thick] {-1};
\addplot[domain=6:7, ultra thick] {2};
\addplot[domain=7:8, ultra thick] {2};
\addplot[domain=8:9, ultra thick] {3};
\addplot[domain=9:10, ultra thick] {2};
\addplot[domain=10:11, ultra thick] {1};
\addplot[domain=11:12, ultra thick] {0};
\addplot[domain=12:13, ultra thick] {-1};
\addplot[domain=13:14, ultra thick] {-2};
\addplot[domain=14:15, ultra thick] {0};
\addplot[domain=15:16, ultra thick] {-1};
\addplot[domain=16:17, ultra thick] {-2};
\addplot[domain=17:18, ultra thick] {-3};
\addplot[domain=0:18, dotted] {2};
\addplot[domain=0:18, dotted] {1};
\addplot[domain=0:18, dotted] {-1};
\addplot[domain=0:18, dotted] {-2};
\addplot[domain=0:18, dotted] {-3};
 \end{axis}
\end{tikzpicture}
   \end{minipage}
\caption{In the left side, the forest of Figure \ref{Fig3} with vertices labelled according to the Prime ordering. In the right side, its Prim path $W_{t}^{\text{prim}}$. }\label{Fig5}
\end{figure}
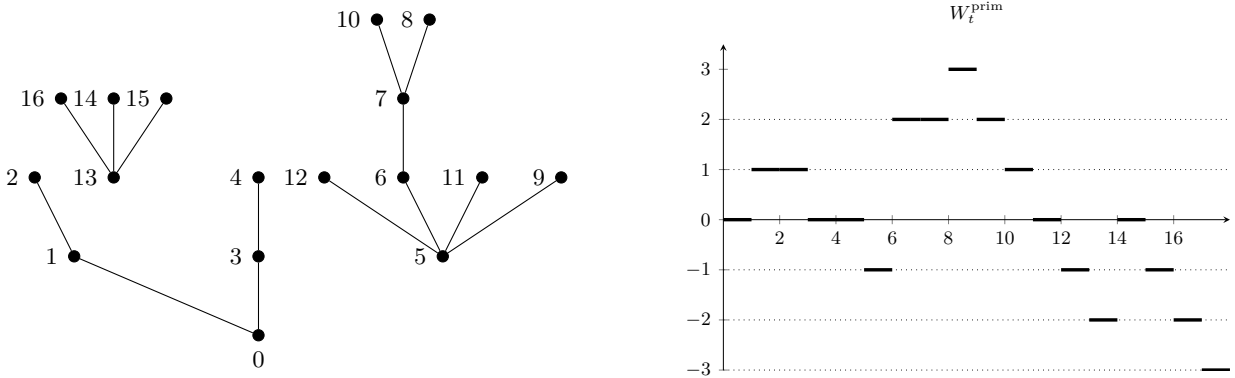

Following Proposition \ref{Pro1}, the Prim path of the fragmentation forest associated to a critical Galton--Watson tree with offspring distribution $\mu$ can also be related to a random walk. Recall that $X = (X(k), k \geq 0)$ denotes a random walk that starts at $0$ and has jump distribution $\hat{\mu}$ on $\{-1,0,1, \dots\}$. Recall also that we write $\zeta_{1} = \inf\{k \geq 0: X(k)= -1 \}$. Denote by ${\bm \xi} = (\xi(k), k \geq 1)$ the increments of $X$, i.e.\ $\xi(k) = X(k)-X(k-1)$, for $k\geq 1$. Let $(U_{k}(j))_{k,j \geq 1}$ be a sequence of i.i.d.\ uniform random variables on $[0,1]$. For $t \in [0,1]$, define ${\bm \xi}_{t} = (\xi_{t}(k), k \geq 1)$ by letting 
\begin{eqnarray*}
\xi_{t}(k) = \sum_{j=1}^{\xi(k)+1} \mathds{1}_{\{ U_{k}(j) \leq t \}}, \hspace*{3mm} \text{for} \hspace*{2mm} t \in [0,1], \hspace*{2mm} k \geq 1,
\end{eqnarray*}

\noindent with the convention $\sum_{j=1}^{0} \mathds{1}_{\{ U_{k}(j) \leq t \}} = 0$. Hence, $\xi_{0}(k) = 0$, $\xi_{1}(k) = \xi(k)+1$ and for any $k \geq 1$, the mapping $t \mapsto \xi_{t}(k)$ is non-decreasing. Let $X_{t} = (X_{t}(k), k \geq 0)$ be the process defined by
\begin{eqnarray} \label{NRwalk}
X_{t}(0)=0 \hspace*{3mm} \text{and} \hspace*{3mm} X_{t}(k) = \sum_{i=1}^{k} (\xi_{t}(i)-1), \hspace*{3mm} \text{for} \hspace*{2mm} t \in [0,1], \hspace*{2mm} k \geq 1.
\end{eqnarray}
\begin{proposition} \label{Pro2}
Sample a plane tree $\mathbf{t}$ according to $\mathbb{P}_{\mu}$, i.e., consider a critical Galton--Watson tree $\mathbf{t}$ with offspring $\mu$. Let $\mathbf{w} = (w_{e}: e \in {\bf edge}(\mathbf{t}))$ be a sequence of i.i.d.\ uniform random weights on $[0,1]$. Then, the Prim path $W_{t}^{{\rm prim}}$ satisfies
\begin{eqnarray*}
(W_{t}^{{\rm prim}}(0), W_{t}^{{\rm prim}}(1), \dots, W_{t}^{{\rm prim}}(\zeta(\mathbf{t})))_{t \in [0,1]} \stackrel{d}{=} (X_{t}(0), X_{t}(1), \dots, X_{t}(\zeta_{1}))_{t \in [0,1]},
\end{eqnarray*}
\noindent where $\stackrel{d}{=}$ means equality of finite-dimensional distributions. 
\end{proposition}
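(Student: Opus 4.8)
The plan is to establish the identity in law by combining Proposition \ref{Pro1} with a description of how the fragmentation weights act on the coding paths. The key observation is that the Prim path of the fragmentation forest $\mathbf{f}_{\mathbf{t}}(t)$ is obtained from the (unfragmented) Prim path of $\mathbf{t}$ by a purely local modification of the increments: at step $k$, the vertex $v(k)$ visited in Prim order has, in $\mathbf{f}_{\mathbf{t}}(t)$, only those children of $v(k)$ (in $\mathbf{t}$) whose connecting edge has weight $\leq t$, together with the new connected-component structure. Since the Prim ordering of the vertices of $\mathbf{f}_{\mathbf{t}}(t)$ is \emph{the same} as the Prim ordering of $\mathbf{t}$ (this is the consistency property of Prim's algorithm explained before Lemma \ref{lemma4}), the vertex explored at step $k$ does not depend on $t$; only the count of its retained children does. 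Thus if $c(v(k))$ denotes the number of children of $v(k)$ in $\mathbf{t}$, then $c_{t}(v(k)) = \sum_{j=1}^{c(v(k))} \mathds{1}_{\{ w_{v(k),j} \leq t\}}$, and the increment of the Prim path at step $k$ is $c_{t}(v(k)) - 1 = \sum_{j=1}^{c(v(k))+1-1} \mathds{1}_{\{w_{v(k),j}\leq t\}} - 1$, which is exactly the discrete analogue of $\xi_{t}(k)-1$ once one reindexes children by i.i.d.\ uniforms.

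First I would make precise the claim that the increments of $W^{\mathrm{prim}}_{t}$ are obtained from the increments of $W^{\mathrm{prim}} = W^{\mathrm{prim}}_{1}$ by independent uniform thinning. Concretely, by Proposition \ref{Pro1} the Prim path $W^{\mathrm{prim}}$ of $\mathbf{t}$ is distributed as $(X(0),\dots,X(\zeta_{1}))$, whose increments $\xi(k) = X(k)-X(k-1)$ satisfy $\xi(k)+1 \sim \mu$ and correspond to the number of children $c(v(k-1))$ of the $(k-1)$-st vertex in Prim order. Attaching to each edge of $\mathbf{t}$ an independent uniform weight on $[0,1]$ amounts, in the coding by $(X, \mathbf{w})$, to attaching to the $j$-th child of $v(k-1)$ (for $1 \leq j \leq \xi(k)+1$) an independent uniform variable $U_{k}(j)$; and keeping the edge at time $t$ means keeping the indicator $\mathds{1}_{\{U_{k}(j)\leq t\}}$. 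Hence the increment of $W^{\mathrm{prim}}_{t}$ at step $k$ is $c_{t}(v(k-1)) - 1 = \sum_{j=1}^{\xi(k)+1}\mathds{1}_{\{U_{k}(j)\leq t\}} - 1 = \xi_{t}(k)-1$, jointly over all $t \in [0,1]$ and all $k$, and the partial sums give $W^{\mathrm{prim}}_{t}(k) = X_{t}(k)$ in the notation of \eqref{NRwalk}. The stopping time $\zeta(\mathbf{t})$ — the first step at which the Prim path hits $-1$, since the Prim path behaves exactly like the {\L}ukasiewicz path from the progeny-counting viewpoint (the $-1$ final value and the fact that $W^{\mathrm{prim}}(k)\geq 0$ for $k < \zeta(\tau)$, noted after Figure \ref{Fig1}) — corresponds to $\zeta_{1} = \inf\{k\geq 0: X(k)=-1\}$; crucially this is $\zeta_{1} = \inf\{k : X_{1}(k)=-1\}$, which since $\xi_{1}(k) = \xi(k)+1$ means $X_{1} = X$, so the stopping time is read off the \emph{unfragmented} walk and does not depend on $t$. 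This consistency of the domain of the path in $t$ is exactly why the consistency of the Prim order matters.

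The actual argument I would write is short: invoke Proposition \ref{Pro1} for the $\mathrm{prim}$ ordering to get a coupling in which $(W^{\mathrm{prim}}(0),\dots,W^{\mathrm{prim}}(\zeta(\mathbf{t}))) = (X(0),\dots,X(\zeta_{1}))$ together with the identification of the $k$-th increment with $c(v(k-1))-1$; then enrich the probability space with the i.i.d.\ uniform array $(U_{k}(j))_{k,j\geq 1}$, reconstruct the edge weights of $\mathbf{t}$ from this array via the bijection ``$j$-th child of $v(k-1)$ in Prim order $\leftrightarrow$ $U_{k}(j)$'', observe that this array is independent of $\mathbf{t}$ (hence of $X$ and $\zeta_{1}$) and has the right marginal law so that the reconstructed weights are i.i.d.\ uniform and independent of $\mathbf{t}$; and finally compute, using Lemma \ref{lemma4}'s defining recursion $W^{\mathrm{prim}}_{t}(k+1) = W^{\mathrm{prim}}_{t}(k) + c_{t}(v_{t}(k)) - 1$ together with $v_{t}(k) = v(k)$ and $c_{t}(v(k)) = \sum_{j=1}^{\xi(k+1)+1}\mathds{1}_{\{U_{k+1}(j)\leq t\}} = \xi_{t}(k+1)$, that $W^{\mathrm{prim}}_{t}(k) = X_{t}(k)$ for all $k \leq \zeta(\mathbf{t}) = \zeta_{1}$ and all $t\in[0,1]$ simultaneously. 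This gives the finite-dimensional equality in distribution.

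\textbf{Main obstacle.} I expect the only genuinely delicate point to be the bookkeeping behind the claim that the Prim ordering of $\mathbf{f}_{\mathbf{t}}(t)$ coincides with that of $\mathbf{t}$ \emph{and} that, under this identification, the edge from $v(k-1)$ to its $j$-th child (in the tree $\mathbf{t}$) can be consistently labelled by $U_{k}(j)$ independently across $k$ — in other words, that the "which child" indexing used to define $\xi_{t}$ is compatible with the exploration. This is the content of the discussion preceding Lemma \ref{lemma4} (consistency of minimum spanning trees under Prim's algorithm: removing an edge that is itself a "kept" edge never changes the order in which vertices are first reached), but one must be careful that the uniform weights used to \emph{define} the fragmentation are the same family used — via Prim's algorithm with distinct a.s.\ values — to \emph{define} the Prim order, so that no circularity or hidden dependence is introduced; since all these weights are i.i.d.\ uniform and the map from (tree, weight array) to (Prim order, child-indexed weight array) is a deterministic bijection, there is no such problem, but it deserves an explicit sentence. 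Everything else is a routine unwinding of the definitions in \eqref{NRwalk}.
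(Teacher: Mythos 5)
Your proposal follows the same overall strategy as the paper (combine Proposition \ref{Pro1} with the observation that the increments of $W_{t}^{\mathrm{prim}}$ are thinnings of the increments of $W^{\mathrm{prim}}$ by the edge weights), but the coupling argument in your last step has a genuine gap — precisely at the point you flag as the ``main obstacle''. The claim that the weights, re-indexed by (Prim-ordered vertex, child index), form an i.i.d.\ uniform array that can be replaced by a fresh independent array $(U_{k}(j))$ is false, and the appeal to a ``deterministic bijection'' does not rescue it: the map $(\mathbf{t},\mathbf{w}) \mapsto (\mathbf{t}, \text{Prim order}, \text{child-indexed weights})$ is a bijection onto a strict subset of the naive product space (only the mutually \emph{consistent} pairs), and the pushforward of the i.i.d.\ law is not a product measure on that image. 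Equivalently, if you ``reconstruct'' $\mathbf{w}$ from a fresh array via $w_{v(k-1),j} := U_{k}(j)$, the Prim order you would recompute from the reconstructed weights will generically disagree with the Prim order you used to do the indexing, so no coupling is actually produced.

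To see the gap concretely, take the tree $\tau = \{\varnothing, 1, 2, 11\}$ with $a = w_{\{\varnothing,1\}}$, $b = w_{\{\varnothing,2\}}$, $c = w_{\{1,11\}}$ i.i.d.\ uniform. The Prim order is $(\varnothing, 1, 2, 11)$ exactly on the event $\{a < b < c\}$, and conditionally on that event $c$ is the maximum of three i.i.d.\ uniforms, so
\begin{equation*}
\mathbb{E}\bigl[\,c_{t}(v(1))\mid \text{Prim order} = (\varnothing,1,2,11)\bigr] = \mathbb{P}\bigl(c \le t \mid a<b<c\bigr) = t^{3} \neq t = \mathbb{E}\bigl[\mathds{1}_{\{U_{2}(1)\le t\}}\bigr].
\end{equation*}
Thus the child-indexed weights are \emph{not} i.i.d.\ uniform given the Prim order, and the pathwise identity $c_{t}(v(k)) = \xi_{t}(k+1)$ you want cannot be realized with the Prim order held fixed. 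The distributional identity of the proposition is nevertheless true, but only because the bias cancels \emph{after} averaging over the different Prim orders that produce a given sequence of child counts: in the example, summing the Prim order $(\varnothing,1,11,2)$ for $\tau$ together with the two admissible Prim orders of the mirror tree $\{\varnothing,1,2,21\}$, all with child-count sequence $(2,1,0,0)$, restores the conditional mean $t$ exactly. This averaging is the mathematical content that must be supplied, and it is missing from your argument. (For what it is worth, the paper's own proof asserts the factorization display for a \emph{fixed} Prim-order event $\mathbf{N}(v_{0},\dots,v_{r-1})$, which fails for the same reason; the factorization only holds after summing over the $v_{p}$'s compatible with $\mathbf{C}(k_{0},\dots,k_{r-1})$, so the step needs more care than the phrase ``since the weights are independent of the tree'' suggests. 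Your proposal and the paper thus share the same subtle leap, but stating the claim as an explicit coupling as you do makes the failure sharper and easier to falsify.)
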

\begin{proof}
For $t \in [0,1]$, we write $V(0) = \varnothing, V(1), \dots, V(\zeta(\mathbf{t})-1)$ for the vertices of $\mathbf{f}_{\mathbf{t}}(t)$ listed in Prim order. To simplify the notation, for $t \in [0,1]$ and $k = 0, \dots, \zeta(\tau)-1$, we will write $c_{t}(V(k)) = c_{t}(k)$ for the number of children of the vertex $V(k)$ in $\mathbf{f}_{\mathbf{t}}(t)$. Recall that $\mathbf{f}_{\mathbf{t}}(1) = \mathbf{t}$ and $c_{1}(V(k)) = c(V(k))$. In particular, we will write $c(k) = c(V(k))$. To prove our claim, it is enough to check that 
\begin{eqnarray*}
(c_{t}(0), c_{t}(1), \dots, c_{t}(\zeta(\mathbf{t})-1))_{t \in [0,1]} \stackrel{d}{=} (\xi_{t}(1), \dots, \xi_{t}(\zeta_{1}))_{t \in [0,1]},
\end{eqnarray*}

\noindent in the sense of finite-dimensional distributions. 

Consider the infinite tree $\mathbb{U}$ and denote by $\textbf{edge}(\mathbb{U})$ its set of edges. Denote by $\textbf{Neigh}(v_{0}) \coloneqq \{u \in \mathbb{U}: \{u,v_{0}\} \in \textbf{edge}(\mathbb{U})\}$ the set of neighbours of $v_{0} \in \mathbb{U}$. For $r \in \mathbb{N}$ and a set of vertices $S_{r} \coloneqq \{v_{0}, v_{1}, \dots, v_{r-1}\}$ of $\mathbb{U}$, we also write $\textbf{Neigh}(S_{r}) \coloneqq ( \bigcup_{v \in S_{r}} \textbf{Neigh}(v)) \setminus S_{r}$ for the set of neighbours of vertices in $S_{r}$ but not in $S_{r}$. For $v_{0} = \varnothing, v_{1} \in \textbf{Neigh}(v_{0}), \dots, v_{r} \in \textbf{Neigh}(\{v_{0}, v_{1}, \dots, v_{r-1}\})$, define the event 
\begin{eqnarray*}
\textbf{N}(v_{0}, v_{1}, \dots, v_{r-1}) \coloneqq \{V(0) = v_{0}, V(1) = v_{1}, \dots, V(\zeta(\mathbf{t}) -1) = v_{r} \} \cap \{ \zeta(\mathbf{t}) = r \}. 
\end{eqnarray*}

\noindent For $r \in \mathbb{N}$ and $k_{0}, k_{1}, \dots, k_{r-1} \in \mathbb{N} \cup \{0\}$, we also define the event
\begin{eqnarray*}
\textbf{C}(k_{0}, k_{1}, \dots, k_{r-1}) \coloneqq \{c(0) = k_{0}, c(1) = k_{1}, \dots, c(\zeta(\mathbf{t}) -1) = k_{r} \} \cap \{ \zeta(\mathbf{t}) = r \}. 
\end{eqnarray*}

\noindent For simplicity, given a measurable set $A$, we write $\mathbb{E}[ \cdot; A] = \mathbb{E}[ \cdot \mathds{1}_{A}]$, and given a finite collection of measurable sets $A_{1}, \dots, A_{i}$, we shall write $\mathbb{E}[ \cdot; A_{1}, \dots, A_{i}] = \mathbb{E}[ \cdot \mathds{1}_{A_{i} \cap \dots \cap A_{i}}]$, for $i \in \mathbb{N}$. For fixed $n \in \mathbb{N}$, we set $0 \leq t_{1} \leq \cdots \leq t_{n} \leq 1$, and for $i =1,2, \dots, n$ and $r \in \mathbb{N}$, consider $g_{0}^{i}, g_{1}^{i}, \dots g_{r-1}^{i}$ nonnegative functions on $\{0,1,\dots \}$. Hence,
\begin{align*}
& \mathbb{E}\left[ \prod_{i=1}^{n} g_{0}^{i}(c_{t_{i}}(0))  g_{1}^{i}(c_{t_{i}}(1))\cdots g_{r-1}^{i}(c_{t_{i}}(\zeta(\mathbf{t})-1)); \textbf{N}(v_{0}, v_{1}, \dots, v_{r-1}),  \textbf{C}(k_{0}, k_{1}, \dots, k_{r-1}),  \zeta(\mathbf{t}) = r \right]  \\
& \hspace*{3mm} = \mathbb{E}\left[ \prod_{i=1}^{n} g_{0}^{i}(c_{t_{i}}(v_{0}))  g_{1}^{i}(c_{t_{i}}(v_{1}))\cdots g_{r-1}^{i}(c_{t_{i}}(v_{r-1})); \textbf{N}(v_{0}, v_{1}, \dots, v_{r-1}),  \textbf{C}(k_{0}, k_{1}, \dots, k_{r-1}),  \zeta(\mathbf{t}) = r \right].
\end{align*}

\noindent For $t \in [0,1]$ and $p = 0, \dots, \zeta(\tau)-1$, recall that if $c(p) \geq 1$, then $c_{t}(p) = \sum_{i=1}^{c(p)} \mathds{1}_{\{ w_{V(p),i} \leq t \}}$. Otherwise, $c_{t}(p) = 0$ whenever $c(p)=0$. Then, in the event $\textbf{N}(v_{0}, v_{1}, \dots, v_{r-1}) \cap  \textbf{C}(k_{0}, k_{1}, \dots, k_{r-1}) \cap \{ \zeta(\mathbf{t}) = r \}$, we have that $c_{t}(v_{p}) = \sum_{i=1}^{k_{p}} \mathds{1}_{\{ w_{v_{p},i} \leq t \}}$; with the convention that the sum is equal to zero if it is empty. Define the random variables, $\kappa_{t}(p) =  \sum_{j=1}^{k_{p}} \mathds{1}_{\{ U_{p+1}(j) \leq t \}}$; with the convention that the sum is equal to zero whenever it is empty. We then see that 
\begin{align*}
& \mathbb{E}\left[ \prod_{i=1}^{n} g_{0}^{i}(c_{t_{i}}(0))  g_{1}^{i}(c_{t_{i}}(1))\cdots g_{r-1}^{i}(c_{t_{i}}(\zeta(\mathbf{t})-1)); \textbf{N}(v_{0}, v_{1}, \dots, v_{r-1}),  \textbf{C}(k_{0}, k_{1}, \dots, k_{r-1}),  \zeta(\mathbf{t}) = r \right]  \\
& \hspace*{3mm} = \prod_{p=0}^{r-1}\mathbb{E}\left[ g_{p}^{1}(\kappa_{t_{1}}(p)) \cdots g_{p}^{n}(\kappa_{t_{n}}(p)) \right] \mathbb{P}( \textbf{N}(v_{0}, v_{1}, \dots, v_{r-1}) \cap \textbf{C}(k_{0}, k_{1}, \dots, k_{r-1}) \cap  \zeta(\mathbf{t}) = r ).
\end{align*}

\noindent Therefore, by summing over all possible, $k_{p}$'s and $v_{p}$'s, Proposition \ref{Pro1}
implies that
\begin{eqnarray*}
\mathbb{E}\left[ \prod_{i=1}^{n} g_{0}^{i}(c_{t_{i}}(0))  g_{1}^{i}(c_{t_{i}}(1))\cdots g_{r -1}^{i}(c_{t_{i}}(r -1)); \zeta(\mathbf{t}) = r \right] = \prod_{k=0}^{r -1} \mathbb{E}\left[ g_{k}^{1}(\xi_{t_{1}}(k+1)) \cdots g_{k}^{n}(\xi_{t_{n}}(k+1)); \zeta_{1} = r \right],
\end{eqnarray*}

\noindent which concludes our proof. 
\end{proof}

\section{Convergence of the exploration processes} \label{Sec4}

Recall that $\mathbb{P}^{(n)}_{\mu}$ denotes the law of a critical Galton--Watson tree with offspring distribution $\mu$ conditioned to have $n \in \mathbb{N}$ vertices. For every $n \in \mathbb{N}$, for which $\mathbb{P}^{(n)}_{\mu}$ is well-defined, sample a plane tree on $\mathbb{T}_{n}$, say $\mathbf{t}_{n}$, according to $\mathbb{P}^{(n)}_{\mu}$, i.e., $\mathbf{t}_{n}$ is a critical Galton--Watson tree conditioned to have $n$ vertices. Through this section we assume that $\mu$ belongs to the domain of attraction of a stable law of index $\alpha \in (1,2]$, and refer to $\mathbf{t}_{n}$ as an $\alpha$-stable $\text{GW}$-tree. We will always let $\mathbf{w} = (w_{e}: e \in {\bf edge}(\mathbf{t}_{n}))$ be a sequence of i.i.d.\ uniform random weights on $[0,1]$. We write $W_{n}^{\rm lex} = (W_{n}^{\rm lex}( \lfloor n u \rfloor ), u \in [0,1])$ for the associated time-scaled Łukasiewicz path of $\mathbf{t}_{n}$. We also write  $W_{n}^{\rm prim} = (W_{n}^{\rm prim}( \lfloor n u \rfloor ), u \in [0,1])$ for the time-scaled Prim path of $\mathbf{t}_{n}$ with respect to $\mathbf{w}$.

The asymptotic behavior of large $\alpha$-stable $\text{GW}$-trees is well understood, in particular through scaling limits of their associated Łukasiewicz paths; see, e.g., \cite{Du2003}. In this section, we first show that the Prim path of $\mathbf{t}_{n}$ has the same asymptotic behavior as its Łukasiewicz path. Then, we use this as a stepping stone to study the Prim path of the fragmentation forest of $\mathbf{t}_{n}$ associated to the weights $\mathbf{w}$. Recall that $X_{\alpha}^{\rm exc} = (X_{\alpha}^{\rm exc}(u), u \in [0,1] )$ denotes the $\alpha$-stable excursion of index $\alpha$; see Section \ref{Sec2}.
\begin{theorem} \label{Theo1}
Let $\mathbf{t}_{n}$ be an $\alpha$-stable ${\rm GW}$-tree, and let $(B_{n})_{n \geq 1}$ be a sequence of positive real numbers satisfying (\ref{eq10}). For $\ast \in \{ {\rm lex}, {\rm prim} \}$, we have that
\begin{eqnarray*} 
\left( \frac{1}{B_{n}} W_{n}^{\ast}(\lfloor nu \rfloor) , u \in [0,1]  \right) \xrightarrow[ ]{d} ( X_{\alpha}^{\rm exc}(u), u \in [0,1] ), \hspace*{3mm} \text{as} \hspace*{2mm}  n \rightarrow \infty, \hspace*{2mm} \text{in the space} \hspace*{2mm} \mathbb{D}([0,1], \mathbb{R}).
\end{eqnarray*}
\end{theorem}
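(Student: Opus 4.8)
The plan is to reduce the claim for $\ast={\rm prim}$ to the (already well-understood) case $\ast={\rm lex}$ by exploiting the distributional identity of Proposition~\ref{Pro1}: both $W_n^{\rm lex}$ and $W_n^{\rm prim}$, unconditioned, have the law of the random walk $X$ with step distribution $\hat\mu$ run until its first hitting time $\zeta_1$ of $-1$. Hence, conditioning on $\{\zeta(\mathbf{t}_n)=n\}=\{\zeta_1=n\}$, the time-scaled paths $\big(B_n^{-1}W_n^{\ast}(\lfloor nu\rfloor),u\in[0,1]\big)$ have \emph{exactly the same law} for $\ast={\rm lex}$ and $\ast={\rm prim}$. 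Therefore it suffices to prove the statement for the \L ukasiewicz path. First I would recall that since $\mu$ is in the domain of attraction of a stable law of index $\alpha$ and is aperiodic, the local limit theorem gives $\mathbb{P}(\zeta_1=n)\sim c\, n^{-1-1/\alpha}B_n/n$ (equivalently, $\mathbb{P}(X(n)=-1)$ behaves like $B_n^{-1}$ times the stable density at $0$), so the conditioning event has polynomially small but nonzero probability and Proposition~\ref{Pro1} makes $\mathbb{P}_\mu^{(n)}$ well-defined for all large $n$.

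For the \L ukasiewicz path itself I would invoke the standard scaling-limit result for conditioned stable Galton--Watson trees: under $\mathbb{P}_\mu^{(n)}$,
\begin{eqnarray*}
\left(\frac{1}{B_n}W_n^{\rm lex}(\lfloor nu\rfloor),\,u\in[0,1]\right)\xrightarrow[n\to\infty]{d}\big(X_\alpha^{\rm exc}(u),\,u\in[0,1]\big)\quad\text{in }\mathbb{D}([0,1],\mathbb{R}),
\end{eqnarray*}
which is due to Duquesne (see \cite{Du2003}, building on \cite{DuLegall2002}); the intuition is that the unconditioned walk $(B_n^{-1}X(\lfloor nu\rfloor))$ converges to the spectrally positive $\alpha$-stable process $X_\alpha$ by the invariance principle \eqref{eq10}, the bridge version converges to $X_\alpha^{\rm br}$ by conditioning $X(n)=-1$ (justified via the local limit theorem and absolute-continuity/Vervaat-type arguments), and the excursion version $X_\alpha^{\rm exc}$ appears after the cyclic Vervaat shift of Section~\ref{Sec2}, which for the \L ukasiewicz path corresponds to re-rooting the tree at its first ancestor of minimal label—an operation that does not change the law under $\mathbb{P}_\mu^{(n)}$. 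Combining this with the distributional equality noted above yields the theorem for $\ast={\rm prim}$ as well, and the convergence holds jointly with no further work since the two rescaled paths are literally equal in law.

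The one genuinely delicate point—and the place I expect to spend the most care—is the passage from the \emph{unconditioned} invariance principle to the \emph{conditioned} one: convergence of $(B_n^{-1}X(\lfloor nu\rfloor))$ to $X_\alpha$ does not automatically survive conditioning on the null event $\{\zeta_1=n\}$. The rigorous route is (i) to establish convergence of the \emph{bridge} $\big(B_n^{-1}X(\lfloor nu\rfloor)\ \big|\ X(n)=-1\big)$ to $X_\alpha^{\rm br}$, which requires an absolute-continuity argument on a time interval $[0,1-\varepsilon]$ together with a local limit theorem controlling the Radon--Nikodym density $\mathbb{P}(X(n)-X(\lfloor nu\rfloor)=\text{something})/\mathbb{P}(X(n)=-1)$ uniformly, plus a tightness estimate on $[1-\varepsilon,1]$; and (ii) to transfer from the bridge to the excursion via the discrete Vervaat transform, using the fact (cf.\ \eqref{eq17}) that the shift index is asymptotically uniform and independent of the excursion. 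All of these ingredients are by now classical for stable Galton--Watson trees, so in the write-up I would state the \L ukasiewicz convergence as a cited result from \cite{Du2003, DuLegall2002} and devote the proof essentially to the (short) reduction via Proposition~\ref{Pro1}. One should also check that the initial/terminal corrections—$W_n^\ast(0)=0$ versus $W_n^\ast(n)=-1$, and the value at the single ``extra'' step—are of size $O(1)=o(B_n)$ and hence invisible in the limit, which is immediate.
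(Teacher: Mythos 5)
Your proposal is correct and follows the same route as the paper: the authors likewise cite Duquesne's Theorem 3.1 in \cite{Du2003} for the \L ukasiewicz path and then deduce the Prim case from Proposition~\ref{Pro1}, since conditioning on $\{\zeta_{1}=n\}$ (an event measurable with respect to the path) preserves the distributional identity between the two coding paths. Your additional sketch of how the conditioned invariance principle is established (local limit theorem, bridge, Vervaat transform) is accurate background on the cited result but is not reproduced in the paper's proof.
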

\begin{proof}
The proof for the Łukasiewicz path can be found in \cite[Theorem 3.1]{Du2003}. For the Prim path the result follows from that of the Łukasiewicz path and Proposition \ref{Pro1}. 
\end{proof}

For $s \in [0,1]$, let $\mathbf{f}_{n}(s)$ be the fragmentation forest of $\mathbf{t}_{n}$ at time $s$. Denote by $W_{n,s}^{\rm prim} = (W_{n,s}^{\rm prim}(\lfloor nu \rfloor), u \in [0,1])$ the time-scaled Prim path of $\mathbf{f}_{n}(s)$. In particular, $W_{n,1}^{\rm prim}$ is exactly $W_{n}^{\rm prim}$. For fixed $t \geq 0$, consider the sequence $(s_{n}(t))_{n \geq 1}$ of positive times given by
\begin{eqnarray*}
s_{n}(t) = \max \left(1-\frac{B_{n}}{n}t, 0 \right),
\end{eqnarray*} 

\noindent where $(B_{n})_{n \geq 1}$ a sequence of positive real numbers satisfying (\ref{eq10}). Define the process $W_{n}^{(t)} = (W_{n}^{(t)}(u),u \in [0,1])$ by letting
\begin{eqnarray} \label{eq13}
W_{n}^{(t)}(u) =  \frac{1}{B_{n}} W_{n,s_{n}(t)}^{\rm prim}(\lfloor nu \rfloor), \hspace*{4mm} \text{for} \hspace*{2mm} u \in [0,1]. 
\end{eqnarray}

\noindent Later, in the proof of Theorem \ref{Theo3}, we will refer to the process $W_{n}^{(t)}$ as the (normalized and time-scaled) Prim path of the fragmentation forest at time $s_{n}(t)$, i.e., $\mathbf{f}(s_{n}(t))$. We then set $W_{n} = (W_{n}^{(t)}, t \geq 0)$. From the previous section, the mapping $t \mapsto W_{n}^{(t)}$ is non-increasing in $t$ which implies that the process $W_{n}$ has c\`adl\`ag paths. Thus, we will view $(t, u) \mapsto W_{n}^{(t)}(u) $ as a random variable taking values in the space $\mathbb{D}(\mathbb{R}_{+}, \mathbb{D}([0,1], \mathbb{R}))$ of $\mathbb{D}([0,1], \mathbb{R})$-valued c\`adl\`ag functions on $\mathbb{R}$ equipped with the Skorokhod topology. In other words, for fixed $t \geq 0$, $W_{n}^{(t)}$ is a random variable in $\mathbb{D}([0,1], \mathbb{R})$.

We introduce the continuous counterpart of the process $W_{n}$.  For every $t \geq 0$, let $Y_{\alpha}^{(t)} = (Y_{\alpha}^{(t)}(u), u \in [0,1])$ be defined by $Y_{\alpha}^{(t)}(u) = X_{\alpha}^{\rm exc}(u) - tu$, for $u \in [0,1]$. In particular, for $t=0$, $Y_{\alpha}^{(0)} = X_{\alpha}^{\rm exc}$ and we sometimes write $X_{\alpha}^{\rm exc}$ instead of $Y_{\alpha}^{(0)}$, for simplicity. Then, define the process $Y_{\alpha} = (Y_{\alpha}^{(t)}, t \geq 0)$.

The following theorem is the main result of this section.
\begin{theorem} \label{Theo2}
We have the convergence 
\begin{eqnarray*} 
(W_{n}^{(t)}, t \geq 0) \xrightarrow[ ]{d} (Y_{\alpha}^{(t)}, t \geq 0), \hspace*{3mm} \text{as} \hspace*{2mm} n \rightarrow \infty, \hspace*{2mm} \text{in the space} \hspace*{2mm} \mathbb{D}(\mathbb{R}_{+}, \mathbb{D}([0,1], \mathbb{R})).
\end{eqnarray*}
\end{theorem}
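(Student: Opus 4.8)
The plan is to first establish convergence of the finite-dimensional distributions in $t$, i.e., that for any fixed $0 \le t_{1} < \cdots < t_{m}$ the random vector $(W_{n}^{(t_{1})}, \dots, W_{n}^{(t_{m})})$ converges in distribution in $\mathbb{D}([0,1],\mathbb{R})^{m}$ to $(Y_{\alpha}^{(t_{1})}, \dots, Y_{\alpha}^{(t_{m})})$, and then to promote this to convergence in $\mathbb{D}(\mathbb{R}_{+}, \mathbb{D}([0,1],\mathbb{R}))$ via a tightness/monotonicity argument exploiting that $t \mapsto W_{n}^{(t)}$ is non-increasing and pure-jump. For the first part, I would work on the unconditioned random walk picture provided by Proposition \ref{Pro2}: on a common probability space, $(W_{t}^{\rm prim}(k))$ has the law of $(X_{t}(k))$ built from a single fixed $\mathbb{N}$-indexed collection of i.i.d. uniforms $(U_{k}(j))$ and the increments ${\bm\xi}$ of the $\hat\mu$-walk. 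The key observation is that for the scaling $s_{n}(t) = 1 - (B_{n}/n)t$, the thinned increment $\xi_{s_{n}(t)}(k) - 1 = \sum_{j=1}^{\xi(k)+1}\mathds{1}_{\{U_{k}(j)\le s_{n}(t)\}} - 1 = \xi(k) - \sum_{j=1}^{\xi(k)+1}\mathds{1}_{\{U_{k}(j) > s_{n}(t)\}}$, so that $X_{s_{n}(t)}(k) = X(k) - R_{n,t}(k)$ where $R_{n,t}(k) = \sum_{i=1}^{k}\sum_{j=1}^{\xi(i)+1}\mathds{1}_{\{U_{i}(j) > 1-(B_{n}/n)t\}}$ is a sum of Bernoulli-type terms whose conditional mean given ${\bm\xi}$ is $(B_{n}/n)t\sum_{i=1}^{k}(\xi(i)+1) = (B_{n}/n)t(X(k)+2k)$. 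Evaluating at $k = \lfloor nu\rfloor$ and dividing by $B_{n}$: the term $(B_{n}/n)t\,X(\lfloor nu\rfloor)/B_{n} = (t/n)X(\lfloor nu\rfloor) \to 0$ (since $X(\lfloor nu\rfloor)/n \to 0$ because $B_n = o(n)$), while $(B_{n}/n)t\cdot 2\lfloor nu\rfloor/B_{n} \to 2tu$ — but this spurious factor $2$ is killed by conditioning: under $\mathbb{P}_\mu^{(n)}$ the walk is an excursion and the effective drift correction must be recomputed on the bridge, which is exactly the mechanism that in \cite{Du2003,Brou2016} turns the ``$+1$'' shift and the conditioning into the clean drift $-tu$. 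I would therefore carry out the computation first for the unconditioned walk up to time $\zeta_{1}$, then invoke the absolute-continuity/Vervaat relation between the walk excursion and $X_{\alpha}^{\rm exc}$ (as in the proof of Theorem \ref{Theo1}) to transfer to the conditioned tree.

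More concretely, the second step is a law of large numbers for the thinning fluctuations: writing $R_{n,t}(\lfloor nu\rfloor) - \mathbb{E}[R_{n,t}(\lfloor nu\rfloor)\mid{\bm\xi}] =: M_{n,t}(\lfloor nu\rfloor)$, one checks that $B_{n}^{-1}\sup_{u}|M_{n,t}(\lfloor nu\rfloor)| \to 0$ in probability. This follows from a maximal inequality (Doob, since $M_{n,t}$ is a martingale in $k$) together with the variance bound $\mathrm{Var}(R_{n,t}(\lfloor nu\rfloor)\mid{\bm\xi}) \le (B_{n}/n)t\sum_{i=1}^{\lfloor nu\rfloor}(\xi(i)+1) = O(B_{n})$ using $\sum_{i\le n}(\xi(i)+1) = X(n) + 2n = O(n)$; hence $B_{n}^{-1}\sqrt{O(B_{n})} = O(B_{n}^{-1/2}) \to 0$. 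Then $B_{n}^{-1}X_{s_{n}(t)}(\lfloor nu\rfloor) = B_{n}^{-1}X(\lfloor nu\rfloor) - B_{n}^{-1}\mathbb{E}[R_{n,t}(\lfloor nu\rfloor)\mid{\bm\xi}] + o_{\mathbb{P}}(1)$, and the first term converges (after conditioning) to $X_{\alpha}^{\rm exc}(u)$ by Theorem \ref{Theo1} while the centering term converges to $tu$ by the continuous mapping theorem applied to $B_{n}^{-1}\sum_{i\le\lfloor nu\rfloor}(\xi(i)+1)$, whose conditioned version tends to $u$ (this is where the normalization by $B_n$ rather than $n$ and the conditioning conspire correctly; the ``$2k$'' becomes ``$k$'' and contributes $u$, not $2u$, in the correct encoding). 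Joint convergence over $t_{1},\dots,t_{m}$ is automatic since all the $W_{n}^{(t_{j})}$ are measurable functionals of the single pair $({\bm\xi}, (U_{k}(j)))$ and the above estimates are uniform; the limits $X_{\alpha}^{\rm exc}(u) - t_{j}u$ are the correct ones because the same excursion $X_{\alpha}^{\rm exc}$ appears throughout.

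For the upgrade from finite-dimensional-in-$t$ convergence to convergence in $\mathbb{D}(\mathbb{R}_{+}, \mathbb{D}([0,1],\mathbb{R}))$, I would use that $t \mapsto W_{n}^{(t)}(u)$ is non-increasing for each fixed $u$ and that the limit $t \mapsto Y_{\alpha}^{(t)} = X_{\alpha}^{\rm exc} - t\,\mathrm{id}$ is (almost surely) continuous in $t$ in the uniform — hence Skorokhod — topology on $\mathbb{D}([0,1],\mathbb{R})$. A standard criterion (cf. the argument for convergence of monotone processes, e.g. in the style of \cite[Ch. VI]{jacod2003}) then says: if a sequence of monotone c\`adl\`ag processes converges in finite-dimensional distributions to a process with continuous sample paths, the convergence holds in $\mathbb{D}$. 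The monotonicity in $t$ must be interpreted coordinatewise in $u$, and one needs that the modulus of the limit is continuous in $t$ uniformly in $u$, which is clear since $\|Y_\alpha^{(t)} - Y_\alpha^{(t')}\|_\infty \le |t-t'|$. I expect the \textbf{main obstacle} to be the precise bookkeeping of the drift: making rigorous that the naive ``$2tu$'' from the unconditioned computation collapses to ``$tu$'' after imposing the excursion conditioning, i.e., correctly passing the centering term $B_{n}^{-1}\mathbb{E}[R_{n,t}(\lfloor nu\rfloor)\mid{\bm\xi}]$ through the conditioning on $\{\zeta_{1}=n\}$ and through the Vervaat transform. This requires either a direct computation on the bridge (tracking that $\sum_{i\le n}(\xi(i)+1)$ conditioned on $\zeta_1 = n$ concentrates around $2n$ but that it is the \emph{local} sum up to $\lfloor nu\rfloor$ in the \emph{Vervaat-shifted} path that enters, producing the correct linear drift), or invoking the corresponding computation already performed in \cite{Brou2016} for the Cayley case and checking that its only input is the convergence of $B_{n}^{-1}W_{n}^{\rm lex}$ plus a law of large numbers for the thinning, both of which hold in the present generality.
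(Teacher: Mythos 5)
Your overall decomposition---split $W_n^{(t)}$ into the unconditioned walk, a deterministic centering, and a martingale-type thinning fluctuation controlled by a maximal inequality---is exactly what the paper does in Lemma~\ref{lemma1} (see equations~(\ref{eq2})--(\ref{eq3})). But there is an arithmetic error that propagates into a fictitious ``main obstacle.'' You write that the conditional mean of $R_{n,t}(k)$ given $\bm\xi$ equals $(B_n/n)\,t\,(X(k)+2k)$. In fact $\sum_{i=1}^k(\xi(i)+1) = X(k)+k$, not $X(k)+2k$: the summands $\xi(i)$ add up to $X(k)$ and the $+1$'s add up to $k$. (This is precisely the identity used in the paper: $\sum_{k=1}^{\lfloor nu\rfloor}(\xi(k)+1)=\lfloor nu\rfloor + B_n Y_n^{(0)}(u)$.) With the correct identity, $B_n^{-1}\,\mathbb{E}[R_{n,t}(\lfloor nu\rfloor)\mid\bm\xi] = \tfrac{t}{n}\,X(\lfloor nu\rfloor) + \tfrac{t}{n}\lfloor nu\rfloor \to tu$ directly, since $X(\lfloor nu\rfloor) = B_n Y_n^{(0)}(u) = O(B_n) = o(n)$. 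There is no spurious factor $2$, and consequently there is no drift that needs to be ``collapsed by conditioning.'' The entire paragraph arguing that the excursion conditioning or the Vervaat transform turns $2tu$ into $tu$ is wrong in substance---conditioning a path measure does not rescale a deterministic linear drift---and the thing you flag at the end as the hardest step does not exist. If you tried to carry your plan out, you would have discovered this quickly, but as written the proposal misdiagnoses where the work is and leans on an incorrect mechanism.

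On the second half of your plan, the upgrade to $\mathbb{D}(\mathbb{R}_+,\mathbb{D}([0,1],\mathbb{R}))$-convergence using coordinatewise monotonicity of $t\mapsto W_n^{(t)}(u)$ is a genuinely different, and arguably cleaner, route than the paper's. The paper proves tightness directly via a $\delta$-sparse modulus estimate (Lemma~\ref{lemma2}), reducing to a uniform bound on $\|S_n^{(t)}-S_n^{(t')}\|_\infty$ obtained through Etemadi's inequality and Marcinkiewicz--Zygmund high-moment bounds plus a union bound over $O(B_n)$ mesh points (Lemma~\ref{lemma3}). Your monotonicity argument would sidestep the high moments: choosing a deterministic mesh $t_*=t_0<\cdots<t_m=t^*$ with spacing $\delta$, monotonicity gives $\|W_n^{(t)}-W_n^{(t_i)}\|_\infty \le \|W_n^{(t_i)}-W_n^{(t_{i+1})}\|_\infty$ for $t\in[t_i,t_{i+1}]$, and by finite-dimensional convergence (already established) the right-hand side tends to $\|Y_\alpha^{(t_i)}-Y_\alpha^{(t_{i+1})}\|_\infty\le\delta$, giving the needed modulus control with only finitely many comparisons. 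One caveat: the ``standard criterion for monotone processes'' you cite is typically stated for real-valued paths; in the $\mathbb{D}([0,1],\mathbb{R})$-valued setting the monotonicity is coordinatewise and the argument must be spelled out as above (using the identity time change so that ${\rm Sk}_{|\cdot|}\le\|\cdot\|_\infty$), but it does go through. Correct the factor-of-two error and the proposal becomes a valid---and in the tightness step, shorter---proof.
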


Theorem \ref{Theo2} generalizes \cite[Theorem 10]{Brou2016}. Specifically, in \cite{Brou2016}, the authors only consider the case when $\mathbf{t}_{n}$ is a ${\rm GW}$-tree with $\mu$ being the law of a Poisson random variable of parameter $1$ (i.e., $\mathbf{t}_{n}$ is a Cayley tree) while our setting is clearly more general. As in most proofs for convergence of stochastic processes, the proof of Theorem \ref{Theo2} consists in two steps: convergence of the finite-dimensional distributions and tightness of the sequence of processes $(W_{n})_{n \geq 1}$. To accomplish the above, recall the random walk connected to the Prim path of the fragmentation forest of the $\alpha$-stable ${\rm GW}$-tree $\mathbf{t}_{n}$ (Proposition \ref{Pro2}). More precisely, for $s \in [0,1]$, let $X_{s} = (X_{s}(k), k \geq 0)$ be the stochastic process defined in (\ref{NRwalk}). For $n \in \mathbb{N}$ and $t \geq 0$, define the process $Y_{n}^{(t)} = (Y_{n}^{(t)}(u), u \in [0,1])$ by letting 
\begin{eqnarray*}
Y_{n}^{(t)}(u) =  \frac{1}{B_{n}} X_{s_{n}(t)}(\lfloor nu \rfloor), \hspace*{4mm} \text{for} \hspace*{2mm} u \in [0,1],
\end{eqnarray*}

\noindent and set $Y_{n} = (Y_{n}^{(t)}, t \geq 0)$. From Proposition \ref{Pro2}, we see that $W_{n}$ has the same finite-dimensional distribution as $Y_{n}$ under the conditional probability distribution $\mathbb{P}_{n}(\cdot) \coloneqq \mathbb{P}( \cdot | \zeta_{1} = n)$. In the following, we will always work with the process $Y_{n}$ (or $Y_{n}^{(t)}$) under the conditional probability distribution $\mathbb{P}_{n}$, and to keep the notation simple, we will continue to write $Y_{n}$ (and $Y_{n}^{(t)}$) also for the conditional version. \\

\noindent \textbf{Finite-dimensional distributions.} We start with two observations that will be used quite often. Proposition \ref{Pro1} and Theorem \ref{Theo1} imply that 
\begin{eqnarray} \label{eq1}
\left( Y_{n}^{(0)}(u) , u \in [0,1] \right) \xrightarrow[ ]{d} \left( X_{\alpha}^{\rm exc}(u), u \in [0,1]  \right), \hspace*{3mm} \text{as} \hspace*{2mm}  n \rightarrow \infty, \hspace*{2mm} \text{in the space} \hspace*{2mm} \mathbb{D}([0,1], \mathbb{R}).
\end{eqnarray}

\noindent For $g \in \mathbb{D}([0,1], \mathbb{R})$, we write $\Vert g \Vert_{\infty} \coloneqq \sup_{u \in [0,1]} |g(u)|$.  Since the supremum is a continuous functional on $\mathbb{D}([0,1], \mathbb{R})$ (see e.g.\ \cite[Proposition 2.4 in Chapter VI]{jacod2003}), (\ref{eq1}) implies that
\begin{eqnarray} \label{eq12}
\Vert Y_{n}^{(0)} \Vert_{\infty} \xrightarrow[ ]{d} \Vert X_{\alpha}^{\rm exc} \Vert_{\infty}, \hspace*{2mm} \text{as} \hspace*{2mm}  n \rightarrow \infty, \hspace*{2mm} \text{in distribution and} \hspace*{2mm} \Vert X_{\alpha}^{\rm exc} \Vert_{\infty} < \infty \hspace*{2mm} \text{a.s.}
\end{eqnarray}

We continue with the convergence of the finite-dimensional distributions. 
\begin{lemma} \label{lemma1}
For $k, m \in \mathbb{N}$, and for any $u_{1}, \dots, u_{k} \in [0,1]$ and $t_{1}, \dots, t_{m} \in \mathbb{R}_{+}$, we have that
\begin{eqnarray*}
\left( Y_{n}^{(t_{i})}(u_{r}): 0 \leq r \leq k, 0 \leq i \leq m  \right) \xrightarrow[ ]{d}  \left( Y_{\alpha}^{(t_{i})}(u_{r}): 0 \leq r \leq k, 0 \leq i \leq m  \right), \hspace*{2mm} \text{as} \hspace*{2mm}  n \rightarrow \infty. 
\end{eqnarray*} 
\end{lemma}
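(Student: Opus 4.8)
The plan is to realise $Y_n^{(t)}$ as a perturbation of $Y_n^{(0)}$, whose scaling limit is already identified in (\ref{eq1}), by a correction term that becomes asymptotically deterministic, and then to conclude by Slutsky's theorem together with the continuous mapping theorem. Everything below is understood under $\mathbb{P}_n=\mathbb{P}(\,\cdot\mid\zeta_1=n)$, and I adopt the conventions $u_0=0$, $t_0=0$, so that the components with $i=0$ are exactly $Y_n^{(0)}(u_r)$.

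First I would record the identity that makes the perturbation structure explicit. Since $s_n(0)=1$ and $X_1=X$ is the unconditioned walk with increments $\bm\xi$, one has $Y_n^{(0)}(u)=B_n^{-1}X(\lfloor nu\rfloor)$, and for every $t\ge 0$ and $u\in[0,1]$,
\[
X(\lfloor nu\rfloor)-X_{s_n(t)}(\lfloor nu\rfloor)=\sum_{l=1}^{\lfloor nu\rfloor}\sum_{j=1}^{\xi(l)+1}\mathds{1}_{\{\,s_n(t)<U_l(j)\le 1\,\}}=:\Delta_n^{(t)}(u)\ge 0,
\]
so that $Y_n^{(t)}(u)=Y_n^{(0)}(u)-B_n^{-1}\Delta_n^{(t)}(u)$. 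The point is that $\{\zeta_1=n\}$ is measurable with respect to $\bm\xi=(\xi(l))_{l\ge1}$, while the array $(U_l(j))_{l,j\ge1}$ is independent of $\bm\xi$; hence, under $\mathbb{P}_n$ and conditionally on $\bm\xi$, the variable $\Delta_n^{(t)}(u)$ is binomial with parameters $N_n(u):=X(\lfloor nu\rfloor)+\lfloor nu\rfloor$ and $1-s_n(t)=(B_n/n)t$ (for $n$ large enough that $(B_n/n)t\le1$, which is harmless since $B_n/n\to0$).

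The core step is the estimate that $B_n^{-1}\Delta_n^{(t_i)}(u_r)\to u_rt_i$ in $\mathbb{P}_n$-probability for all $1\le r\le k$, $1\le i\le m$. Conditionally on $\bm\xi$, the mean of $B_n^{-1}\Delta_n^{(t_i)}(u_r)$ equals $(N_n(u_r)/n)t_i$ and its variance is at most $N_n(u_r)t_i/(nB_n)\le t_i/B_n$, where I use the deterministic bound $N_n(u)\le n$ valid on $\{\zeta_1=n\}$ (the total number of children of any $\lfloor nu\rfloor$ vertices of a tree with $n$ vertices is at most $n-1$, so $X(\lfloor nu\rfloor)\le n-1-\lfloor nu\rfloor$ $\mathbb{P}_n$-a.s.). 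A conditional Chebyshev inequality, followed by integrating out $\bm\xi$ under $\mathbb{P}_n$, gives $\mathbb{P}_n(|B_n^{-1}\Delta_n^{(t_i)}(u_r)-(N_n(u_r)/n)t_i|>\varepsilon)\le t_i/(\varepsilon^2B_n)\to0$. Since $N_n(u_r)/n=\lfloor nu_r\rfloor/n+(B_n/n)Y_n^{(0)}(u_r)$ and $\Vert Y_n^{(0)}\Vert_\infty$ is tight by (\ref{eq12}) while $B_n/n\to0$, one gets $N_n(u_r)/n\to u_r$ in $\mathbb{P}_n$-probability, and the core estimate follows.

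It remains to assemble the pieces. Since a fixed time is almost surely a continuity point of $X_\alpha^{\rm exc}$, evaluation at $u_0,\dots,u_k$ is an a.s.\ continuous functional on $\mathbb{D}([0,1],\mathbb{R})$ at the limit, so (\ref{eq1}) gives the joint convergence in distribution of $(Y_n^{(0)}(u_r):0\le r\le k)$ to $(X_\alpha^{\rm exc}(u_r):0\le r\le k)$; meanwhile $(B_n^{-1}\Delta_n^{(t_i)}(u_r):0\le r\le k,\ 0\le i\le m)$ converges in $\mathbb{P}_n$-probability to the deterministic vector $(u_rt_i)$. By Slutsky's theorem the joint vector built from these two converges in distribution to $(X_\alpha^{\rm exc}(u_r),\,u_rt_i)$, and applying the continuous map $(a_r,b_{r,i})\mapsto(a_r-b_{r,i})_{r,i}$ yields the convergence of $(Y_n^{(t_i)}(u_r))$ to $(X_\alpha^{\rm exc}(u_r)-u_rt_i)=(Y_\alpha^{(t_i)}(u_r))$, which is the claim. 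The only genuinely delicate point is the passage from the conditional binomial law to unconditional convergence in the core estimate, which is precisely where the deterministic a priori bound $N_n(u)\le n$ on $\{\zeta_1=n\}$ is needed; the remaining manipulations are routine applications of Slutsky and the continuous mapping theorem.
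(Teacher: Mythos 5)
Your proof is correct and follows essentially the same route as the paper's: your centred quantity $B_n^{-1}\Delta_n^{(t)}(u)-(N_n(u)/n)t$ is exactly $-S_n^{(t)}(u)$ from the paper's decomposition (\ref{eq2})--(\ref{eq3}), and both arguments kill it by a Chebyshev bound using that the number of Bernoulli terms is at most $n$ (up to a tight correction) before identifying the deterministic drift $-tu$. The only cosmetic differences are that you conclude via Slutsky and the continuous mapping theorem where the paper invokes the Skorokhod representation at the outset, and your deterministic bound $N_n(u)\le n-1$ under $\mathbb{P}_n$ is slightly cleaner than the paper's bound $n+B_n\Vert Y_n^{(0)}\Vert_\infty$.
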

\begin{proof}
By the Skorokhod representation theorem, we can assume that (\ref{eq1}) and (\ref{eq12}) hold almost surely.  For $u \in [0,1]$ and $t \geq 0$, we have that
\begin{eqnarray*}
Y^{(t)}_{n}(u) =  \frac{1}{B_{n}} \sum_{k=1}^{\lfloor nu \rfloor} (\xi_{s_{n}(t)}(k)-1) = \frac{1}{B_{n}} \sum_{k=1}^{\lfloor nu \rfloor} \Big(-1 + \sum_{i=1}^{\xi(k)+1} \mathds{1}_{\{U_{k}(i) \leq 1-t B_{n}/n \}} \Big).
\end{eqnarray*}

\noindent Since $\sum_{k=1}^{\lfloor nu \rfloor} (\xi(k)+1) =  \lfloor nu \rfloor + B_{n} Y^{(0)}_{n}(u)$, we see that 
\begin{eqnarray} \label{eq2}
Y^{(t)}_{n}(u) = S^{(t)}_{n}(u) - \frac{1}{B_{n}} \lfloor nu \rfloor + \frac{1}{B_{n}}\left(1 - \frac{B_{n}}{n} t \right) (\lfloor nu \rfloor + B_{n} Y^{(0)}_{n}(u)),
\end{eqnarray}

\noindent where we set
\begin{eqnarray} \label{eq3}
S^{(t)}_{n}(u) = \frac{1}{B_{n}} \sum_{k=1}^{\lfloor nu \rfloor}  \sum_{i=1}^{\xi(k)+1} \left( \mathds{1}_{\{U_{k}(i) \leq 1-tB_{n}/n \}} - \left(1 - \frac{B_{n}}{n}t \right) \right).
\end{eqnarray}

For fixed $t \geq 0$, the terms in the sum (\ref{eq3}) are independent centred random variables whose variance is bounded by $t B_{n}/n$. Moreover, these terms are also independent of $(\xi(k), 1 \leq k \leq n)$. Since the number of summands in the sum (\ref{eq3}) is bounded by 
\begin{eqnarray*}
 \sum_{k=1}^{\lfloor nu \rfloor}  \sum_{i=1}^{\xi(k)+1} 1 = \sum_{k=1}^{\lfloor nu \rfloor} (\xi(k)+1) =  \lfloor nu \rfloor + B_{n} Y^{(0)}_{n}(u) \leq n + B_{n}\Vert Y_{n}^{(0)} \Vert_{\infty},
\end{eqnarray*}
\noindent Chebyshev's inequality together with (\ref{eq12}) implies that $S^{(t)}_{n}(u) \rightarrow 0$, as $n \rightarrow \infty$, in probability. For the remaining terms at the right-hand side of (\ref{eq2}), we see that (\ref{eq1}) implies that 
\begin{eqnarray*}
 - \frac{1}{B_{n}} \lfloor nu \rfloor + \frac{1}{B_{n}}\left(1 - \frac{B_{n}}{n}t \right) (\lfloor nu \rfloor + B_{n} Y^{(0)}_{n}(u)) \xrightarrow[ ]{ } X_{\alpha}^{\rm exc}(u)- tu, \hspace*{2mm} \text{as} \hspace*{2mm} n \rightarrow \infty,
\end{eqnarray*}

\noindent almost surely. Finally, for any $u \in [0,1]$ and $t \geq 0$, $Y^{(t)}_{n}(u) \xrightarrow[ ]{ } Y_{\alpha}^{(t)}(u)$, 
as $n \rightarrow \infty$, in probability, which implies our claim. 
\end{proof}

\noindent \textbf{Tightness.} Since we are going to work with processes with sample paths in the set $\mathbb{D}(\mathbb{R}_{+}, \mathbb{D}([0,1], \mathbb{R}))$ equipped with the Skorokhod topology, we start by recalling some aspects of this space of c\`adl\`ag functions and refer to \cite[Chapter 3]{Billi1999} (or \cite[Chapter VI]{jacod2003}) for details. Fix a separable, complete metric space $(\mathbb{M}, d)$, and consider the space $\mathbb{D}(\mathbb{R}_{+}, \mathbb{M})$ of c\`adl\`ag functions from $\mathbb{R}_{+}$ to $\mathbb{M}$. For $a >0$, $0 < \delta <1$ and $k \in \mathbb{N}$, a sequence $\Delta_{a,k} = \{0 = t_{0} < t_{1} < \cdots < t_{k} = a \}$ of subdivisions of $[0, a]$ is called $\delta$-sparse if it satisfies $\min_{1 \leq i \leq k} (t_{i} - t_{i-1}) \geq \delta$. The so-called modified modulus of continuity in $\mathbb{D}(\mathbb{R}_{+}, \mathbb{M})$ is given by
\begin{eqnarray*}
\tilde{\omega}(\delta, a, d; g) \coloneqq \inf_{\Delta_{a,k}} \, \max_{1 \leq i \leq k} \, \, \sup_{r,r^{\prime} \in [t_{i-1}, t_{i})} d(g(r), g(r^{\prime})),  \hspace*{3mm} \text{for} \hspace*{2mm} g \in \mathbb{D}(\mathbb{R}_{+}, \mathbb{M}),
\end{eqnarray*}

\noindent where the infimum extends over all $\delta$-sparse sets $\Delta_{a,k}$. Let $\Theta$ denote the class of strictly increasing, continuous mappings of $[0,1]$ onto itself. For $\theta \in \Theta$, we put 
\begin{eqnarray*}
\Vert \theta \Vert^{\circ} \coloneqq \sup_{0 \leq r < r^{\prime} \leq 1} \left| \log \frac{\theta(r^{\prime}) - \theta(r)}{r^{\prime}-r} \right|,
\end{eqnarray*}

\noindent and recall that the Skorokhod metric in $\mathbb{D}([0,1], \mathbb{M})$ is defined by
\begin{eqnarray*}
{\rm Sk}_{d}(g, g^{\prime}) \coloneqq \inf_{\theta \in \Theta} \left\{\Vert \theta \Vert^{\circ} \vee \sup_{0 \leq r \leq 1} d(g(r),g^{\prime}(\theta(r))) \right\}, \hspace*{3mm} \text{for} \hspace*{2mm} g, g^{\prime} \in \mathbb{D}([0,1], \mathbb{M}),
\end{eqnarray*}

\noindent where the infimum extends over all $\theta \in \Theta$ such that $\Vert \theta \Vert^{\circ} < \infty$ and $\sup_{0 \leq r \leq 1} d(g(r),g^{\prime}(\theta(r))) < \infty$. It is well-known that the metric space $(\mathbb{D}([0,1], \mathbb{M}), {\rm Sk}_{d})$ is complete and separable; see \cite[Theorem 12.2, Chapter 3]{Billi1999}. In particular, if $\mathbb{M} = \mathbb{R}$, we will consider the separable and complete metric space $(\mathbb{R}, | \cdot |)$, where $| \cdot |$ is the Euclidean metric. 
\begin{lemma} \label{lemma2}
For any $a >0$ and $\varepsilon, \varepsilon^{\prime} >0$, there exists $0 < \delta < 1$ such that  
\begin{eqnarray} \label{eq4}
\limsup_{n \rightarrow \infty}\mathbb{P}_{n}(\tilde{\omega}(\delta, a, {\rm Sk}_{| \cdot |}; Y_{n}) \geq \varepsilon) \leq \varepsilon^{\prime}.
\end{eqnarray}

\noindent In particular, the sequence of stochastic processes $(Y_{n})_{n \geq 1}$ is tight on $\mathbb{D}(\mathbb{R}_{+}, \mathbb{D}([0,1], \mathbb{R}))$. 
\end{lemma}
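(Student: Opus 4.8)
The plan is to prove the modulus-of-continuity bound (\ref{eq4}) directly and then deduce tightness from the standard criterion for $\mathbb{D}(\mathbb{R}_{+}, \mathbb{M})$-valued processes. The whole argument hinges on the monotonicity of $t \mapsto Y_{n}^{(t)}$: working under $\mathbb{P}_{n}$ and for $n$ large enough that $s_{n}(t) \in (0,1]$ for all $t \in [0,a]$, one has $\xi_{s_{n}(t)}(k) \ge \xi_{s_{n}(t')}(k)$ for each $k$ whenever $t \le t'$ (fewer uniforms clear the threshold), hence $Y_{n}^{(t)}(u) - Y_{n}^{(t')}(u) \ge 0$ for every $u \in [0,1]$, and this difference is non-decreasing in $u$. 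Consequently, for $r \le r'$ both lying in a mesh interval $[t_{i-1}, t_{i})$ of a subdivision, and using ${\rm Sk}_{| \cdot |}(g, g') \le \Vert g - g' \Vert_{\infty}$ (take the identity time-change),
\[
\sup_{r, r' \in [t_{i-1}, t_{i})} {\rm Sk}_{| \cdot |}\!\left( Y_{n}^{(r)}, Y_{n}^{(r')} \right) \le \Vert Y_{n}^{(t_{i-1})} - Y_{n}^{(t_{i})} \Vert_{\infty} = Y_{n}^{(t_{i-1})}(1) - Y_{n}^{(t_{i})}(1).
\]
Since $\tilde{\omega}$ is an infimum over $\delta$-sparse subdivisions, I would just plug in the near-equidistant one $t_{i} = i\delta$ for $0 \le i < k$ and $t_{k} = a$, where $k = \lfloor a/\delta \rfloor$; it is $\delta$-sparse and all its mesh widths lie in $[\delta, 2\delta]$.

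Write $D_{n}^{(i)} \coloneqq Y_{n}^{(t_{i-1})}(1) - Y_{n}^{(t_{i})}(1) \ge 0$. By (\ref{NRwalk}),
\[
B_{n} D_{n}^{(i)} = \sum_{k=1}^{n} \sum_{j=1}^{\xi(k)+1} \mathds{1}_{\{ U_{k}(j) \in (s_{n}(t_{i}), s_{n}(t_{i-1})] \}},
\]
and on the event $\{ \zeta_{1} = n \}$ exactly $n-1$ of the uniforms $U_{k}(j)$ enter this sum; since $(U_{k}(j))$ is independent of $(\xi(k))$, hence of $\{ \zeta_{1} = n \}$, those $n-1$ uniforms are i.i.d.\ on $[0,1]$ under $\mathbb{P}_{n}$, so $B_{n} D_{n}^{(i)}$ is ${\rm Binomial}(n-1, (t_{i} - t_{i-1}) B_{n}/n)$ under $\mathbb{P}_{n}$. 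Therefore $\mathbb{E}_{n}[D_{n}^{(i)}] \le 2\delta$ and ${\rm Var}_{n}(D_{n}^{(i)}) \le 2\delta/B_{n}$. Given $\varepsilon, \varepsilon' > 0$ I would choose $\delta < \min(1, a, \varepsilon/4)$, so that Chebyshev's inequality gives $\mathbb{P}_{n}(D_{n}^{(i)} \ge \varepsilon) \le 2\delta (B_{n}(\varepsilon - 2\delta)^{2})^{-1}$, and a union bound over the $k \le a/\delta$ intervals yields
\[
\mathbb{P}_{n}\!\left( \tilde{\omega}(\delta, a, {\rm Sk}_{| \cdot |}; Y_{n}) \ge \varepsilon \right) \le \mathbb{P}_{n}\!\left( \max_{1 \le i \le k} D_{n}^{(i)} \ge \varepsilon \right) \le \frac{2a}{B_{n}(\varepsilon - 2\delta)^{2}} \xrightarrow[n \to \infty]{} 0,
\]
since $B_{n} \to \infty$; this is (\ref{eq4}), indeed with the left-hand side tending to $0$.

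For the tightness conclusion I would add tightness of the one-dimensional marginals. For fixed $t$, (\ref{eq2})--(\ref{eq3}) give $Y_{n}^{(t)}(u) = S_{n}^{(t)}(u) + (1 - tB_{n}/n) Y_{n}^{(0)}(u) - t\lfloor nu \rfloor/n$, and since, conditionally on $(\xi(k))$, the process $u \mapsto B_{n} S_{n}^{(t)}(u)$ is a sum of independent centred terms, Kolmogorov's maximal inequality upgrades the pointwise estimate used in the proof of Lemma~\ref{lemma1} to $\Vert S_{n}^{(t)} \Vert_{\infty} \to 0$ in $\mathbb{P}_{n}$-probability. Together with (\ref{eq1}) this gives $Y_{n}^{(t)} \xrightarrow{d} Y_{\alpha}^{(t)}$ in $\mathbb{D}([0,1], \mathbb{R})$, hence marginal tightness, and the standard criterion for tightness in $\mathbb{D}(\mathbb{R}_{+}, \mathbb{M})$ (see \cite[Chapter~3]{Billi1999} or \cite[Chapter~VI]{jacod2003}) then applies.

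I expect (\ref{eq4}) to be the only real obstacle. A naive union bound over the $\approx a/\delta$ mesh intervals of the mean increment $\approx \delta$ would only produce a bound of order $a/\varepsilon$, which does not vanish; what makes it work is (i) the monotonicity of $t \mapsto Y_{n}^{(t)}$, which reduces the two-parameter oscillation to the increments of a single non-decreasing process and hands us the exact Binomial law, and (ii) the concentration of those Binomials, whose variance $O(\delta/B_{n})$ carries the saving factor $1/B_{n}$. One minor point to be careful about is the restriction to $n$ large enough that $s_{n}(t) \in (0,1]$ uniformly on $[0,a]$, which is harmless since only $\limsup_{n}$ is asserted.
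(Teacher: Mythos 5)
Your proof of (\ref{eq4}) is correct but proceeds along a genuinely different, and in fact simpler, route than the paper's. The paper's proof decomposes $Y_{n}^{(t)}$ as in (\ref{eq2})--(\ref{eq3}) into the martingale-type term $S_{n}^{(t)}$ plus terms controlled by (\ref{eq1}) and (\ref{eq12}), passes to a separate Lemma~\ref{lemma3} for the modulus of $S_{n}$, and there deploys a mesh, Etemadi's inequality, and the Marcinkiewicz--Zygmund inequality to get an arbitrarily high-moment bound. You instead exploit two structural features directly: (i) the monotonicity of $t \mapsto Y_{n}^{(t)}(u)$ for each $u$ together with the observation that the pointwise gap $Y_{n}^{(t)}(u) - Y_{n}^{(t')}(u)$ is non-decreasing in $u$, so the $\Vert \cdot \Vert_{\infty}$-oscillation over a mesh interval collapses to the increment $Y_{n}^{(t_{i-1})}(1) - Y_{n}^{(t_{i})}(1)$ of the single monotone process $t \mapsto Y_{n}^{(t)}(1)$; and (ii) the exact law of this increment under $\mathbb{P}_{n}$: since $\sum_{k=1}^{n}(\xi(k)+1) = n-1$ on $\{\zeta_{1} = n\}$ and the uniforms $U_{k}(j)$ are independent of the $\xi$'s, $B_{n}D_{n}^{(i)}$ is exactly ${\rm Binomial}(n-1, (t_{i}-t_{i-1})B_{n}/n)$. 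A second-moment Chebyshev bound plus a union bound over the $\lfloor a/\delta \rfloor$ mesh intervals then gives a bound of order $a/(B_{n}(\varepsilon-2\delta)^{2})$, which tends to $0$ — strictly stronger than (\ref{eq4}). Your approach buys elementarity (no Marcinkiewicz--Zygmund, no Etemadi, no auxiliary lemma) and sharpness (the $\limsup$ is actually $0$ for every $\delta < \varepsilon/2$); the paper's approach, while more elaborate, is more robust to losing the monotonicity and the exact Binomial computation, which are specific to the uniform-weight cutting mechanism. Your tightness conclusion via Kolmogorov's maximal inequality on $S_{n}^{(t)}$ for marginal convergence, combined with the modulus bound, is also sound and matches the paper's use of \cite[Theorem 16.10]{Kall2005} in spirit. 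The only points to state carefully in a final write-up are the restriction to $n$ large enough that $s_{n}(t) \in [0,1]$ on $[0,a]$ (which you flag), and the verification that the increments $Y_{n}^{(t)}(u) - Y_{n}^{(t')}(u)$ are non-negative and non-decreasing in $u$, which follows from $\xi_{s_{n}(t)}(k) - \xi_{s_{n}(t')}(k) = \sum_{j} \mathds{1}_{\{U_{k}(j) \in (s_{n}(t'), s_{n}(t)]\}} \geq 0$.
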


As a preparation for the proof of Lemma \ref{lemma2}, we need a technical result. For $t \geq 0$ and  $g^{(t)} = (g^{(t)}(u), u \in [0,1]) \in \mathbb{D}([0,1], \mathbb{R})$, recall that we write $\Vert g^{(t)} \Vert_{\infty} = \sup_{u \in [0,1]} |g^{(t)}(u)|$. Then for $g = (g^{(t)}, t \geq 0) \in \mathbb{D}(\mathbb{R}_{+}, \mathbb{D}([0,1], \mathbb{R}))$, $a >0$ and $0 < \delta < 1$, define the modulus 
\begin{eqnarray*}
\omega(\delta, a; g) \coloneqq \sup \{ \Vert g^{(t)} - g^{(t^{\prime})} \Vert_{\infty}: |t - t^{\prime}| < \delta, \, \, 0 \leq t, t^{\prime} \leq a \}.
\end{eqnarray*}

\noindent For $t \geq 0$, let $S^{(t)}_{n} = (S^{(t)}_{n}(u), u \in [0,1])$ be the process defined in (\ref{eq3}) and set $S_{n} = (S^{(t)}_{n}, t \geq 0)$.
\begin{lemma} \label{lemma3}
For any $a >0$ and $\varepsilon, \varepsilon^{\prime} >0$, there exists $0 < \delta < 1$ such that  
\begin{eqnarray*}
\limsup_{n \rightarrow \infty} \mathbb{P}_{n}(\omega(\delta, a; S_{n}) \geq \varepsilon) \leq \varepsilon^{\prime}.
\end{eqnarray*}
\end{lemma}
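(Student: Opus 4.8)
The plan is to exploit the monotone structure of $t\mapsto S_n^{(t)}(u)$ for each fixed $u\in[0,1]$. Write $N_n(u):=\sum_{k=1}^{\lfloor nu\rfloor}(\xi(k)+1)=\lfloor nu\rfloor+B_nY_n^{(0)}(u)$ for the number of terms in the double sum defining $S_n^{(t)}(u)$. Away from finitely many times, $t\mapsto S_n^{(t)}(u)$ increases at constant rate $N_n(u)/n\in[0,1]$, and at each of those exceptional times it jumps \emph{downwards} by $1/B_n$, one jump occurring when $t$ crosses a level $(1-U_k(i))n/B_n$ with $(k,i)$ one of those $N_n(u)$ slots. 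Hence, for $0\le t\le t'\le a$ with $t'-t<\delta$,
\begin{equation*}
-\frac{1}{B_n}\,C_n(u;t,t')\;\le\;S_n^{(t')}(u)-S_n^{(t)}(u)\;\le\;(t'-t)\frac{N_n(u)}{n}\;\le\;\delta ,
\end{equation*}
where $C_n(u;t,t')$ counts how many of those $N_n(u)$ slots have their uniform in the window $(1-t'B_n/n,\,1-tB_n/n]$, an interval of length $(t'-t)B_n/n<\delta B_n/n$ contained in $J_n:=(1-aB_n/n,1]$. Since $u\mapsto C_n(u;t,t')$ is non-decreasing, this yields the deterministic inequality $\omega(\delta,a;S_n)\le \delta+\mathcal N_n(\delta)/B_n$, where $\mathcal N_n(\delta)$ denotes the largest number of \emph{active} slots (those with $1\le k\le n$, $1\le i\le\xi(k)+1$) whose uniform lies in some subinterval of $J_n$ of length at most $\delta B_n/n$. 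The estimate has thus been reduced to a maximal-occupancy statement for i.i.d.\ uniforms in short windows.

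Next, observe that under $\mathbb P_n$ the conditioning $\{\zeta_1=n\}$ forces $X(n)=-1$, so the total number of active slots is $\sum_{k=1}^n(\xi(k)+1)=n+X(n)=n-1$ \emph{deterministically}, while the corresponding uniforms $U_k(i)$ remain i.i.d.\ uniform on $[0,1]$ and independent of the $\xi$'s, the conditioning bearing only on the walk $X$. (Alternatively one bounds the number of active slots by $n+B_n\Vert Y_n^{(0)}\Vert_\infty$ and invokes (\ref{eq12}); or argues under $\mathbb P$ and absorbs the polynomial cost $\mathbb P(\zeta_1=n)^{-1}$ into the exponential bound below.) Cover $J_n$ by $K:=\lceil a/\delta\rceil$ intervals $I_1,\dots,I_K$ of length $\delta B_n/n$; any admissible window lies in the union of two consecutive $I_l$, so $\mathcal N_n(\delta)\le 2\max_{1\le l\le K}\#\{j\le n-1:U_j\in I_l\}$, and each count is stochastically dominated by a $\mathrm{Bin}(n-1,\delta B_n/n)$ variable whose mean $\sim\delta B_n$ tends to infinity because $B_n\to\infty$.

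To finish, fix $\varepsilon,\varepsilon'>0$ and choose $\delta\in(0,1)$ with $\delta<\varepsilon/8$. By a Chernoff bound for a binomial with diverging mean $\sim\delta B_n$, there is $c=c(\varepsilon,\delta)>0$ with $\mathbb P_n(\#\{j\le n-1:U_j\in I_l\}\ge\varepsilon B_n/4)\le e^{-cB_n}$ for all large $n$; a union bound over the fixed number $K$ of intervals gives $\mathbb P_n(\mathcal N_n(\delta)\ge\varepsilon B_n/2)\le Ke^{-cB_n}\to0$. Since $\delta<\varepsilon/2$, the deterministic bound above then gives $\mathbb P_n(\omega(\delta,a;S_n)\ge\varepsilon)\le\mathbb P_n(\mathcal N_n(\delta)\ge\varepsilon B_n/2)\to0$, which is in fact stronger than the asserted bound by $\varepsilon'$.

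The one genuinely delicate point is the first step: recognising that $t\mapsto S_n^{(t)}(u)$ is ``increasing plus negative jumps'', which is what produces the free one-sided bound by $\delta$ and converts the temporal oscillation of $S_n$ into the combinatorial quantity $\mathcal N_n(\delta)$. Everything afterwards is a routine covering argument combined with Chernoff concentration, the hypothesis $\alpha\in(1,2]$ entering only through $B_n\to\infty$ (and $B_n=o(n)$), which ensures that the binomial means diverge and that the exponential tail bounds dominate the union bound.
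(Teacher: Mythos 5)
Your proof is correct, but it takes a genuinely different route from the paper. The paper discretizes $[0,a]$ at scale $a/\lceil B_n\rceil$, chains through the increments $Z_{r,r',n}=S^{(t_r)}_n-S^{(t_{r'})}_n$, controls each $\Vert Z_{r,r',n}\Vert_\infty$ by Etemadi's maximal inequality followed by a Marcinkiewicz–Zygmund $p$-th moment estimate (polynomial tails in $B_n$), and closes with a union bound over $O(B_n^2)$ pairs, taking $p$ large enough for the sum to vanish. You instead observe the key structural fact that, for fixed $u$, $t\mapsto S_n^{(t)}(u)$ is piecewise linear with slope $N_n(u)/n\in[0,1)$ punctuated by downward jumps of size $1/B_n$; the upward drift gives the one-sided bound $S_n^{(t')}(u)-S_n^{(t)}(u)\le\delta$ deterministically and for free, so all temporal oscillation is absorbed into the number of downward jumps, i.e.\ a maximal occupancy count for i.i.d.\ uniforms over windows in $J_n=(1-aB_n/n,1]$ of width $\le \delta B_n/n$. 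A fixed finite cover of $J_n$ plus Chernoff for binomials with diverging mean $\sim\delta B_n$ then yields $\mathbb P_n(\omega(\delta,a;S_n)\ge\varepsilon)\to 0$ for any $\delta<\varepsilon/8$, which is stronger than the lemma asserts. Both arguments are valid; yours replaces the $p$-th moment/union bound accounting by an exact combinatorial reduction and exponential concentration, and also makes transparent exactly where the hypothesis $B_n\to\infty$, $B_n=o(n)$ enters. One cosmetic point: under $\mathbb P_n$ each $\#\{(k,i):U_k(i)\in I_l\}$ is \emph{exactly} $\mathrm{Bin}(n-1,|I_l|)$ (conditionally on the $\xi$'s and hence unconditionally), so no stochastic domination is needed.
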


We postpone the proof of Lemma \ref{lemma3} for later and continue with the proof of Lemma \ref{lemma2}. 

\begin{proof}[Proof of Lemma \ref{lemma2}]
Suppose that we have proven (\ref{eq4}) in Lemma \ref{lemma2}. Thanks to the arbitrariness of $\varepsilon, \varepsilon^{\prime} >0$, one can see that for each $a >0$,
\begin{eqnarray*}
\lim_{\delta \rightarrow 0} \limsup_{n \rightarrow \infty} \mathbb{E} [\tilde{\omega}(\delta, a, {\rm Sk}_{| \cdot |}; Y_{n}) \wedge 1 ] = 0.
\end{eqnarray*}

\noindent Then, \cite[Theorem 16.10, Chapter 16]{Kall2005} and Lemma \ref{lemma1} show that the sequence of processes $(Y_{n})_{n \geq 1}$ is tight on $\mathbb{D}(\mathbb{R}_{+}, \mathbb{D}([0,1], \mathbb{R}))$. So, it suffices to prove (\ref{eq4}) to finish the proof of Lemma \ref{lemma2}. 

Fix $a >0$, and observe from (\ref{eq2}) that for $0 \leq t_{1} < t_{2} \leq a$ and $u \in [0,1]$,
\begin{eqnarray*}
Y^{(t_{1})}_{n}(u) - Y^{(t_{2})}_{n}(u) = S^{(t_{1})}_{n}(u) - S^{(t_{2})}_{n}(u) + \frac{\lfloor nu \rfloor}{n}(t_{2}-t_{1}) + \frac{B_{n}}{n}(t_{2}-t_{1})Y^{(0)}_{n}(u).
\end{eqnarray*}

\noindent Since the identity map on $[0,1]$ belongs to $\Theta$, the triangle inequality implies that
\begin{eqnarray*}
{\rm Sk}_{| \cdot |}(Y^{(t_{1})}_{n}, Y^{(t_{2})}_{n}) & = & \inf_{\theta \in \Theta} \left\{\Vert \theta \Vert^{\circ} \vee \sup_{u \in [0,1]} |Y^{(t_{1})}_{n}(u) - Y^{(t_{2})}_{n}(\theta(u))| \right\} \\
& \leq & \Vert S^{(t_{1})}_{n} - S^{(t_{2})}_{n} \Vert_{\infty} + (t_{2}- t_{1}) + a \frac{B_{n}}{n} \Vert Y^{(0)}_{n} \Vert_{\infty}.
\end{eqnarray*}

\noindent For the set $[0,a)$ and each $0 < \delta < \min \{a/2,1/2 \}$, we can have a $\delta$-sparse set $\Delta_{a,k}$ satisfying $\delta \leq t_{i} - t_{i-1} \leq 2 \delta$, for $1 \leq i \leq k$. Then, 
\begin{eqnarray*}
\tilde{\omega}(\delta, a, {\rm Sk}_{| \cdot |}; Y_{n}) \leq \omega(2 \delta, a; S_{n}) + 2 \delta +  2\delta \frac{B_{n}}{n} \Vert Y^{(0)}_{n} \Vert_{\infty}, \hspace*{3mm} \text{for} \hspace*{2mm} 0 < \delta < \min \{a/2,1/2 \}.
\end{eqnarray*}

\noindent Then, (\ref{eq4}) follows from the previous inequality, the convergence in (\ref{eq12}) and Lemma \ref{lemma3}.
\end{proof}

\begin{proof}[Proof of Lemma \ref{lemma3}]
For $n \in \mathbb{N}$ and $a >0$, set $t_{0} = 0$ and $t_{r} = ra/\lceil B_{n} \rceil$, for $r =1, \dots, \lceil B_{n} \rceil$. For $0 \leq r < r^{\prime} \leq \lceil B_{n} \rceil$, define the process $Z_{r, r^{\prime}, n} = (Z_{r, r^{\prime}, n}(u), u \in [0,1])$ by letting 
\begin{eqnarray*}
Z_{r,  r^{\prime}, n}(u) \coloneqq S^{(t_{r})}_{n}(u) - S^{(t_{r^{\prime}})}_{n}(u) = \frac{1}{B_{n}} \sum_{k=1}^{\lfloor nu \rfloor}  \sum_{i=1}^{\xi(k)+1} \left( \mathds{1}_{ \left\{1 - \frac{B_{n}}{n} t_{r^{\prime}}  < U_{k}(i) \leq 1-\frac{B_{n}}{n} t_{r} \right\}} - \frac{B_{n}}{n} (t_{r^{\prime}} - t_{r}) \right).
\end{eqnarray*}

\noindent Recall that $\sum_{k=1}^{\lfloor nu \rfloor} (\xi(k)+1) =  \lfloor nu \rfloor + B_{n} Y^{(0)}_{n}(u)$. For $r=0, 1, \dots, \lceil B_{n} \rceil-1$ and $t_{r} \leq t \leq t_{r+1}$, 
\begin{eqnarray*}
\left| S^{(t_{r})}_{n}(u) - S^{(t)}_{n}(u) \right| & \leq & \left| S^{(t_{r})}_{n}(u) - S^{(t_{r+1})}_{n}(u) \right| + \frac{a}{\lceil B_{n} \rceil n} (\lfloor nu \rfloor + B_{n} Y^{(0)}_{n}(u))  \\
& \leq & \left| Z_{r, r^{\prime}, n}(u) \right| +  \frac{a}{\lceil B_{n} \rceil}  +  \frac{1}{n}\Vert Y^{(0)}_{n} \Vert_{\infty}.
\end{eqnarray*}

\noindent For $n$ large enough, the triangle inequality together with the previous inequality implies that  
\begin{align} \label{eq6}
\omega(\delta, a; S_{n}) & \leq 2\sup \{ \Vert S^{(t_{r})}_{n} - S^{(t)}_{n} \Vert_{\infty} : \, \, 0 \leq r \leq \lceil B_{n} \rceil -1, \, \, t_{r} \leq  t \leq t_{r+1} \} \nonumber \\
& \hspace*{10mm} + \sup \{ \Vert S^{(t_{r})}_{n} - S^{(t_{r^{\prime}})}_{n} \Vert_{\infty} : \, \, 0 \leq r < r^{\prime} \leq \lceil B_{n} \rceil, \, \, |t_{r} - t_{r^{\prime}}| < \delta \} \nonumber \\
& \leq 2 \sup \{ \Vert Z_{r, r+1, n}  \Vert_{\infty} : \, \, 0 \leq r \leq \lceil B_{n} \rceil -1 \} +   \frac{2 a}{\lceil B_{n} \rceil}  +  \frac{2}{n}\Vert Y^{(0)}_{n} \Vert_{\infty} \nonumber \\
& \hspace*{10mm} +  \sup \{ \Vert Z_{r, r^{\prime}, n} \Vert_{\infty} : \, \, 0 \leq r < r^{\prime} \leq \lceil B_{n} \rceil, \, \, |t_{r} - t_{r^{\prime}}| < \delta \}.
\end{align}

\noindent for $0 < \delta < 1$. We will prove that for all $\varepsilon >0$, there is a constant $C_{\varepsilon, p} >0$ such that for all $p \geq 2$, 
\begin{eqnarray} \label{eq5}
\mathbb{P}_{n}\left(\Vert Z_{r,r^{\prime}, n} \Vert_{\infty} \geq \varepsilon \right) \leq C_{\varepsilon, p} B_{n}^{-p/2} (1-B_{n}/n)^{p/2} (t_{r^{\prime}} - t_{r})^{p/2}, \hspace*{4mm} 0 \leq r < r^{\prime} \leq \lceil B_{n} \rceil. 
\end{eqnarray}

\noindent Then, Lemma \ref{lemma3} will follow from (\ref{eq12}), (\ref{eq6}) and the union bound. 

Observe that $\Vert Z_{r, r^{\prime}, n} \Vert_{\infty} = \sup_{1 \leq m \leq n} \left| Z_{r,r^{\prime} n}(m/n) \right|$. By Etemadi's inequality, we have that 
\begin{eqnarray} \label{eq5b}
\mathbb{P}_{n}\left(\Vert Z_{r, r^{\prime}, n} \Vert_{\infty} \geq \varepsilon \right) \leq 3 \sup_{1 \leq m \leq n} \mathbb{P}_{n}\left(\left| Z_{r, r^{\prime}, n}(m/n) \right| \geq \varepsilon /3 \right),
\end{eqnarray}

\noindent for all $\varepsilon >0$. On the one hand, the terms in the sum $Z_{r, r^{\prime}, n}(m/n)$ are independent centred random variables with variance bounded by $a/n$. On the other hand, these terms are also independent of the random variables $(\xi(k), 1 \leq k \leq n)$. Moreover, the number of summands in the sum $Z_{r, r^{\prime}, n}(m/n)$ is bounded by 
\begin{eqnarray*}
 \sum_{k=1}^{\lfloor nu \rfloor}  \sum_{i=1}^{\xi(k)+1} 1 \leq \sum_{k=1}^{n} (\xi(k)+1) =  n + B_{n} Y^{(0)}_{n}(1) = n - B_{n},
\end{eqnarray*}
\noindent under $\mathbb{P}_{n}$. By the Marcinkiewicz–Zygmund inequality, it is not difficult to see that, for $p \geq 2$,
\begin{eqnarray*}
\mathbb{E} [\left| Z_{r, r^{\prime}, n}(m/n) \right|^{p}] \leq C_{p} B_{n}^{-p/2} (1-B_{n}/n)^{p/2} (t_{r^{\prime}} - t_{r})^{p/2}, 
\end{eqnarray*}

\noindent for some constant $C_{p} >0$. So, (\ref{eq5}) follows from (\ref{eq5b}) and Chebyshev's inequality. 
\end{proof}

We have now all the ingredients to prove Theorem \ref{Theo2}.

\begin{proof}[Proof of Theorem \ref{Theo2}]
Theorem \ref{Theo2} is a consequence of Proposition \ref{Pro2}, Lemma \ref{lemma1} and Lemma \ref{lemma2}.
\end{proof}

\section{Proof of Theorem \ref{Theo3}} \label{Sec7}

In this section, we prove Theorem \ref{Theo3}. We start by developing a general approach for the convergence of fragmentation processes encoded by functions in $\mathbb{D}([0,1], \mathbb{R})$. Recall that $\mathbb{S}$ denotes the space defined in (\ref{eq21}) endowed with the $\ell^{1}$-norm. For an increasing function $h = (h(s), s \in [0,1]) \in \mathbb{D}([0,1], \mathbb{R})$, we write 
\begin{eqnarray*}
\mathbf{F}(h) \coloneqq (F_{1}(h), F_{2}(h), \dots  ) \in  \mathbb{S}
\end{eqnarray*}

\noindent for the sequence of the lengths of the intervals components of the complement of the support of the Stieltjes measure ${\rm d} h$, arranged in decreasing order; we tacitly understand $\mathbf{F}(h)$ as an infinite sequence, by completing with an infinite number of zero terms. Let $\text{Supp}({\rm d} h)$ denote the support of ${\rm d} h$ and note that $(0,1) \setminus \text{Supp}({\rm d} h)$ is the union of all open intervals on each of which the function $h$ is constant. For any $g = (g(s), s \in [0,1]) \in \mathbb{D}([0,1], \mathbb{R})$ such that $g(0) = 0$, let $\hat{g} = (\hat{g}(s), s \in [0,1])$ be given by
\begin{eqnarray*}
\hat{g}(s) \coloneqq \inf_{u \in [0, s]} g(u), \hspace*{4mm} s \in [0,1].
\end{eqnarray*}

\noindent Note that $-\hat{g}(s) = \sup_{u \in [0, s]} (-g(u))$, then $-\hat{g}$ is an increasing function in $\mathbb{D}([0,1], \mathbb{R})$. In particular, the Stieltjes measure ${\rm d} (-\hat{g})$ is well-defined and $\text{Supp}({\rm d} (-\hat{g}))$ is given by the set of points where the function $g$ reaches a new infimum. We call constancy interval of $-\hat{g}$ any interval component of $(0,1) \setminus \text{Supp}({\rm d} (-\hat{g}))$. Indeed, those constancy intervals corresponds to excursion intervals of $g$ above its infimum (or equivalently, excursion intervals of the function $g - \hat{g}$ above $0$). 
 
For $g = (g^{(t)}, t \geq 0) \in \mathbb{D}(\mathbb{R}_{+}, \mathbb{D}([0,1], \mathbb{R}))$, we let $g^{(t)} = (g^{(t)}(s), s \in [0,1]) \in  \mathbb{D}([0,1], \mathbb{R})$, for $t \geq 0$. Similarly, for $n \in \mathbb{N}$, we write $g_{n} = (g_{n}^{(t)}, t \geq 0) \in \mathbb{D}(\mathbb{R}_{+}, \mathbb{D}([0,1], \mathbb{R}))$ such that, for each $t \geq 0$, $g_{n}^{(t)} = (g_{n}^{(t)}(s), s \in [0,1]) \in  \mathbb{D}([0,1], \mathbb{R})$. If $g^{(t)}(0) = 0$ (resp.\ $g_{n}^{(t)}(0) = 0)$, we define $\hat{g}^{(t)} = (\hat{g}^{(t)}(s), s\in [0,1])$ (resp.\ $\hat{g}^{(t)}_{n} = (\hat{g}_{n}^{(t)}(s), s \in [0,1])$) by letting 
\begin{eqnarray*}
\hat{g}^{(t)}(s) \coloneqq \inf_{u \in [0, s]} g^{(t)}(u) \hspace*{3mm} \left(\text{resp.} \, \, \hat{g}^{(t)}_{n}(s) \coloneqq \inf_{u \in [0, s]} g^{(t)}_{n}(u) \right), \hspace*{4mm} s \in [0,1].
\end{eqnarray*}

The following result is the key ingredient in the proof of Theorem \ref{Theo3}. Recall that $\mathbb{S}_{1} \subset \mathbb{S}$ denotes the space of the elements of $\mathbb{S}$ with sum $1$.

\begin{lemma} \label{lemma5}
On some probability space $(\Omega, \mathcal{F}, \mathbb{P})$, let $(g_{n})_{n \geq 1}$ be a sequence of random elements of $\mathbb{D}(\mathbb{R}_{+}, \mathbb{D}([0,1], \mathbb{R}))$ such that $g_{n}^{(t)}(0) = 0$, for $n \in \mathbb{N}$ and $t \geq 0$. Suppose that there exists a random $g \in \mathbb{D}(\mathbb{R}_{+}, \mathbb{D}([0,1], \mathbb{R}))$ such that $g^{(t)}(0) = 0$, for $t \geq 0$, and 
\begin{itemize}
\item[(i)] $g_{n} \xrightarrow[]{d} g$, as $n \rightarrow \infty$, in the space $\mathbb{D}(\mathbb{R}_{+}, \mathbb{D}([0,1], \mathbb{R}))$.
\end{itemize}
\noindent Suppose further that for every fixed $t \geq 0$,
\begin{itemize}
\item[(ii)] $g^{(t)}(s) \wedge g^{(t)}(s-) > \hat{g}^{(t)}(s)$, for every $s \in (a,b)$ whenever $(a,b) \subset [0,1]$ is an interval of constancy for the function $-\hat{g}^{(t)}$. 

\item[(iii)] $\mathbf{F}(-\hat{g}^{(t)}) \in \mathbb{S}_{1}$,
\end{itemize}

\noindent where (ii) and (iii) hold almost surely. Then,
\begin{eqnarray*}
(\mathbf{F}(- \hat{g}_{n}^{(t)}), t \geq 0) \rightarrow (\mathbf{F}(-\hat{g}^{(t)}), t\geq 0), \hspace*{3mm} \text{as} \hspace*{2mm}  n \rightarrow \infty, 
\end{eqnarray*}
\noindent in the sense of weak convergence of finite-dimensional distributions in $\mathbb{D}(\mathbb{R}_{+}, \mathbb{S})$. 
\end{lemma}

\begin{proof}
By the Skorokhod representation theorem, we can and we will work in a probability space where the convergence in (i) together with (ii) and (iii) holds almost surely. By (i), there exists a dense subset $D$ of $\mathbb{R}_{+}$ such that for any fixed $k \in \mathbb{N}$ and collection $0 \leq t_{1} < t_{2} < \cdots < t_{k} < \infty$ with $t_{1}, \dots, t_{k} \in D$, we have that a.s.,
\begin{eqnarray*}
(g_{n}^{(t_{1})}, \dots, g_{n}^{(t_{k})}) \rightarrow (g^{(t_{1})}, \dots, g^{(t_{k})}), \hspace*{3mm} \text{as} \hspace*{3mm} n \rightarrow \infty,
\end{eqnarray*}

\noindent in $\mathbb{D}([0,1], \mathbb{R})^{\otimes k}$ (i.e., the $k$-fold space of $\mathbb{D}([0,1], \mathbb{R})$). Then \cite[Lemma 4]{Bertoin2001} implies that a.s.,
\begin{eqnarray*}
(\mathbf{F}(- \hat{g}_{n}^{(t_{1})}), \dots, \mathbf{F}(- \hat{g}_{n}^{(t_{k})})) \rightarrow (\mathbf{F}(-\hat{g}^{(t_{1})}), \dots, \mathbf{F}(-\hat{g}^{(t_{k})})), \hspace*{3mm} \text{as} \hspace*{3mm} n \rightarrow \infty,
\end{eqnarray*}

\noindent in $\mathbb{S}^{\otimes k}$ (i.e., the $k$-fold space of $\mathbb{S}$ equipped with the $\ell^{1}$-norm). Note that the conditions in \cite[Lemma 4]{Bertoin2001} are satisfied by our assumptions (in fact, one has to apply \cite[Lemma 4]{Bertoin2001} to $-g_{n}$ and $-g$).  This shows the convergence of the finite-dimensional distributions of the sequence of processes $((\mathbf{F}(- \hat{g}_{n}^{(t)}), t \geq 0))_{n \geq 1}$ to those of the process $(\mathbf{F}(- \hat{g}^{(t)}), t \geq 0)$. 
\end{proof}

Finally, we are in position to prove our main result Theorem \ref{Theo3}. 
 
\begin{proof}[Proof of Theorem \ref{Theo3}]
Let $\mathbf{t}_{n}$ be an $\alpha$-stable $\text{GW}$-tree of index $\alpha \in (1,2]$. Recall that $(B_{n})_{n \geq 1}$ denotes a sequence of positive real numbers satisfying (\ref{eq10}). For $t \geq 0$, let $W_{n}^{(t)}$ be the (normalized and time-scaled) Prim path defined in (\ref{eq13}) of the fragmentation forest at time $s_{n}(t) = 1 - (B_{n}/n)t$, i.e.\ $\mathbf{f}(s_{n}(t))$, associated to $\mathbf{t}_{n}$ and the i.i.d.\ uniform random weights $\mathbf{w}$. Define the process $I_{n}^{(t)} = (I_{n}^{(t)}(u), u \in [0,1])$ by letting
\begin{eqnarray*}
I_{n}^{(t)}(u) = \inf_{s \in [0,u]} W_{n}^{(t)}(s),  \hspace*{4mm} \text{for} \hspace*{2mm} s \in [0,1].
\end{eqnarray*}

\noindent Recall that $\mathbf{F}_{n}^{(\alpha)} = (\mathbf{F}_{n}^{(\alpha)}(t), t \geq 0)$ stands for the fragmentation process of $\mathbf{t}_{n}$ defined in (\ref{eq15}). From Lemma \ref{lemma4} and the preceding discussion, it is clear that $\mathbf{F}_{n}^{(\alpha)}(t) = \mathbf{F}( - I_{n}^{(t)})$, for $t \geq 0$. Let $Y_{\alpha}^{(t)}$ and $I_{\alpha}^{(t)}$ be the processes defined in (\ref{eq14}), and recall that the $\alpha$-stable fragmentation process, $\mathbf{F}^{(\alpha)} = (\mathbf{F}^{(\alpha)}(t), t \geq 0)$, is given by $\mathbf{F}^{(\alpha)}(t) = \mathbf{F}( - I_{\alpha}^{(t)})$, for $t \geq 0$. 

We prove that the processes $W_{n} = (W_{n}^{(t)}, t \geq 0)$ and $Y_{\alpha} = (Y_{\alpha}^{(t)}, t \geq 0)$ satisfy the conditions of Lemma \ref{lemma5}. Note that for all $t \geq 0$, $W_{n}^{(t)}(0) = Y_{\alpha}^{(t)}(0) = 0$. Moreover, Theorem \ref{Theo2} implies that $(W_{n}^{(t)}, t \geq 0) \rightarrow (Y_{\alpha}^{(t)}, t \geq 0)$, in distribution, as $n \rightarrow \infty$, in the space $\mathbb{D}(\mathbb{R}_{+}, \mathbb{D}([0,1], \mathbb{R}))$. We now verify that the process $Y_{\alpha}$ satisfies conditions (i), (ii) and (iii) of Lemma \ref{lemma5}. Indeed, (i) has been proven in Theorem \ref{Theo2}. The process $X_{\alpha}^{\rm br}$ has exchangeable increments due to the stationary and independent increments of the stable L\'evy process $X_{\alpha}$; see e.g., \cite[Chapters 11 and 16]{Kall2005}. Then, (ii) follows along the lines of the proof of Lemma 7 (i) in \cite{Bertoin2001} thanks to the property in 
(\ref{eq17}). To prove that $Y_{\alpha}^{(t)}$ fulfills condition (iii) for every $t \geq 0$, recall that the support of the Stieltjes measure ${\rm d}(- I_{\alpha}^{(t)})$ coincides with the ladder time set $\mathscr{L}^{\alpha}(t)$ of $Y_{\alpha}^{(t)}$, which is a random closed set with zero Lebesgue measure. The latter follows from \cite[Corollary 5, Chapter VII]{Bertoin1996} but alternatively, it can be deduced from (\ref{eq17}) by following the same argument as in \cite[Proof of Lemma 7]{Bertoin2001}. Since $\mathbf{F}(-I_{\alpha}^{(t)})$ is defined as the ranked sequence of the lengths of the open intervals in the canonical decomposition of $[0,1]/ \mathscr{L}^{\alpha}(t)$, condition (iii) follows.

Therefore, Lemma \ref{lemma5} shows the convergence of the finite-dimensional distributions of the sequence of processes $(\mathbf{F}_{n}^{(\alpha)})_{n \geq 1}$ to those of the process $\mathbf{F}^{(\alpha)}$. On the other hand, \cite[Corollary 6.1]{GbSvCe2025} (see also \cite[Remark 6.2]{GbSvCe2025} shows that $(\mathbf{F}_{n}^{(\alpha)})_{n \geq 1}$ is tight in $\mathbb{D}(\mathbb{R}_{+}, \mathbb{S})$, which concludes our proof.
\end{proof}

\section{Proof of Proposition \ref{Theo4}} \label{Sec6}

In this section, we prove Proposition \ref{Theo4}. The proof follows along the lines of the proof of Proposition 13 in Aldous and Pitman \cite{AldousPitmanI2000} (see also Theorem 3 in \cite{AldousPitman1998}). We provide enough details to convince the reader that everything can be carried out as in \cite{AldousPitmanI2000}, but also to make this work self contained. \\

\noindent \textbf{The $\alpha$-stable L\'evy tree.} Recall that an $\alpha$-stable L\'evy tree $\mathcal{T}_{\alpha} = (\mathcal{T}_{\alpha}, d_{\alpha}, \rho_{\alpha}, \mu_{\alpha})$ of index $\alpha \in (1,2]$ is a random compact rooted measure that arises naturally as the scaling limit of large $\alpha$-stable ${\rm GW}$-trees. More precisely, let $\mathbf{t}_{n}$ be an $\alpha$-stable ${\rm GW}$-tree, view it as a rooted metric measure tree $\mathbf{t}_{n} = (\mathbf{t}_{n}, d_{n}^{\text{gr}}, \rho_{n}, \mu_{n}^{\rm nod})$, where $\mathbf{t}_{n}$ is identified as its set of $n$ vertices $\{v_{1}, \dots, v_{n}\}$, $d_{n}^{\text{gr}}$ is the graph-distance on $\mathbf{t}_{n}$, $\rho_{n} \in \mathbf{t}_{n}$ is the root (the initial individual in the population) and $\mu_{n}^{\rm nod}  \coloneqq \frac{1}{n} \sum_{i=1}^{n} \delta_{v_{i}}$ is the uniform measure on the set of vertices  of $\mathbf{t}_{n}$; here $\delta_{v}$ is the Dirac measure in the point $v \in \mathbf{t}_{n}$. Let $(B_{n})_{n \geq 1}$ be a sequence of positive real numbers satisfying (\ref{eq10}) and consider the rescaled $\alpha$-stable ${\rm GW}$-tree $(B_{n}/n) \cdot \mathbf{t}_{n} = (\mathbf{t}_{n}, (B_{n}/n) \cdot d_{n}^{\text{gr}}, \rho_{n}, \mu_{n}^{\rm nod})$. Then it is well-known, by results of Aldous \cite{Al1993} and Duquesne \cite{Du2003}, that
\begin{eqnarray} \label{eq18}
(\mathbf{t}_{n}, (B_{n}/n) \cdot d_{n}^{\text{gr}}, \rho_{n}, \mu_{n}^{\rm nod}) \xrightarrow[ ]{d} (\mathcal{T}_{\alpha}, d_{\alpha}, \rho_{\alpha}, \mu_{\alpha}) , \hspace*{3mm} n \rightarrow \infty, 
\end{eqnarray}

\noindent for the {\sl pointed Gromov-Hausdorff-Prohorov} (pGHP) topology. (see for example  \cite[Proposition 9]{Miermont2009}, \cite[Theorem 2.5]{Abraham2013} and reference therein for background on the pGHP topology.) We list some useful properties of the $\alpha$-stable L\'evy tree and the rescaled $\alpha$-stable ${\rm GW}$-tree. 
\begin{itemize}
\item[({\bf T1})] The mass measure $\mu_{\alpha}$ is non-atomic and it is supported on ${\rm Lf}(\mathcal{T}_{\alpha})$, a.s.; see \cite[Theorem 4.6]{DuLegal2005}. 
\end{itemize}

\noindent For $k \in \mathbb{N}$, let $V_{1}^{n}, \dots, V_{k}^{n}$ be independent random vertices of $\mathbf{t}_{n}$ with common distribution $\mu_{n}^{\rm nod}$. Let $\mathcal{R}(\mathbf{t}_{n}, \mathbf{V}_{k}^{n})$ be the reduced subtree of $\mathbf{t}_{n}$ by its root and the vertices $\mathbf{V}^{n}_{k} = (V_{1}^{n}, \dots, V_{k}^{n})$ (i.e., $\mathcal{R}(\mathbf{t}_{n}, \mathbf{V}_{k}^{n})$ is viewed as the compact rooted metric space $(\mathcal{R}(\mathbf{t}_{n}, \mathbf{V}_{k}^{n}), d_{n}^{\text{gr}}, \rho)$, where $\mathcal{R}(\mathbf{t}_{n}, \mathbf{V}_{k}^{n})$ is identified as its set of vertices and the distance $d_{n}^{\text{gr}}$ is tacitly understood to be restricted to the appropriate space). Let also $(B_{n}/n) \cdot  \mathcal{R}(\mathbf{t}_{n}, \mathbf{V}_{k}^{n})$ be the space $\mathcal{R}(\mathbf{t}_{n}, \mathbf{V}_{k}^{n})$ with distances multiplied by $B_{n}/n$. Similarly, we let  $V_{1}, \dots, V_{k}$ be independent random points (leaves) of $\mathcal{T}_{\alpha}$ with common distribution $\mu_{\alpha}$, and write $\mathcal{R}(\mathcal{T}_{\alpha}, \mathbf{V}_{k})$ for the reduced subtree of $\mathcal{T}_{\alpha}$ by its root and the vertices $\mathbf{V}_{k} = (V_{1}, \dots, V_{k})$.  
\begin{itemize}
\item[({\bf T2})]  For every fixed $k \in \mathbb{N}$, $(B_{n}/n) \cdot  \mathcal{R}(\mathbf{t}_{n}, \mathbf{V}_{k}^{n}) \xrightarrow[ ]{d}  \mathcal{R}(\mathcal{T}_{\alpha}, \mathbf{V}_{k})$, as $n \rightarrow \infty$, for the {\sl pointed Gromov-Hausdorff} topology. This follows from (\ref{eq18}), \cite[Proposition 10]{Miermont2009} and \cite[Lemma 35]{Haas2012}.
\end{itemize}

\noindent Define the empirical (random) measures
\begin{eqnarray} \label{eq19}
\mu_{n,k}^{\rm nod}  \coloneqq \frac{1}{k} \sum_{i=1}^{k} \delta_{V_{i}^{n}} \hspace*{4mm} \text{and} \hspace*{4mm} \mu_{\alpha, k}  \coloneqq \frac{1}{k} \sum_{i=1}^{k} \delta_{V_{i}},
\end{eqnarray}

\begin{itemize}
\item[({\bf T3})]  The Glivenko-Cantelli Theorem implies that $\mu_{n,k}^{\rm nod} \rightarrow \mu_{n}^{\rm nod}$ and $\mu_{\alpha, k} \rightarrow \mu_{\alpha}$, almost surely, as $k \rightarrow \infty$, in the weakly sense.

\item[({\bf T4})]  Theorem 3 in \cite{Al1993} shows that the family of reduced subtrees $(\mathcal{R}(\mathcal{T}_{\alpha}, \mathbf{V}_{k}), k \in \mathbb{N})$ satisfies the so-called leaf-tight property, i.e.\ $\inf_{2 \leq i < \infty} d_{\alpha}( V_{1}, V_{i}) = 0$, almost surely.
\end{itemize}

\noindent \textbf{Exchangeable random partitions.} A partition $\Pi$ of $\mathbb{N}= \{1, 2, \dots \}$ is a countable collection $\Pi = (\Pi(i), i \in \mathbb{N})$ of pairwise disjoint subsets of $\mathbb{N}$ (also called blocks) such that $\bigcup_{i \in \mathbb{N}} \Pi(i) = \mathbb{N}$, which are always enumerated in increasing order of their least element, that is, $\min \Pi(i) \leq \min \Pi(j)$ for every $1 \leq i \leq j$, with the convention $\inf \emptyset = \infty$. For e.g., an equivalence relation $\sim$ on the set $\mathbb{N}$ can be identified with a partition of $\mathbb{N}$ into equivalence classes. In particular, a random equivalence relation on $\mathbb{N}$ can be identified with a random partition of $\mathbb{N}$. Let $\mathcal{P}_{\infty}$ be the set of partitions of the set of positive integers $\mathbb{N}$. Lemma 2.6 in \cite{Bertoin2006} shows that $\mathcal{P}_{\infty}$ can be endowed with an ultra-metric $d_{\mathcal{P}_{\infty}}$ such that $(\mathcal{P}_{\infty}, d_{\mathcal{P}_{\infty}})$ is compact. An exchangeable random partition $\Pi$ is a $\mathcal{P}_{\infty}$-valued random variable whose restriction $\Pi_{k} = \Pi |_{[k]}$ to the set $[k] \coloneqq \{1, \dots, k\}$  has an invariant distribution under the action of permutations of $[k]$, for every $k \in \mathbb{N}$. 

Following Kingman's theory \cite{Kingman1978}, we recall some useful properties of exchangeable random partitions. For $k \in \mathbb{N}$ and a partition $\Pi \in \mathcal{P}_{\infty}$, let $\Pi_{k} = (\Pi_{k}(i), i \in \mathbb{N})$ be the restriction of $\Pi$ to $[k]$, and let $\# \Pi_{k}^{\downarrow} = (\# \Pi_{k}^{\downarrow}(i), i \in \mathbb{N})$ be the decreasing rearrangement of the block sizes (number of elements) of $\Pi_{k}$ such that $\# \Pi_{k}^{\downarrow}(i) = 0$ whenever $\Pi_{k}$ has fewer than $i$ blocks. Let $\mathbb{S}_{\leq 1} \subset \mathbb{S}$ be the space of the elements of $\mathbb{S}$ with sum less than or equal to $1$. Recall also that $\mathbb{S}_{1} \subset \mathbb{S}$ denotes the space of the elements of $\mathbb{S}$ with sum $1$.

\begin{itemize}
\item[({\bf P1})] Let $\Pi$ be an exchangeable random partition. Theorem 2.1 in \cite{Bertoin2006} and the Fatou's lemma show that the asymptotic ranked frequencies (in decreasing order)
\begin{eqnarray*}
|\Pi(i)|^{\downarrow} \coloneqq \lim_{k \rightarrow \infty} \frac{\# \Pi_{k}^{\downarrow}(i)}{k}, \hspace*{3mm} \text{for} \hspace*{2mm} i \in \mathbb{N}, \hspace*{2mm}  \text{exist a.s. and} \hspace*{2mm} (|\Pi(i)|^{\downarrow}, i \in \mathbb{N}) \in \mathbb{S}_{\leq 1}.
\end{eqnarray*}

\item[({\bf P2})] $(|\Pi(i)|^{\downarrow}, i \in \mathbb{N}) \in \mathbb{S}_{1}$ a.s.\ if and only if $\{1\}$ is not a class (i.e., the singleton $\{1\}$ is not a block) of $\Pi$ a.s.; see \cite[Proposition 2.8]{Bertoin2006}.

\item[({\bf P3})] For each $n \in \mathbb{N} \cup \{\infty\}$, let $\Pi^{(n)}$ be an exchangeable random partition, and write $(|\Pi^{(n)}(i)|^{\downarrow}, i \in \mathbb{N})$ for the sequence of asymptotic ranked frequencies of its blocks in decreasing order. For $k \in \mathbb{N}$, let $\mathcal{P}_{k}$ be the set of partitions of $[k]$ endowed with the discrete topology. Then, Theorem 11 in \cite{AldousPitmanI2000} (see also, \cite[Proposition 2.9]{Bertoin2006}) implies that 
\begin{eqnarray*}
\Pi^{(n)} |_{[k]} \xrightarrow[ ]{d}  \Pi^{(\infty)} |_{[k]}, \hspace*{3mm} \text{as} \hspace*{2mm} n \rightarrow \infty, \hspace*{2mm} \text{for each} \hspace*{2mm} k \in \mathbb{N}, \hspace*{2mm} \text{in the space} \hspace*{2mm} \mathcal{P}_{k}
\end{eqnarray*}

\noindent if and only if
\begin{eqnarray} \label{eq20}
(|\Pi^{(n)}(i)|^{\downarrow}, i \in \mathbb{N}) \xrightarrow[ ]{d}  (|\Pi^{(\infty)}(i)|^{\downarrow}, i \in \mathbb{N}), \hspace*{3mm} \text{as} \hspace*{2mm} n \rightarrow \infty, \hspace*{2mm} \text{in the space} \hspace*{2mm} \mathbb{S}_{\leq 1},
\end{eqnarray}

\noindent where $\mathbb{S}_{\leq 1}$ is given the topology of pointwise convergence (or equivalently, the uniform distance in \cite[Proposition 2.1]{Bertoin2006} which makes $\mathbb{S}_{\leq 1}$ compact).
\end{itemize}

\begin{lemma} \label{corollary4}
Suppose that (\ref{eq20}) holds and that $(|\Pi^{(\infty)}(i)|^{\downarrow}, i \in \mathbb{N}) \in \mathbb{S}_{1}$ almost surely. Then, 
\begin{eqnarray*} 
(|\Pi^{(n)}(i)|^{\downarrow}, i \in \mathbb{N}) \xrightarrow[ ]{d}  (|\Pi^{(\infty)}(i)|^{\downarrow}, i \in \mathbb{N}), \hspace*{3mm} \text{as} \hspace*{2mm} n \rightarrow \infty, \hspace*{2mm} \text{in the space} \hspace*{2mm} (\mathbb{S}_{1}, \ell_{1}).
\end{eqnarray*}
\end{lemma}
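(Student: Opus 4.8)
The plan is to reduce the statement to an almost-sure convergence via Skorokhod's representation theorem and then to upgrade pointwise convergence to $\ell_{1}$-convergence by a Scheffé-type argument driven by the hypothesis that the limiting frequencies sum to $1$.

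First I would record that $\mathbb{S}_{\leq 1}$ equipped with the topology of pointwise convergence is a compact metrizable space. Indeed, by \cite[Proposition 2.1]{Bertoin2006} the uniform distance $d_{\infty}(\mathbf{s}, \mathbf{s}') \coloneqq \sup_{i \geq 1} |s_{i} - s_{i}'|$ makes $\mathbb{S}_{\leq 1}$ a compact metric space, and on $\mathbb{S}_{\leq 1}$ the $d_{\infty}$-topology coincides with that of pointwise convergence: if $\mathbf{s}, \mathbf{s}^{(n)} \in \mathbb{S}_{\leq 1}$ with $s^{(n)}_{i} \to s_{i}$ for every $i$, then the monotonicity together with $\sum_{j} s^{(n)}_{j} \leq 1$ forces $s^{(n)}_{i} \vee s_{i} \leq 1/i$, so the tails are controlled uniformly in $n$ and hence $d_{\infty}(\mathbf{s}^{(n)}, \mathbf{s}) \to 0$. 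Consequently (\ref{eq20}) is convergence in distribution on a Polish space, and Skorokhod's representation theorem supplies, on a common probability space, random variables $\mathbf{X}^{(n)} \stackrel{d}{=} (|\Pi^{(n)}(i)|^{\downarrow}, i \in \mathbb{N})$ for $n \in \mathbb{N} \cup \{\infty\}$ with $\mathbf{X}^{(n)} \to \mathbf{X}^{(\infty)}$ almost surely; in particular $X^{(n)}_{i} \to X^{(\infty)}_{i}$ a.s.\ for each $i \in \mathbb{N}$.

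Second, I would upgrade this to $\ell_{1}$-convergence. The coordinates of $\mathbf{X}^{(n)}$ are nonnegative with $\sum_{i} X^{(n)}_{i} \leq 1$, while $\sum_{i} X^{(\infty)}_{i} = 1$ a.s.\ by hypothesis. Fatou's lemma yields $1 = \sum_{i} X^{(\infty)}_{i} \leq \liminf_{n} \sum_{i} X^{(n)}_{i} \leq 1$, so $\sum_{i} X^{(n)}_{i} \to \sum_{i} X^{(\infty)}_{i}$ a.s. Applying Scheffé's lemma with counting measure on $\mathbb{N}$: since $0 \leq (X^{(\infty)}_{i} - X^{(n)}_{i})^{+} \leq X^{(\infty)}_{i}$ with $\sum_{i} X^{(\infty)}_{i} < \infty$ and $(X^{(\infty)}_{i} - X^{(n)}_{i})^{+} \to 0$ for every $i$, dominated convergence gives $\sum_{i} (X^{(\infty)}_{i} - X^{(n)}_{i})^{+} \to 0$ a.s., whence
\[
\sum_{i=1}^{\infty} |X^{(n)}_{i} - X^{(\infty)}_{i}| = \Big( \sum_{i=1}^{\infty} X^{(n)}_{i} - \sum_{i=1}^{\infty} X^{(\infty)}_{i} \Big) + 2 \sum_{i=1}^{\infty} (X^{(\infty)}_{i} - X^{(n)}_{i})^{+} \longrightarrow 0,
\]
almost surely as $n \to \infty$; that is, $\mathbf{X}^{(n)} \to \mathbf{X}^{(\infty)}$ a.s.\ in $(\mathbb{S}_{\leq 1}, \ell_{1})$.

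Finally, almost sure convergence implies convergence in distribution, so $(|\Pi^{(n)}(i)|^{\downarrow}, i \in \mathbb{N}) \xrightarrow{d} (|\Pi^{(\infty)}(i)|^{\downarrow}, i \in \mathbb{N})$ in $(\mathbb{S}_{\leq 1}, \ell_{1})$, and since the limit takes values in $\mathbb{S}_{1}$, which is closed in $(\mathbb{S}_{\leq 1}, \ell_{1})$ because $\ell_{1}$-convergence preserves the value of the sum, this is the asserted convergence in $(\mathbb{S}_{1}, \ell_{1})$. I do not anticipate a genuine obstacle: the only two points needing care are the verification that Skorokhod's theorem applies — i.e.\ the identification of the pointwise-convergence topology with the compact metric $d_{\infty}$ on $\mathbb{S}_{\leq 1}$ — and the ``no loss of mass'' step, which is exactly the Fatou-plus-Scheffé sandwich made possible by the assumption $(|\Pi^{(\infty)}(i)|^{\downarrow}, i \in \mathbb{N}) \in \mathbb{S}_{1}$ a.s.
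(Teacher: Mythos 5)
Your proof is correct and follows exactly the route the paper indicates (its proof reads, in full, ``a simple application of Fatou's lemma and Scheff\'e's lemma''): Skorokhod representation to reduce to almost-sure pointwise convergence, Fatou to rule out loss of mass using $\sum_i |\Pi^{(\infty)}(i)|^{\downarrow} = 1$, and Scheff\'e to upgrade to $\ell_1$. The only difference is that you spell out the details the paper omits, including the useful check that the pointwise topology on $\mathbb{S}_{\leq 1}$ agrees with the compact metric of \cite[Proposition 2.1]{Bertoin2006}, which justifies the Skorokhod step.
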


\begin{proof}
The proof follows by a simple application of Fatou's lemma and Scheff\'e's lemma.
\end{proof}
 
\noindent \textbf{Fragmentation processes.} Following ideas of Aldous and Pitman \cite{AldousPitmanI2000}, the framework of exchangeable random partitions provides a different interpretation for the fragmentation processes associated to $\alpha$-stable L\'evy trees and $\alpha$-stable ${\rm GW}$-trees. 

Consider an $\alpha$-stable L\'evy tree $\mathcal{T}_{\alpha} = (\mathcal{T}_{\alpha}, d_{\alpha}, \rho_{\alpha}, \mu_{\alpha})$ together with a Poisson point process of cuts on its skeleton with intensity ${\rm d}t \otimes \lambda_{\alpha}({\rm d}v)$ on $[0, \infty) \times \mathcal{T}_{\alpha}$, where $\lambda_{\alpha}$ is the length measure associated to $\mathcal{T}_{\alpha}$. Recall that for all $t \geq 0$ we defined an equivalence relation $\sim_{t}$ on $\mathcal{T}_{\alpha}$ by saying that $v \sim_{t} w$, for $v, w \in \mathcal{T}_{\alpha}$, if and only if no atom of the Poisson process that has appeared before time $t$ belongs to the path $[v, w]$. We use the above to define a random equivalence relation on $\mathbb{N}$. Let $V_{1}, V_{2}, \dots$ be a sequence of independent random points of $\mathcal{T}_{\alpha}$ with common distribution $\mu_{\alpha}$. For $t \geq 0$ and $i,j \in \mathbb{N}$, we say $i \sim_{\alpha, t} j $ if and only if $V_{i} \sim_{t} V_{j}$. In particular, we let $\Pi_{\alpha}^{(t)} = (\Pi_{\alpha}^{(t)}(i), i \in \mathbb{N})$ be the random partition of $\mathbb{N}$ induced by the equivalence classes of the equivalence relation $\sim_{\alpha, t}$ on $\mathbb{N}$. 

\begin{lemma} \label{lemma6}
For every $t \geq 0$, $\Pi_{\alpha}^{(t)}$ is exchangeable. In particular, $\Pi_{\alpha}^{(t)}$ is proper a.s., i.e., the asymptotic ranked frequencies $(|\Pi_{\alpha}^{(t)}(i)|^{\downarrow}, i \in \mathbb{N})$ (in decreasing order) of $\Pi_{\alpha}^{(t)}$ belongs to $\mathbb{S}_{1}$ almost surely.
\end{lemma}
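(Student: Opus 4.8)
The plan is to obtain exchangeability directly from the construction, and then to deduce properness by combining Kingman's criterion (P2) with the leaf-tightness property (T4). Throughout, write $\mathcal{N}$ for the Poisson point process of cuts on the skeleton of $\mathcal{T}_\alpha$, with intensity $\mathrm{d}s\otimes\lambda_\alpha(\mathrm{d}v)$, which is independent of the i.i.d.\ sample $(V_i)_{i\ge 1}$ from $\mu_\alpha$.

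\emph{Exchangeability.} First I would check that, for each fixed $t\ge 0$, the relation $\sim_{\alpha,t}$ is genuinely an equivalence relation on $\mathbb{N}$: reflexivity and symmetry are immediate, and transitivity follows from the tree inclusion $[V_i,V_k]\subseteq [V_i,V_j]\cup[V_j,V_k]$, which shows that if neither $[V_i,V_j]$ nor $[V_j,V_k]$ carries an atom of $\mathcal{N}$ before time $t$, then neither does $[V_i,V_k]$. Hence $\Pi_\alpha^{(t)}$ is a well-defined $\mathcal{P}_\infty$-valued random variable. For each $k\in\mathbb{N}$, the restriction $\Pi_\alpha^{(t)}|_{[k]}$ is a fixed measurable function of $\big((V_1,\dots,V_k),\mathcal{N}\big)$ which intertwines permutations of $[k]$ with permutations of its first argument. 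Since $(V_i)_{i\ge 1}$ is i.i.d.\ with law $\mu_\alpha$ and independent of $\mathcal{N}$, for every permutation $\sigma$ of $[k]$ we have $\big((V_{\sigma(1)},\dots,V_{\sigma(k)}),\mathcal{N}\big)\stackrel{d}{=}\big((V_1,\dots,V_k),\mathcal{N}\big)$, so the law of $\Pi_\alpha^{(t)}|_{[k]}$ is invariant under $\sigma$. As $k$ is arbitrary, $\Pi_\alpha^{(t)}$ is exchangeable.

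\emph{Properness.} By (P2) it is enough to show that $\{1\}$ is almost surely not a block of $\Pi_\alpha^{(t)}$, i.e.\ that almost surely there is $j\ge 2$ with $1\sim_{\alpha,t}j$; equivalently, that the geodesic $[V_1,V_j]$ carries no atom of $\mathcal{N}$ before time $t$. I would argue conditionally on $(\mathcal{T}_\alpha,(V_i)_{i\ge 1})$. By the leaf-tightness property (T4), $\inf_{i\ge 2}d_\alpha(V_1,V_i)=0$ almost surely, so one can (measurably) select a sequence $(i_m)_{m\ge 1}$ of indices in $\{2,3,\dots\}$ with $d_\alpha(V_1,V_{i_m})\to 0$. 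Conditionally on $(\mathcal{T}_\alpha,(V_i)_{i\ge 1})$, the process $\mathcal{N}$ is an independent Poisson process, and since $\lambda_\alpha([V_1,V_{i_m}])=d_\alpha(V_1,V_{i_m})$, the number of its atoms in $[0,t]\times[V_1,V_{i_m}]$ is Poisson with mean $t\,d_\alpha(V_1,V_{i_m})$, so that
\begin{eqnarray*}
\mathbb{P}\Big( [V_1,V_{i_m}] \text{ carries no atom of } \mathcal{N} \text{ before time } t \,\Big|\, \mathcal{T}_\alpha,(V_i)_{i\ge 1} \Big) = e^{-t\,d_\alpha(V_1,V_{i_m})} \xrightarrow[m\to\infty]{} 1 .
\end{eqnarray*}
Hence the conditional probability that $[V_1,V_{i_m}]$ carries no atom of $\mathcal{N}$ before time $t$ for at least one $m$ is at least $\sup_{m\ge 1}e^{-t\,d_\alpha(V_1,V_{i_m})}=1$. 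Therefore, almost surely, some $m$ satisfies $1\sim_{\alpha,t}i_m$, so $\{1\}$ is not a block; integrating over $(\mathcal{T}_\alpha,(V_i)_{i\ge 1})$ and invoking (P2) gives $(|\Pi_\alpha^{(t)}(i)|^{\downarrow},i\in\mathbb{N})\in\mathbb{S}_1$ almost surely.

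\emph{Main obstacle.} The argument is short once (T4) is available, and indeed leaf-tightness is the only substantive input: for a generic finite or countable subtree the $\mu_\alpha$-typical component could in principle collapse to a single leaf, and it is precisely (T4) that rules this out. The remaining points — the measurable selection of $(i_m)_{m\ge 1}$ and the conditional independence of $\mathcal{N}$ from $(\mathcal{T}_\alpha,(V_i)_{i\ge 1})$ — are routine. As a by-product, combining the above with the strong law of large numbers, which identifies the asymptotic frequency of the block of $j$ in $\Pi_\alpha^{(t)}$ with the $\mu_\alpha$-mass of the component of $\mathcal{T}_\alpha$ containing $V_j$, one recovers $\mathbf{F}_{\mathcal{T}_\alpha}(t)\in\mathbb{S}_1$ almost surely.
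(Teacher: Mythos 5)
Your proof is correct and follows essentially the same route as the paper: exchangeability from the permutation-invariance of the law of the i.i.d.\ sample (equivalently, of the reduced subtrees) together with the independence of the Poisson cut process, and properness via (P2) after using the conditional probability $e^{-t\,d_\alpha(V_1,V_j)}$ of $1\sim_{\alpha,t}j$ and the leaf-tightness property (T4) to show that $\{1\}$ is almost surely not a singleton block. Your version merely spells out the conditioning and the selection of the subsequence $(i_m)$, which the paper leaves implicit.
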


\begin{proof}
The first claim follows from the fact that for every $k \in \mathbb{N}$ the distribution of the reduced subtree $\mathcal{R}(\mathcal{T}_{\alpha}, \mathbf{V}_{k})$ of $\mathcal{T}_{\alpha}$ is invariant under any permutation of the points (leaves) $V_{1}, \dots, V_{k}$, i.e.\ $\mathcal{R}(\mathcal{T}_{\alpha}, \mathbf{V}_{k})$. To prove the second part, note that the probability that $1 \sim_{\alpha, t} j$ is $\exp(-td_{\alpha}(V_{1}, V_{j}))$, for $j \geq 2$. Then ({\bf T4}) implies that $\{1\}$ is not a class a.s., and our claim follows from ({\bf P2}). 
\end{proof}

\begin{corollary} \label{corollary2}
For every $t \geq 0$, we have that $\mathbf{F}_{\mathcal{T}_{\alpha}}(t) = (|\Pi_{\alpha}^{(t)}(1)|^{\downarrow}, |\Pi_{\alpha}^{(t)}(2)|^{\downarrow}, \dots )$ almost surely. 
\end{corollary}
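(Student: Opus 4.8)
The plan is to recognise $\Pi_\alpha^{(t)}$, after conditioning on the $\alpha$-stable L\'evy tree and its cuts, as a Kingman ``paintbox'' partition directed by the sequence of $\mu_\alpha$-masses of the connected components $(\mathcal{T}_{\alpha,\ell}^{(t)}, \ell \geq 1)$, and then to extract the asymptotic frequencies via the law of large numbers. Concretely, I would fix $t \geq 0$ and work conditionally on $\mathcal{T}_\alpha$ together with the (countable) collection of atoms of the Poisson point process that have appeared before time $t$. These atoms lie on $\mathrm{Sk}(\mathcal{T}_\alpha)$ and completely determine the equivalence relation $\sim_t$, hence the at most countable family $(\mathcal{T}_{\alpha,\ell}^{(t)}, \ell \geq 1)$ of components ranked by decreasing $\mu_\alpha$-mass. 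Using ({\bf T1}) --- $\mu_\alpha$ is non-atomic and carried by $\mathrm{Lf}(\mathcal{T}_\alpha)$, whereas the atoms sit on the skeleton --- one checks that $\mu_\alpha$-almost every point of $\mathcal{T}_\alpha$ belongs to exactly one component, that each $\mathcal{T}_{\alpha,\ell}^{(t)}$ is $\mu_\alpha$-measurable, and that $\sum_{\ell \geq 1} \mu_\alpha(\mathcal{T}_{\alpha,\ell}^{(t)}) = 1$.

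Next, since $V_1, V_2, \dots$ are i.i.d.\ with law $\mu_\alpha$ and independent of the cuts, almost surely every $V_j$ lies in a unique component, say $\mathcal{T}_{\alpha,\ell(j)}^{(t)}$, and for each fixed $\ell$ the events $\{V_j \in \mathcal{T}_{\alpha,\ell}^{(t)}\}$, $j \geq 1$, are i.i.d.\ Bernoulli with parameter $\mu_\alpha(\mathcal{T}_{\alpha,\ell}^{(t)})$. By the very definition of $\sim_{\alpha,t}$ one has $i \sim_{\alpha,t} j$ if and only if $\ell(i) = \ell(j)$, so the blocks of $\Pi_\alpha^{(t)}$ are precisely the nonempty sets $\{j \in \mathbb{N} : \ell(j) = \ell\}$, $\ell \geq 1$. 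The strong law of large numbers then gives $\tfrac{1}{k}\#\{ j \leq k : \ell(j) = \ell \} \to \mu_\alpha(\mathcal{T}_{\alpha,\ell}^{(t)})$ almost surely as $k \to \infty$ for each $\ell$, the second Borel--Cantelli lemma shows the corresponding block is infinite whenever its mass is positive, and a union bound over the countably many $\ell$ with zero mass shows that no $V_j$ falls into such a component. Hence the multiset of asymptotic frequencies of the blocks of $\Pi_\alpha^{(t)}$, completed with zeros, equals $(\mu_\alpha(\mathcal{T}_{\alpha,\ell}^{(t)}), \ell \geq 1)$; taking decreasing rearrangements on both sides and invoking Lemma \ref{lemma6} (so that $(|\Pi_\alpha^{(t)}(i)|^\downarrow, i \in \mathbb{N}) \in \mathbb{S}_1$) yields $(|\Pi_\alpha^{(t)}(i)|^\downarrow, i \in \mathbb{N}) = \mathbf{F}_{\mathcal{T}_\alpha}(t)$ conditionally almost surely, and Fubini's theorem removes the conditioning.

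Essentially everything above is routine; the step I expect to demand a little care is the bookkeeping that identifies positive-mass components with positive-frequency blocks and matches the trailing zeros on the two sides --- which is exactly where properness (Lemma \ref{lemma6}) is used --- together with checking measurability of the components, which reduces to writing the component through some point $v$ as the countable intersection over the atoms $a$ of the sets $\{w : a \notin [v,w]\}$ and appealing to ({\bf T1}).
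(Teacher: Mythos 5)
Your proposal is correct and follows essentially the same route as the paper: both recognise $\Pi_\alpha^{(t)}$ as a Kingman paintbox directed by the $\mu_\alpha$-masses of the components $\mathcal{T}_{\alpha,\ell}^{(t)}$ and extract the asymptotic block frequencies by a law-of-large-numbers argument, the paper via ({\bf T3}) and ({\bf P1}), you via the strong law applied component by component together with Lemma \ref{lemma6} for properness. Your version is somewhat more explicit (SLLN per block, Borel--Cantelli, measurability of components), but it is the same argument in substance.
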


\begin{proof}
For $k \in \mathbb{N}$, let $\Pi_{\alpha, k}^{(t)} = (\Pi_{\alpha, k}^{(t)}(i), i \in \mathbb{N})$ be the restriction of $\Pi_{\alpha}^{(t)}$ to $[k]$, and let $\# \Pi_{\alpha, k}^{(t), \downarrow} = (\#\Pi_{\alpha, k}^{(t), \downarrow}(i), i \in \mathbb{N})$ be the decreasing rearrangement of the block sizes of $\Pi_{\alpha, k}^{(t)}$ such that $\#\Pi_{\alpha, k}^{(t), \downarrow}(i) = 0$ whenever $\Pi_{\alpha, k}^{(t)}$ has fewer than $i$ blocks. Let $A_{\alpha, 1}^{(t)}, A_{\alpha, 2}^{(t)}, \dots$ be the distinct equivalence classes for $\sim_{t}$. Then, $(\#\Pi_{\alpha, k}^{(t), \downarrow}(1), \#\Pi_{\alpha, k}^{(t), \downarrow}(2), \dots)$ is equal to the ranked vector $(\mu_{\alpha,k}(A_{\alpha, 1}^{(t)}), \mu_{\alpha,k}(A_{\alpha, 2}^{(t)}), \dots)$ in decreasing order. Thus, our claim follows from ({\bf T3}) and ({\bf P1}). 
\end{proof}

Consider now the (rescaled) $\alpha$-stable ${\rm GW}$-tree $(B_{n}/n) \cdot \mathbf{t}_{n} = (\mathbf{t}_{n}, (B_{n}/n) \cdot d_{n}^{\text{gr}}, \rho_{n}, \mu_{n}^{\rm nod})$, where $(B_{n})_{n \geq 1}$ is a sequence of positive real numbers satisfying (\ref{eq10}). For $t \geq 0$, recall that the fragmentation forest at time $s_{n}(t) = 1 - (B_{n}/n)t$, that is $\mathbf{f}_{n}(s_{n}(t))$, is obtained by keeping those edges in $\mathbf{t}_{n}$ with uniform weight smaller than $s_{n}(t)$. As for the fragmentation process of the $\alpha$-stable L\'evy tree, we can define a random equivalence relation on $\mathbb{N}$. Let $V_{1}^{n}, V_{2}^{n}, \dots$ be a sequence of independent random vertices of $\mathbf{t}_{n}$ with common distribution $\mu_{n}^{\rm nod}$. For $t \geq 0$ and $i,j \in \mathbb{N}$, we say 
$i \sim_{n, t} j$ if and only if there is no cut edge on the path from $V_{i}^{n}$ to $V_{j}^{n}$ before time $s_{n}(t)$. In particular, we let $\Pi_{n}^{(t)} = (\Pi_{n}^{(t)}(i), i \in \mathbb{N})$ be the random partition of $\mathbb{N}$ induced by the equivalence classes of the equivalence relation $\sim_{n, t}$ on $\mathbb{N}$. 

\begin{lemma} \label{lemma7}
For every $t \geq 0$, $\Pi_{n}^{(t)}$ is exchangeable. In particular, $\mathbf{F}_{n}^{(\alpha)}(t) = (|\Pi_{n}^{(t)}(1)|^{\downarrow}, |\Pi_{n}^{(t)}(2)|^{\downarrow}, \dots )$ a.s., where $(|\Pi_{n}^{(t)}(i)|^{\downarrow}, i \in \mathbb{N})$ are the asymptotic ranked frequencies of $\Pi_{n}^{(t)}$ in decreasing order. 
\end{lemma}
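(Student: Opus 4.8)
The plan is to argue exactly as in the proofs of Lemma~\ref{lemma6} and Corollary~\ref{corollary2}, but with the fixed finite tree $\mathbf{t}_{n}$ in place of $\mathcal{T}_{\alpha}$. First I would record the key conditional description of the relation: given the weighted tree $(\mathbf{t}_{n},\mathbf{w})$ — equivalently, given the fragmentation forest $\mathbf{f}_{n}(s_{n}(t))$ — one has $i\sim_{n,t}j$ if and only if $V_{i}^{n}$ and $V_{j}^{n}$ lie in the same connected component of $\mathbf{f}_{n}(s_{n}(t))$, since ``no cut edge on the path before time $t$'' means precisely that every edge of the path $[V_{i}^{n},V_{j}^{n}]$ is present in $\mathbf{f}_{n}(s_{n}(t))$. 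In particular $\Pi_{n}^{(t)}$ has at most $\#\mathbf{CC}(\mathbf{f}_{n}(s_{n}(t)))\le n$ blocks, namely the nonempty sets $\{i\in\mathbb{N}: V_{i}^{n}\in C\}$ as $C$ ranges over the connected components of $\mathbf{f}_{n}(s_{n}(t))$.

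For exchangeability, I would use that the sequence $(V_{i}^{n})_{i\ge1}$ is i.i.d.\ with law $\mu_{n}^{\rm nod}$ and independent of $(\mathbf{t}_{n},\mathbf{w})$; hence for every $k\in\mathbb{N}$ and every permutation $\sigma$ of $[k]$ the vector $(V_{\sigma(1)}^{n},\dots,V_{\sigma(k)}^{n})$ has the same law as $(V_{1}^{n},\dots,V_{k}^{n})$. Since, conditionally on $(\mathbf{t}_{n},\mathbf{w})$, the restriction $\Pi_{n}^{(t)}|_{[k]}$ is the deterministic and symmetric function of $(V_{1}^{n},\dots,V_{k}^{n})$ described above, its law is invariant under $\sigma$. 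As this holds for all $k$, $\Pi_{n}^{(t)}$ is exchangeable, so by (\textbf{P1}) its asymptotic ranked frequencies $(|\Pi_{n}^{(t)}(i)|^{\downarrow},i\in\mathbb{N})$ exist almost surely.

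To identify them, I would invoke the strong law of large numbers for sampling with replacement: writing $\mu_{n,k}^{\rm nod}=\frac1k\sum_{i=1}^{k}\delta_{V_{i}^{n}}$ as in (\ref{eq19}), almost surely $\mu_{n,k}^{\rm nod}(C)\to\mu_{n}^{\rm nod}(C)=|C|/n$ as $k\to\infty$ for each of the finitely many components $C$ of $\mathbf{f}_{n}(s_{n}(t))$ (the discrete counterpart of (\textbf{T3})). Since $|C|/n>0$, each $C$ is almost surely hit infinitely often, so the blocks of $\Pi_{n}^{(t)}$ are in bijection with the components $C$, the asymptotic frequency of the block associated to $C$ being $|C|/n$. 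Ranking in decreasing order, and recalling from (\ref{eq15}) that $\mathbf{F}_{n}^{(\alpha)}(t)$ equals $1/n$ times the ranked sequence of sizes of the connected components of $\mathbf{f}_{n}(s_{n}(t))$ (with $s_{n}(t)=1-(B_{n}/n)t$), we obtain $(|\Pi_{n}^{(t)}(i)|^{\downarrow},i\in\mathbb{N})=\mathbf{F}_{n}^{(\alpha)}(t)$ almost surely; in particular $\mathbf{F}_{n}^{(\alpha)}(t)\in\mathbb{S}_{1}$.

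There is essentially no genuine obstacle here: exchangeability is immediate from the i.i.d.\ nature of the sampled vertices, and the identification of the frequencies is a one-line law-of-large-numbers argument because $n$ is fixed and $\mathbf{f}_{n}(s_{n}(t))$ has only finitely many components, each of strictly positive $\mu_{n}^{\rm nod}$-mass. The one point worth stating carefully is that this positivity forces every component to be represented in the limit, which is what makes the ranked frequencies coincide with the normalized component sizes rather than merely dominate them — the exact analogue of the role played by (\textbf{T3}) and (\textbf{P1}) in the proof of Corollary~\ref{corollary2}.
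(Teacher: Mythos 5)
Your proof is correct and follows exactly the route the paper intends: the paper's own proof of Lemma \ref{lemma7} is just the remark that it ``follows along the lines of the proofs of Lemma \ref{lemma6} and Corollary \ref{corollary2}'', and you have supplied precisely those details in the discrete setting (exchangeability from the i.i.d.\ sampling of vertices given $(\mathbf{t}_{n},\mathbf{w})$, then the law of large numbers to identify the block frequencies with the normalized component sizes). No gaps.
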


\begin{proof}
This follows along the lines of the proofs of Lemma \ref{lemma6} and Corollary \ref{corollary2} 
\end{proof}

Now we are able to prove Proposition \ref{Theo4}. 

\begin{proof}[Proof of Proposition \ref{Theo4}] 
Let $\mathbf{t}_{n}$ be an $\alpha$-stable ${\rm GW}$-tree, and for every fixed $t \geq 0$, view the (time-scaled) continuous cutting-down procedure of $\mathbf{t}_{n}$ as a (rescaled) Bernoulli process of cuts on its set of edges, that is, every edge of $\mathbf{t}_{n}$ is cut at time $t$ with probability $(B_{n}/n)t$. Then, at time $t \geq 0$, the sequence of sizes of the connected components of $\mathbf{t}_{n}$ in decreasing order and renormalized by a factor $1/n$ is given by $\mathbf{F}_{n}^{(\alpha)}(t)$. For every $k \in \mathbb{N}$ fixed, it should be clear that ({\bf T2}) implies that, as $n \rightarrow \infty$, the above (rescaled) Bernoulli process of cuts on $\mathbf{t}_{n}$ (viewed as a rooted metric measure tree) up to time $t$ and restricted to $\mathcal{R}(\mathbf{t}_{n}, \mathbf{V}_{k}^{n})$ converges (in distribution) to the Poisson point process of cuts on the skeleton of $\mathcal{T}_{\alpha}$ with intensity ${\rm d}s \otimes \lambda_{\alpha}({\rm d}v)$ restricted to $[0,t] \times \mathcal{R}(\mathcal{T}_{\alpha}, \mathbf{V}_{k})$. In fact, this convergence holds jointly with that in ({\bf T2}).  For every $t \geq 0$, it follows that
\begin{eqnarray*}
\Pi_{n}^{(t)} |_{[k]} \xrightarrow[ ]{d}  \Pi^{(t)}_{\alpha} |_{[k]}, \hspace*{3mm} \text{as} \hspace*{2mm} n \rightarrow \infty, \hspace*{2mm} \text{for each} \hspace*{2mm} k \in \mathbb{N}, \hspace*{2mm} \text{in the space} \hspace*{2mm} \mathcal{P}_{k}.
\end{eqnarray*}

\noindent Property $({\bf P3})$, Lemma \ref{lemma6}, Corollary \ref{corollary2} and Lemma \ref{lemma7} imply that
\begin{eqnarray*}
\mathbf{F}_{n}^{(\alpha)}(t) \xrightarrow[ ]{d}  \mathbf{F}_{\mathcal{T}_{\alpha}}(t), \hspace*{3mm} \text{as} \hspace*{2mm} n \rightarrow \infty, \hspace*{2mm} \text{in the space} \hspace*{2mm} \mathbb{S}_{\leq 1},
\end{eqnarray*}

\noindent where $\mathbb{S}_{\leq 1}$ is given the topology of pointwise convergence. Since Lemma \ref{lemma6} also shows that $\mathbf{F}_{\mathcal{T}_{\alpha}}(t) \in \mathbb{S}_{1}$ a.s., Lemma \ref{corollary4} entails that the above convergence holds in $(\mathbb{S}, \ell^{1})$. This shows the convergence of the one-dimensional distribution of $\mathbf{F}_{n}^{(\alpha)}$ to $\mathbf{F}_{\mathcal{T}_{\alpha}}$. In general, the same argument can be used to obtain the convergence of the finite-dimensional distributions thanks to the convergence of the (rescaled) Bernoulli process of cuts to the Poisson point process of cuts. 

Finally, Proposition \ref{Theo4} follows from Theorem \ref{Theo3}. 
\end{proof}

\paragraph{Acknowledgements.}
This work is supported by the Knut and Alice Wallenberg
Foundation, a grant from the Swedish Research Council and The Swedish Foundations' starting grant from Ragnar S\"oderbergs Foundation.


\providecommand{\bysame}{\leavevmode\hbox to3em{\hrulefill}\thinspace}
\providecommand{\MR}{\relax\ifhmode\unskip\space\fi MR }
\providecommand{\MRhref}[2]{%
  \href{http://www.ams.org/mathscinet-getitem?mr=#1}{#2}
}
\providecommand{\href}[2]{#2}

\end{document}